\documentclass[12pt]{amsart}

\textwidth=5.5in \textheight=8.5in
\usepackage{cancel}
\usepackage{latexsym, amssymb, amsmath}
\usepackage{soul}
\usepackage{amsfonts, graphicx}
\usepackage{graphicx,color}
\newcommand{\be}{\begin{equation}}
\newcommand{\ee}{\end{equation}}
\newcommand{\beq}{\begin{eqnarray}}
\newcommand{\eeq}{\end{eqnarray}}

\newtheorem{thm}{Theorem}[section]

\newtheorem{lma}{Lemma}[section]

\newtheorem{cor}{Corollary}[section]
\newtheorem{defn}{Definition}[section]
\theoremstyle{remark}

\numberwithin{equation}{section}

\def\dps{\displaystyle}

\def\be{\begin{equation}}
\def\ee{\end{equation}}
\def\bee{\begin{equation*}}
\def\eee{\end{equation*}}

\def\lf{\left}
\def\ri{\right}

\def\avint{\mathop{\ooalign{$\int$\cr$-$}}}
\def\K{K\"ahler }
\def\KR{K\"ahler-Ricci }
\def\Ric{\text{\rm Ric}}
\def\Rm{\text{\rm Rm}}

\def\dbar{\bar\p}
\def\wh{\widehat}
\def\wt{\widetilde}

\def\p{\partial}
\def\ddbar{\partial\bar\partial}

\def\heat{\lf(\frac{\p}{\p t}-\Delta\ri)}
\def\tr{\operatorname{tr}}

\def\e{\epsilon}
\def\a{{\alpha}}

\def\ijb{{i\bar{j}}}
\def\ii{\sqrt{-1}}
\def\R{\mathbb{R}}
\def\C{\mathbb{C}}
\begin{document}

\title[]
{Longtime existence of K\"ahler Ricci flow and holomorphic sectional curvature}

 \author{Shaochuang Huang}
\address[Shaochuang Huang]{Yau Mathematical Sciences Center, Tsinghua University, Beijing, China.}
\email{schuang@mail.tsinghua.edu.cn}

 \author{Man-Chun Lee}
\address[Man-Chun Lee]{Department of
 Mathematics, The Chinese University of Hong Kong, Shatin, Hong Kong, China.}
\email{mclee@math.cuhk.edu.hk}

\author{Luen-Fai Tam$^1$}
\address[Luen-Fai Tam]{The Institute of Mathematical Sciences and Department of
 Mathematics, The Chinese University of Hong Kong, Shatin, Hong Kong, China.}
 \email{lftam@math.cuhk.edu.hk}
\thanks{$^1$Research partially supported by Hong Kong RGC General Research Fund \#CUHK 14301517}

\author{Freid Tong}
\address[Freid Tong]{Department of Mathematics, Columbia University, 2990 Broadway, New York, NY 10027}
 \email{tong@math.columbia.edu}

\renewcommand{\subjclassname}{
  \textup{2010} Mathematics Subject Classification}
\subjclass[2010]{Primary 32Q15; Secondary 53C44
}

\date{May, 2018}

\begin{abstract}
In this work, we obtain a existence criteria for the longtime \KR flow solution. Using the existence result, we generalize a result by Wu-Yau on the existence of \K Einstein metric to the case with possibly unbounded curvature. Moreover, the \K Einstein metric with negative scalar curvture must be unique up to scaling.
\end{abstract}

\keywords{K\"ahler-Ricci flow, K\"ahler manifold, holomorphic sectional curvature, \K Einstein metric}

\maketitle

\markboth{Shaochuang Huang, Man-Chun Lee, Luen-Fai Tam and Freid Tong}{Longtime existence of \KR flow and holomorphic sectional curvature}
\section{introduction}

In this work we will discuss the existence of K\"ahler-Einstein metric on a complete noncompact \K manifold in terms of upper bound of holomorphic sectional curvature. In \cite{WuYau2016}, Wu and Yau proved that if a compact complex manifold supports a \K metric with negative holomorphic sectional curvature, then it also supports a K\"ahler-Einstein metric with negative scalar curvature, under an additional assumption that the manifold is projective. Later Tosatti and Yang \cite{TosattiYang2015} were able to remove the assumption of projectivity. Using \KR flow, Normura \cite{Nomura2017} recovers the result by proving that under the assumption that the holomorphic sectional curvature is bounded above by a negative constant, the metric can be deformed under the normalized \KR flow to a K\"ahler-Einstein metric with negative scalar curvature.
In case that the holomorphic sectional curvature is quasi-negative, namely it is nonpositive and is negative somewhere, Diverio-Trapani \cite{DiverioTrapani2016} and Wu-Yau \cite{WuYau2016-2} then the canonical bundle is ample.

For the noncompact case,   it was proved by Wu and Yau \cite{WuYau2017} if a noncompact complex manifold supports a \K metric with holomorphic sectional curvature bounded between two negative constants, then it also supports a K\"ahler-Einstein metric with negative scalar curvature. It is well-known that if the holomorphic sectional curvature is bounded then the curvature is bounded. Hence one may consider using \KR flow to obtain the same result. This has been done successfully by the fourth author of this work.

In this paper, we first will give a rather general condition for a normalized \KR flow to converge to a K\"ahler-Einstein metric.  We prove the following:

\begin{thm}\label{t-intro-1}
Suppose there is a complete noncompact Hermitian metric $h$ on a complex manifold $M^n$ compatible with the complex structure $J$ such that
\begin{align}\label{Hneg}
H_h+\frac{2n}{n+1}|\wh\nabla_{\dbar} \wh T|_h\leq -k
\end{align}
for some $k>0$. Then any longtime solution of normalized \KR flow $g(t)$ will converge uniformly in $C^\infty$ to a \K Einstein metric  $g_\infty=-\Ric(g_\infty)$. {In particular, there is no Ricci flat \K metric on $M$ compatible with the same complex structure $J$.}
\end{thm}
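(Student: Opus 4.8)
The plan is to use the normalized Kähler-Ricci flow $\partial_t g(t) = -\Ric(g(t)) - g(t)$ starting from an arbitrary complete Kähler metric, and to exploit the hypothesis \eqref{Hneg} to produce time-independent barriers that force convergence. The key analytic input is that condition \eqref{Hneg} on the fixed background Hermitian metric $h$ — relating the holomorphic sectional curvature $H_h$ to the Chern torsion term $|\wh\nabla_{\dbar}\wh T|_h$ with the sharp constant $\tfrac{2n}{n+1}$ — is exactly what is needed to run a Wu-Yau/Royden type Schwarz lemma computation at the level of the trace $\operatorname{tr}_{g(t)} h$. So the first step would be to derive an evolution inequality of Aubin-Yau type for $u = \operatorname{tr}_{g(t)} h$ along the flow, showing $\heat \log u \leq C_1 u - C_2$ for constants depending only on $k$ and $n$; the torsion term in \eqref{Hneg} is precisely the correction that makes this inequality valid in the Hermitian (non-Kähler background) setting.

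Next I would establish a uniform two-sided bound $C^{-1} h \leq g(t) \leq C h$ on $M \times [0,\infty)$, independent of $t$. The upper bound comes from applying a maximum principle (in the complete noncompact setting, via Omori-Yau together with a cutoff/barrier argument, which is legitimate since the flow solution is assumed to exist for all time — though one must be careful about what curvature bounds on $g(t)$ are available a priori) to the quantity $\operatorname{tr}_{g(t)} h$ using the evolution inequality above; the normalization term $-g$ in the flow supplies the crucial zeroth-order negative term $-C_2$ that absorbs the reaction and yields a bound uniform in time. The lower bound on $g(t)$ follows symmetrically by estimating $\operatorname{tr}_h g(t)$, combined with control on the volume form $\det g(t)/\det h$ coming from the flow equation $\partial_t \log \det g(t) = -\operatorname{tr}_{g(t)}\Ric(g(t)) - n$ and the uniform scalar curvature / trace bounds. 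Once $g(t)$ is uniformly equivalent to $h$, one gets uniform higher-order estimates by the standard local parabolic Schauder/Shi-type estimates for Kähler-Ricci flow, so that a subsequence $g(t_j)$ converges in $C^\infty_{loc}$.

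Then I would upgrade subsequential convergence to convergence of the whole flow and identify the limit. Consider $Q(t) = \operatorname{tr}_{g(t)} \Ric(g(t)) + n = -\partial_t \log \det g(t)$, or more robustly the quantity $\log\det g(t)$ itself, and show via the evolution equation and the maximum principle that $g(t) + \Ric(g(t))$ decays; monotonicity of an energy-type functional or a Harnack-type argument along the flow gives that $g_\infty := \lim g(t)$ exists and satisfies $\Ric(g_\infty) = -g_\infty$, i.e. $g_\infty$ is Kähler-Einstein with negative scalar curvature. Finally, for the last sentence: if $M$ admitted a Ricci-flat complete Kähler metric $g_0$ compatible with $J$, one could take it (after suitable setup) as initial data; the flow would then have to converge to a negative Kähler-Einstein metric, but a Ricci-flat metric is a fixed point of the \emph{unnormalized} flow, and under the normalized flow $g(t) = e^{-t} g_0$, which does not converge to a nonzero limit — contradicting the uniform lower bound $g(t) \geq C^{-1} h > 0$. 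Alternatively, a Ricci-flat Kähler metric has $H \equiv 0$ at each point in some direction, which is incompatible with coexistence of a metric satisfying the strict negativity \eqref{Hneg} once one notes both constraints pass to cohomological/Chern-class obstructions (e.g. $c_1$ cannot be both zero and negative in the relevant sense).

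The main obstacle I expect is the maximum principle step on the complete noncompact manifold: a priori the flow $g(t)$ need not have bounded curvature (indeed the whole point of the theorem is to allow unbounded curvature), so one cannot directly invoke the standard maximum principle. The fix should be to combine the Omori-Yau maximum principle on $(M,h)$ — valid because $h$ has curvature controlled by \eqref{Hneg}, hence bounded below — with the structure of the evolution inequality, using that the "bad" terms are controlled by $\operatorname{tr}_{g(t)} h$ itself and the good normalization term $-C_2$ dominates at large values. Making this rigorous, possibly via a well-chosen exhaustion function or a perturbation of $\operatorname{tr}_{g(t)} h$ by $\epsilon \rho$ for a proper function $\rho$ on $M$, is the technical heart of the argument.
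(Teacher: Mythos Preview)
Your overall architecture --- Schwarz/Royden computation for $\operatorname{tr}_{g(t)}h$, uniform equivalence with $h$, local higher-order estimates, then convergence via a potential/monotonicity argument, and the Ricci-flat contradiction --- matches the paper. But the step you yourself flag as the main obstacle is a genuine gap, and neither of your proposed fixes works under the stated hypotheses.

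You propose to run Omori--Yau on $(M,h)$, asserting that $h$ has curvature ``bounded below'' because of \eqref{Hneg}. It does not: \eqref{Hneg} is purely an \emph{upper} bound on $H_h$ (plus a torsion correction), and gives no lower bound on sectional or Ricci curvature of $h$. Likewise your fallback of perturbing by $\epsilon\rho$ requires an exhaustion function with controlled gradient and complex Hessian with respect to $h$; no such $\rho$ is assumed in this theorem, and the paper explicitly remarks that ``we do not have a good exhaustion function for $h$'' here. So the maximum principle you need cannot be extracted from $h$.

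The paper's replacement is to localize using the evolving metric $g(t)$ instead of $h$: with $g_0$ K\"ahler the trace evolution from Royden's lemma is actually the quadratic inequality $\heat\Lambda\le -c_1k\,\Lambda^2$ (not the linear $\heat\log u\le C_1u-C_2$ you wrote), and this is fed into a Chen/Perelman-type localized maximum principle (Lemma~\ref{l-Chen}) that uses $d_{g(t)}(\cdot,x_0)$ together with Perelman's distance-Laplacian bound in a small ball. This needs no a priori curvature bound on $h$ or on $g(t)$ and yields $\operatorname{tr}_{g(t)}h\le C/t$ directly. The same lemma applied to (a smoothing of) $R_-$ gives $tR\ge -n$, and these two together produce the two-sided equivalence $C^{-1}h\le \widetilde g(s)\le Ch$ on each bounded set, after which the higher-order and convergence arguments proceed essentially as you outlined. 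Your first argument for the Ricci-flat consequence is the one the paper uses; the cohomological alternative you mention is not available in the noncompact setting.
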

Here $\wh\nabla$ is the derivative with respect to the Chern connection of $h$. See \cite{TosattiWeinkove2015} for more details on the Chern connection, its torsion and curvature.
Note that the K\"ahler-Einstein metric is unique, see \cite[p.30]{WuYau2017} for example.

By the theorem, the question is to obtain longtime solution to the \KR flow. We have:

\begin{thm}\label{t-intro-2}
Let $(M^n,g_0)$ be a complete \K manifold and $h$ is a fixed complete Hermitian metric on $M$ such that the following holds.
\begin{enumerate}
\item[  (i)] There exists smooth exhaustion $\rho\geq 1$ such that
$$\limsup_{\rho\rightarrow \infty}\left(\frac{|\partial \rho|_h}{\rho}+\frac{|\sqrt{-1}\partial\bar\partial \rho|_h}{\rho} \right)=0;$$
\item[\bf (ii)]  and the holomorphic sectional curvature of $h$ and torsion of $h$ satisfy
$$H_h+\frac{2n}{n+1}|\wh\nabla_{\bar\partial} \wh T|_h\leq -k $$ with $k\ge0$
\item[ (iii)]  $\exists \a>1$ such that on $M$,
$\a^{-1}g_0\leq h\leq \a g_0$, $|\wh T|_h\leq \a$;
\item [(iv)]
$\limsup_{\rho\rightarrow \infty}\rho^{-1}|\wh\nabla g_0|_h =0;$

\end{enumerate}
Then there is $\beta(n,\a)>0$ such that the \K Ricci flow has a complete solution $g(t)$ on $M\times [0,+\infty)$ with $g(0)=g_0$ and satisfies
 $$\beta h\leq g(t)$$ on $M\times [0,+\infty)$.
\end{thm}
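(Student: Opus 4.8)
The strategy is to establish short-time existence and then a uniform lower bound on $g(t)$ that persists up to any finite time, allowing continuation to $[0,\infty)$. First I would invoke the Shi-type short-time existence results for the Kähler–Ricci flow on complete noncompact manifolds. Since conditions (iii) and (iv) give $g_0$ uniformly equivalent to the fixed Hermitian metric $h$ and control $\wh\nabla g_0$ at infinity, one does not a priori have bounded curvature of $g_0$; the idea is therefore to run a well-chosen \emph{approximation}: exhaust $M$ by relatively compact domains $\Omega_j$, solve the Kähler–Ricci flow with suitable (e.g. fixed or Dirichlet) boundary behavior on each $\Omega_j$, derive estimates independent of $j$, and pass to the limit. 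Alternatively, if $g_0$ can be shown to have bounded curvature from (iii)–(iv) plus a smoothing, one uses Shi's theorem directly. Either way, short-time existence of a complete solution $g(t)$ on some $M\times[0,T)$ is the starting point.

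The heart of the argument is a \emph{lower bound} $g(t)\geq \beta h$ with $\beta=\beta(n,\a)$ independent of $t$, proved by a maximum principle applied to the quantity $\operatorname{tr}_{g(t)} h$ (or its reciprocal). Following the Aubin–Yau / Cheng–Yau style computation in the Kähler–Ricci flow setting, one computes the evolution inequality
\begin{equation}
\heat \log \operatorname{tr}_{g} h \leq C(n)\,|\wh T|_h^2 + \big(\text{curvature terms of } h\big),
\end{equation}
where the curvature terms are controlled from above precisely by the holomorphic sectional curvature bound and the torsion-derivative bound in hypothesis (ii): the combination $H_h+\tfrac{2n}{n+1}|\wh\nabla_{\dbar}\wh T|_h\leq -k\leq 0$ is exactly what makes the dangerous term have a favorable sign (this is the same miraculous identity used by Wu–Yau and Nomura, traced back to the Royden-type polarization of holomorphic sectional curvature). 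Combined with $|\wh T|_h\leq \a$ from (iii), one gets a differential inequality of the form $\heat u \leq C_1 - C_2 u$ for $u=\operatorname{tr}_g h$ minus lower-order terms, which by the maximum principle yields $\operatorname{tr}_g h \leq C(n,\a)$, hence $g(t)\geq \beta h$.

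To run the maximum principle on a noncompact manifold one needs a cutoff/barrier, and this is where hypothesis (i) enters: the exhaustion function $\rho$ with $|\p\rho|_h/\rho\to 0$ and $|\ddbar\rho|_h/\rho\to 0$ serves as a generalized maximum principle device (a Omori–Yau type barrier adapted to the Chern geometry of $h$), letting one localize the computation and absorb error terms into the $-C_2 u$ term. Hypothesis (iv), $\rho^{-1}|\wh\nabla g_0|_h\to 0$, is what guarantees the error terms arising when the barrier interacts with the initial data stay bounded. The main obstacle I anticipate is making this localized maximum-principle argument fully rigorous in the noncompact, possibly-unbounded-curvature setting: one must ensure $\operatorname{tr}_g h$ and its evolution are controlled enough near spatial infinity for the barrier argument to apply, and that the constants in the evolution inequality genuinely depend only on $n$ and $\a$ and not on $t$ or on the (uncontrolled) curvature of $g_0$. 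Once the uniform lower bound $g(t)\geq\beta h$ is in hand, it prevents the metric from degenerating, and standard parabolic estimates (local higher-order estimates in the region where the flow stays uniformly equivalent to $h$) let one continue the solution past any finite $T$, giving a complete solution on $M\times[0,\infty)$ with the asserted bound.
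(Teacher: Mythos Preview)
Your overall architecture---evolve $\operatorname{tr}_g h$, use a Royden-type polarization to make hypothesis~(ii) give the right sign on the curvature term, apply a maximum principle with the exhaustion function from~(i), then continue---matches the paper's. But two points need correction.

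\medskip
\textbf{The approximation scheme.} Your short-time existence step is the real gap. Neither of your alternatives works here: Shi's theorem needs bounded curvature of $g_0$, which nothing in (i)--(iv) provides, and ``Dirichlet boundary conditions'' for the K\"ahler--Ricci flow on an exhausting sequence of domains is not a standard construction. The paper's actual mechanism is quite specific and is where most of the work lies: on each sublevel set $U_{\rho_i}=\{\rho<\rho_i\}$ one defines $g_{0,i}=e^{2F^{(i)}}g_0$ with $F^{(i)}=\mathfrak{F}(\rho/\rho_i)$ blowing up at the boundary, so that $(U_{\rho_i},g_{0,i})$ is \emph{complete with bounded geometry of infinite order}. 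One then runs the \emph{Chern--Ricci} flow (not the K\"ahler--Ricci flow---the conformal factor destroys K\"ahlerness) from $g_{0,i}$, and the a~priori estimates for $\operatorname{tr}_{g_i}h_i$ (with $h_i=e^{2F^{(i)}}h$) are proved for this approximating flow. One takes $i\to\infty$, using a local lower bound on the Chern scalar curvature plus the $\operatorname{tr}_g h$ bound to get local uniform equivalence, then local higher-order estimates, then a diagonal limit.

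\medskip
\textbf{The role of hypothesis (iv) and the form of the inequality.} Hypothesis~(iv) is \emph{not} used as a barrier term in the maximum principle; it is used to control the torsion derivative $|\wh\nabla^{(i)}_{\bar\partial} T_{0,i}|_{h_i}$ of the conformally perturbed approximants (the computation produces a term $e^{2F}\wh\nabla_{\bar l}\tilde g_{k\bar q}$, controlled by $|\wh\nabla g_0|_h$). Likewise hypothesis~(i) is used both for the maximum principle \emph{and} to control how the conformal change affects curvature and torsion. Finally, the evolution inequality is not of the linear form $\heat u\le C_1-C_2 u$ that you wrote: for the approximants one gets $\heat\Lambda\le c(n)(\Lambda+1)^4\mathfrak{s}$ and compares with the explicit ODE solution $v(t)=((n\alpha+1)^{-3}-3c\,\mathfrak{s}\,t)^{-1/3}$. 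The point of the $\limsup$ hypotheses is that for K\"ahler $g_0$ (so $T_0=0$) and $\kappa_0\le 0$, the constant $\mathfrak{s}$ can be made arbitrarily small by taking $\rho_i$ large, so the existence time $S\sim\mathfrak{s}^{-1}\to\infty$ while the bound on $\operatorname{tr}_g h$ stays of size $c(n,\alpha)$.
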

It is known that if $M$ has bounded curvature, then it will support an exhaustion function $\rho$ with bounded gradient and Hessian \cite{Shi1989a,Tam2010}. Hence  $h$ is uniformly equivalent to a complete manifold with bounded curvature then condition (i) in the theorem is also satisfied. See also a recent result in  \cite{Huang2018}. Hence condition (i) is more general than the condition that the curvature is bounded.

Combining Theorems \ref{t-intro-1} and \ref{t-intro-2}, we conclude that if $(M^n,g_0)$ is a complete \K metric, then the normalized \KR flow will converge to the K\"ahler-Einstein metric with negative scalar curvature in the following cases:

\begin{itemize}
  \item [(a)] The holomorphic sectional curvature is bounded above by $-k$ for some $k>0$ and $g_0$ support an exhaustion function with bounded gradient and bounded complex Hessian.
  \item [(b)] There exists  a complete Hermitian metric $h$ so that $g_0, h$ satisfy the conditions in Theorem \ref{t-intro-2} with $k>0$.
  \item [(c)] $g_0$ satisfy a Sobolev inequality and the curvature is bounded in some $L^p$ sense so that the holomorphic sectional curvature is bounded above by $-k$ for some $k>0$. (See more precise statement in Corollary \ref{c-Xu}.)
\end{itemize}
In case $g_0$ has bounded curvature so that the holomorphic sectional curvature is bounded above by $-k<0$ for some constant $k$, then the conditions in (c) will also be satisfied. (a)--(c) are some generalizations to Wu-Yau's result \cite{WuYau2017}.

{\it Acknowledgement}: The second author would like to thank Professor Fangyang Zheng for answering his question. Part of the works was done when the second author visited the Yau Mathematical Sciences Center of Tsinghua University in Beijing, which he would like to thank for the hospitality.

\section{Short time existence lemma}\label{s-shorttime}
As a approximation, it is more convenient to consider the case when $g_0$ is only Hermitian metric but not necessary K\"ahler. Let $(M^n,g_0)$ be a complete noncompact Hermitian manifold with complex dimension $n$. In the following, connection and  curvature will be refered to the Chern connection and curvature with respect to the Chern connection. When the torsion vanishes, it coincides with the Levi-Civita connection. {For basic facts on Chern connection and curvature of Hermitian manifolds see \cite{TosattiWeinkove2015} for example}.  In this section, we want to discuss the existence of the Chern-Ricci flow:
\be\label{e-CRflow-1}
\left\{
  \begin{array}{ll}
    \frac{\p}{\p t}g_\ijb=  &  -R_\ijb; \\
    g(0)=  &g_0.
  \end{array}
\right.
\ee
where $R_\ijb=-\p_i\p_{\bar j}\log \det (g(t))$ is the Chern-Ricci curvature of $g(t)$.
This equation is equivalent to the following parabolic complex Monge-Amp\`ere equation:
\be\label{e-MA-1}
\left\{
  \begin{array}{ll}
    \frac{\p}{\p t}\psi=  &  \dps{\log \frac{(\omega_0-t\Ric(\omega_0)+\ii\p\bar\p \psi)^n}{\omega_0^n} }; \\
    \psi(0)=  &0.
  \end{array}
\right.
\ee
More precisely, if $g(t)$ is a solution to \eqref{e-CRflow-1}, let
\be\label{e-psi}
\psi(x,t)=\int_0^t\log \lf(\frac {\omega^n(x,s)}{\omega_0^n(x)}\ri)ds
\ee
where $\omega(t)$ and $\omega_0$ are the associated (1,1) forms of $g(t)$, $g_0$ respectively. Then $\psi$ satisfies \eqref{e-MA-1}. One can see that
  $\omega(t)= \omega_0-t\Ric(\omega_0)+\ii\p\bar\p \psi.$ Conversely, if $\psi$ is a smooth solution to \eqref{e-MA-1} so that $\omega_0-t\Ric(\omega_0)+\ii\p\bar\p \psi>0$, then $\omega(t)$ defined by the above relation satisfies \eqref{e-CRflow-1}. We will say that $\psi$ is the solution of \eqref{e-MA-1} corresponding to the solution $g(t)$ of \eqref{e-CRflow-1}.

In \cite{LeeTam2017}, it has been shown that when $(M,g_0)$ has bounded curvature of infinity order, the Monge-Amp\`ere equation \eqref{e-MA-1} and hence the Chern-Ricci flow equation \eqref{e-CRflow-1} has a short time solution on $M$.
\begin{lma}[see \cite{ChauTam2011,LeeTam2017}]\label{l-shortime-1} Let $(M^n,g_0)$ be a complete noncompact Hermitian metric. Suppose $g_0$ has bounded geometry of infinite order, then \eqref{e-CRflow-1} has a solution $g(t)$ on $M\times[0,S]$ for some $S>0$ and there is a constant $C>0$ such that
$C^{-1}g_0\le g(t)\le Cg_0$.
\end{lma}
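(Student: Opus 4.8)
The plan is to pass to the equivalent parabolic complex Monge-Amp\`ere equation \eqref{e-MA-1} and to solve it directly on $M$ by an inverse function theorem argument in parabolic H\"older spaces adapted to the geometry of $g_0$. Since $g_0$ has bounded geometry of infinite order, there is a uniform atlas of holomorphic (quasi-)coordinate charts in which $g_0$ is uniformly equivalent to the Euclidean metric and all covariant derivatives of $g_0$ (equivalently, all ordinary derivatives of $g_0$ in these charts) are uniformly bounded; in particular $\Ric(\omega_0)=-\ii\ddbar\log\det g_0$ and all its derivatives are uniformly bounded. Using these charts one defines the parabolic H\"older spaces $C^{k+\a,(k+\a)/2}(M\times[0,S])$ by measuring norms chart-by-chart, and the standard linear parabolic Schauder theory holds on $M\times[0,S]$ with constants independent of the point; in particular the heat operator $\heat$, with $\Delta$ the Chern Laplacian of $g_0$, is an isomorphism from $\{\psi\in C^{k+2+\a,(k+2+\a)/2}:\ \psi(\cdot,0)=0\}$ onto $C^{k+\a,(k+\a)/2}$, with bounds depending only on the geometry of $g_0$ (and on $S\le 1$, say).

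Next, set
$$\Phi(\psi)=\frac{\p}{\p t}\psi-\log\frac{(\omega_0-t\Ric(\omega_0)+\ii\ddbar\psi)^n}{\omega_0^n},$$
defined for $\psi$ in a small ball $B_\delta$ of the parabolic H\"older space with $\psi(\cdot,0)=0$; for $\delta$ and $S$ small the argument of the logarithm is positive, and solutions of \eqref{e-MA-1} on $M\times[0,S]$ correspond exactly to zeros of $\Phi$. One checks that $\Phi$ is a $C^1$ map between these H\"older spaces with locally Lipschitz derivative (again using the uniform bounds on $g_0$ and $\Ric(\omega_0)$), and that $D\Phi(0)=\heat$, which is invertible by the previous step. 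The ``error'' $\Phi(0)=-\log\big((\omega_0-t\Ric(\omega_0))^n/\omega_0^n\big)$ vanishes at $t=0$ and has parabolic H\"older norm on $M\times[0,S]$ bounded by $C(g_0)\, S$. Hence, by the quantitative inverse function theorem (Newton iteration), for $S=S(g_0)>0$ small enough there is $\psi\in B_\delta$ with $\Phi(\psi)=0$, and $\|\ddbar\psi\|$ on $M\times[0,S]$ can be made as small as we wish by shrinking $S$.

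Finally, since $\ii\ddbar\psi$ is uniformly small and $t\Ric(\omega_0)$ is uniformly small on $[0,S]$, the form $\omega(t)=\omega_0-t\Ric(\omega_0)+\ii\ddbar\psi$ is positive and uniformly equivalent to $\omega_0$, which gives $C^{-1}g_0\le g(t)\le Cg_0$ and, in particular, uniform parabolicity; smoothness of $g(t)$ and the fact that it solves \eqref{e-CRflow-1} then follow by differentiating the equation and bootstrapping with the linear Schauder theory. An alternative, essentially equivalent route is to exhaust $M$ by smooth relatively compact domains $\Omega_j$, solve the Dirichlet problem for \eqref{e-MA-1} on $\Omega_j\times[0,S]$ with zero lateral data, prove a uniform $C^0$ bound for $\psi$ and $\p_t\psi$ followed by a uniform second-order estimate $C^{-1}g_0\le\omega(t)\le Cg_0$ by an Aubin--Yau type maximum principle argument, deduce uniform interior higher-order estimates via Evans--Krylov and parabolic Schauder, and pass to a limit along a diagonal subsequence. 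In either approach the crux --- and the only place where bounded geometry of \emph{infinite} order is genuinely needed --- is the uniform-in-scale control (invertibility of $\heat$ with uniform constants, respectively the uniform second-order estimate) that yields a common existence time $S$ and a common equivalence constant $C$; once this is in hand the remaining regularity is routine parabolic bootstrapping.
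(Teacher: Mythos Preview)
The paper does not give its own proof of this lemma; it is simply stated as a known result with a citation to \cite{ChauTam2011,LeeTam2017}. Your sketch is correct and is essentially the method of those references: the inverse function theorem argument for \eqref{e-MA-1} in parabolic H\"older spaces built from the uniform holomorphic atlas is precisely the approach of \cite{ChauTam2011}, while the exhaustion-by-domains alternative you outline is closer in spirit to \cite{LeeTam2017}. One small inaccuracy: the linearization $D\Phi(0)$ is $\p_t-\Delta_t$ with $\Delta_t$ taken with respect to $\omega_0-t\Ric(\omega_0)$, not literally the $g_0$-heat operator; but for small $S$ this is a small perturbation of $\heat$ and the isomorphism statement survives, so the argument is unaffected.
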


\section{A-priori estimate for the Chern Ricci flow}\label{apesti}
Let $(M_n,g)$ be a Hermitian manifold. Under a local holomorphic coordinate
system $(z_1, . . . , z_n)$, the torsion tensor of $g$ is defined by
$$T_{ij\bar l}= \partial_i g_{j\bar l}-\partial_j g_{i\bar l}.$$
{Let  $T_{ij}^k=g^{k\bar l}T_{ij \bar l}$, then $T_{ij}^k=\Gamma_{ij}^k-\Gamma_{ji}^k$ where $\Gamma_{ij}^k$ is the Chern connection. $T_{ij}^k$ is usually called the torsion. Here we use $\Gamma_{ijk}$ to denote the  torsion. The advantage is that it is invariant under the Chern-Ricci flow. }   The curvature tensor of the Chern connection has components
$$R_{i\bar j k\bar l}=-\frac{\partial^2g_{k\bar l}}{\partial z_i\partial \bar  z_j}+g^{q\bar p} \frac{\partial g_{k\bar p}}{\partial z_i}\frac{\partial g_{q\bar l}}{\partial \bar z_j}.$$

If the torsion tensor $T= 0$, then $g$ is K\"ahler. It can be checked easily that for $X,Y\in T^{1,0}M$, $R(X,\bar X, Y,\bar Y)$ is real-valued. We introduce the following curvature condition.
\begin{defn}
We say that $(M,g)$ has holomorphic sectional curvature bounded above by $\kappa$ if for any $p\in M$, $X\in T^{1,0}_pM$,
$$R(X,\bar X,X,\bar X)\leq \kappa |X|^4.$$
For notational convenience, we denote it to be $H_g\leq \kappa$.
\end{defn}

Let $g(t)$ be a solution of the Chern-Ricci flow with initial metric $g(0)=g_0$ and $h$ is another Hermitian metric on $M$. Now we wish to obtain some a priori estimates for $g(t)$. First we list some evolution equations which is related to the Chern-Ricci flow.

\begin{lma}\label{l-tracee}
The evolution equation of $\Lambda=tr_gh =g^{i\bar j }h_{i\bar j}$ is given by
\begin{equation}
\begin{split}
\heat \Lambda &= (I)+(II)+(III).
\end{split}
\end{equation}
where
\begin{align*}
(I)&=- h_{k\bar l}g^{i\bar j} g^{p\bar q} \Psi_{pi}^k \overline{\Psi_{qj}^l} +2Re\left[g^{i\bar j}g^{k\bar l} g^{p\bar q} h_{k\bar j}\Psi_{\bar l \bar q}^{\bar s}(T_0)_{ip\bar s}\right];\\
(II)&=g^{l\bar k}g^{j\bar i} g^{q\bar p} h_{j\bar k}(T_0)_{\bar i\bar p r}\left[ \hat T_{ql \bar s} h^{r \bar s}-(T_0)_{l q \bar s}g^{r\bar s}\right]\\
&\quad +g^{i\bar j}g^{k\bar l} g^{p\bar q} h_{k\bar j}\left[\hat\nabla_p (T_0)_{\bar q\bar l i}+\hat\nabla_{\bar l} (T_0)_{pi\bar q}\right];\\
(III)&=g^{i\bar j} g^{p\bar q} \hat R_{p\bar q i\bar j}.
\end{align*}
In particular, $(I)\leq h_{p\bar r} h_{c\bar q}h^{k\bar a}g^{s\bar r} g^{c\bar d} g^{i\bar j} g^{p\bar q}(T_0)_{si\bar a}(T_0)_{\bar d \bar j k}$. Moreover, we have the following evolution equation of the quantity $\log tr_gh$.

\begin{align}\label{l-trace}
 \heat \log \Lambda
&= (IV)+\Lambda^{-1}\Big[(II)+(III) \Big]
\end{align}
where
\begin{align*}
(IV)&\leq \Lambda^{-1}h_{p\bar r} h_{c\bar q}h^{k\bar a}g^{s\bar r} g^{c\bar d} g^{i\bar j} g^{p\bar q}(T_0)_{si\bar a}(T_0)_{\bar d \bar j k}\\
&\quad +2\Lambda^{-2}Re\Big[h_{p\bar r} g^{a\bar r} g^{i\bar l} g^{p\bar q}  (T_0)_{ai\bar l} \partial_{\bar q} \Lambda\Big].
\end{align*}

Here $\Delta F=g^{i\bar j}\partial_i \partial_{\bar j}F$ for function $F\in C^2(M)$. $T_0$ and $\hat T$ are the torsion of metric $g_0$ and $h$ respectively.
\end{lma}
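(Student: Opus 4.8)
The plan is to compute $\partial_t\Lambda$ and the spatial Laplacian $\Delta\Lambda$ separately and subtract. Differentiating $g^{i\bar j}g_{j\bar k}=\delta^i_k$ along \eqref{e-CRflow-1} gives $\partial_t g^{i\bar j}=g^{i\bar q}g^{p\bar j}R_{p\bar q}=:R^{i\bar j}$, so $\partial_t\Lambda=h_{i\bar j}R^{i\bar j}$ since $h$ is fixed. For the Laplacian the key device is to run all derivatives through the Chern connection $\hat\nabla$ of $h$: since $\hat\nabla h=0$ and the mixed Chern Christoffel symbols vanish, one checks $\partial_k\partial_{\bar l}\Lambda=h_{i\bar j}\hat\nabla_k\hat\nabla_{\bar l}g^{i\bar j}$, hence $\Delta\Lambda=g^{k\bar l}h_{i\bar j}\hat\nabla_k\hat\nabla_{\bar l}g^{i\bar j}$.

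The heart of the matter is expanding $\hat\nabla_k\hat\nabla_{\bar l}g^{i\bar j}$. Introduce the relative connection tensor $\Psi=\Gamma-\hat\Gamma$ (the difference of the two Chern Christoffel symbols); metric compatibility yields $\hat\nabla_k g_{p\bar q}=\Psi^s_{kp}g_{s\bar q}$ and hence $\hat\nabla_{\bar l}g^{i\bar j}=-g^{i\bar q}\overline{\Psi^j_{lq}}$. Differentiating once more, and using that the $\dbar$-derivative of a Chern Christoffel symbol is (minus) a component of the corresponding Chern curvature, one gets schematically
\[
\hat\nabla_k\hat\nabla_{\bar l}g^{i\bar j}=(\text{quadratic in }\Psi)+(\text{a contraction of the Chern curvature of }g)-(\text{a contraction of the Chern curvature of }h),
\]
and the last summand, contracted with $g^{k\bar l}h_{i\bar j}$, is exactly $-(III)$. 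Subtracting from $\partial_t\Lambda=h_{i\bar j}R^{i\bar j}$, the Chern curvature of $g$ cancels \emph{modulo torsion of $g$}, where one moves between the two contractions of the Chern curvature of $g$ by the first Bianchi-type identities expressing $R_{i\bar jk\bar l}-R_{k\bar ji\bar l}$ and $R_{i\bar jk\bar l}-R_{i\bar lk\bar j}$ as $g$-covariant derivatives of the torsion of $g$; the term $(III)$ survives with the correct sign; and the $\Psi$-quadratic term is the manifestly nonpositive leading term of $(I)$. It remains to collect the leftover torsion terms. Since the Chern--Ricci flow preserves the torsion $T_{ij\bar l}$ — indeed $\partial_tT_{ij\bar l}=-\partial_iR_{j\bar l}+\partial_jR_{i\bar l}=0$ because $R_{j\bar l}=-\partial_j\partial_{\bar l}\log\det g$ — all of them may be written with the fixed tensor $(T_0)$; converting $g$-covariant derivatives of $T_0$ into $h$-covariant ones costs extra $\Psi\cdot T_0$ terms, and splitting $\Psi$ into its symmetric part and its torsion part (the latter being $T_0-\hat T$, which is where the torsion $\hat T$ of $h$ enters) distributes these between $(II)$ and the cross term $2\,\mathrm{Re}[\cdots]$ of $(I)$. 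Regrouping gives the claimed identity.

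The bound $(I)\le h_{p\bar r}h_{c\bar q}h^{k\bar a}g^{s\bar r}g^{c\bar d}g^{i\bar j}g^{p\bar q}(T_0)_{si\bar a}(T_0)_{\bar d\bar jk}$ is then completion of the square: $(I)$ has the form $-\|\Psi\|^2+2\,\mathrm{Re}\langle\Psi,\mathcal T\rangle=-\|\Psi-\mathcal T\|^2+\|\mathcal T\|^2$, with $\mathcal T$ a fixed contraction of $T_0$ against the two metrics and $\|\cdot\|$, $\langle\cdot,\cdot\rangle$ the natural $h\otimes g\otimes g$ inner product, so $(I)\le\|\mathcal T\|^2$, which is the displayed expression. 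For $\log\Lambda$ one uses $\heat\log\Lambda=\Lambda^{-1}\heat\Lambda+\Lambda^{-2}|\partial\Lambda|^2_g$, whence $(IV)=\Lambda^{-1}(I)+\Lambda^{-2}|\partial\Lambda|^2_g$; since $\partial_{\bar q}\Lambda=h_{i\bar j}\hat\nabla_{\bar q}g^{i\bar j}$ is a linear trace $L(\Psi)$ of $\Psi$ and $\mathrm{tr}(g^{-1}h)=\Lambda$, one completes the square in $(I)$ as above and writes $|\partial\Lambda|^2_g=|L(\Psi)|^2=|L(\Psi-\mathcal T)|^2+2\,\mathrm{Re}\langle L(\mathcal T),L(\Psi)\rangle+|L(\mathcal T)|^2$; the fluctuation $\Lambda^{-2}|L(\Psi-\mathcal T)|^2$ is absorbed by the residual $-\Lambda^{-1}\|\Psi-\mathcal T\|^2$ via Cauchy--Schwarz, leaving $\Lambda^{-1}\|\mathcal T\|^2$ together with the cross term $2\Lambda^{-2}\mathrm{Re}[h_{p\bar r}g^{a\bar r}g^{i\bar l}g^{p\bar q}(T_0)_{ai\bar l}\partial_{\bar q}\Lambda]$, which is the asserted bound for $(IV)$, while $\Lambda^{-1}[(II)+(III)]$ is carried along unchanged.

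I expect the main obstacle to be organizational rather than conceptual. Because neither $g$ nor $h$ is K\"ahler, every commutation of covariant derivatives, every use of the Bianchi identities for the Chern curvature, and every reordering of the non-symmetric Chern Christoffel symbols leaks torsion terms; the real work is in carefully tracking the torsion $T_0$ of $g$ (genuinely $t$-independent along the flow) together with the torsion $\hat T$ of $h$ and its covariant derivative $\hat\nabla_{\dbar}\hat T$, and verifying that they coalesce into $(II)$ and the cross term of $(I)$ with exactly the stated coefficients. A secondary technical point is the choice of which Chern--Ricci trace of $g$ to use, since that — together with the torsion-corrected Bianchi identities — is what makes the Chern curvature of $g$ disappear from $\heat\Lambda$.
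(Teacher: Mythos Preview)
Your approach is correct and essentially the same as the paper's: compute $\partial_t\Lambda$ and $\Delta\Lambda$ via the connection difference $\Psi$, use the torsion-corrected Bianchi identity to cancel the Chern curvature of $g$, convert $\nabla T_0$ to $\hat\nabla T_0$, and complete the square. Two minor organizational differences are worth noting. First, the paper computes $\Delta\Lambda=g^{p\bar q}g^{i\bar j}\nabla_{\bar q}\nabla_p h_{i\bar j}$ using the Chern connection $\nabla$ of $g$ acting on $h$ (with $\Psi=\hat\Gamma-\Gamma$), rather than your dual choice of $\hat\nabla$ acting on $g^{-1}$; the two routes give the same identity. Second, for the bound on $(IV)$ the paper completes the square in a single step by expanding the nonnegative quantity
\[
K=h_{k\bar l}\,g^{i\bar j}g^{p\bar q}\Bigl(\Psi^k_{pi}-\tfrac{\delta^k_i}{\Lambda}\partial_p\Lambda+C^k_{pi}\Bigr)\overline{\Bigl(\Psi^l_{qj}-\tfrac{\delta^l_j}{\Lambda}\partial_q\Lambda+C^l_{qj}\Bigr)}
\]
with $C=-\mathcal T$, which packages your two-step Cauchy--Schwarz absorption into one identity. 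This is worth adopting: your displayed expansion $|L(\Psi)|^2=|L(\Psi-\mathcal T)|^2+2\,\mathrm{Re}\langle L(\mathcal T),L(\Psi)\rangle+|L(\mathcal T)|^2$ is off by $2|L(\mathcal T)|^2$, and while the corrected version still yields the claimed bound (via $|L(A)|_g^2\le\Lambda\,\|A\|^2$ applied to $A=\Psi-\mathcal T$), the single-square argument avoids this bookkeeping altogether.
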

\begin{proof}
\begin{align*}
\partial_t tr_g  h= g^{i\bar q} g^{p\bar j}  h_{i\bar j} R_{p\bar q}.
\end{align*}
Denote $\Psi_{ij}^k=\hat \Gamma_{ij}^k-\Gamma_{ij}^k$.
\begin{align*}
\Delta tr_g h &=g^{i\bar j}g^{p\bar q}\nabla_ {\bar q}\nabla_p   h_{i\bar j}\\
&=g^{i\bar j}g^{p\bar q} \nabla_{\bar q} \left( \Psi_{pi}^k  h_{k\bar j}\right)\\
&=g^{i\bar j} g^{p\bar q}  \left[ \left(R_{p\bar q i}^k-\hat R_{p\bar q i}^k \right)  h_{k\bar j}+\Psi_{pi}^k \Psi_{\bar q \bar j}^{\bar l} h_{k\bar l}\right].
\end{align*}

Using the fact that  $\partial \theta_0=\partial \theta$, we have
\begin{align*}
R_{p\bar q i\bar l}&=R_{i\bar lp\bar q}-\nabla_pT_{\bar q\bar li}-\nabla_{\bar l}T_{pi\bar q}\\
&=R_{i\bar lp\bar q}-\nabla_p (T_0)_{\bar q\bar li}-\nabla_{\bar l} (T_0)_{pi\bar q}.
\end{align*}

Hence,
\begin{align*}
g^{i\bar j} g^{p\bar q}  h_{k\bar j}R^k_{p\bar q i}
&= g^{i\bar j}g^{k\bar l} g^{p\bar q}  h_{k\bar j}R_{p\bar q i \bar l}\\
&=g^{i\bar j}g^{k\bar l} g^{p\bar q}  h_{k\bar j} \left[ R_{i\bar lp\bar q}-\nabla_p (T_0)_{\bar q\bar li}-\nabla_{\bar l} (T_0)_{pi\bar q}\right]\\
&=g^{i\bar j} g^{k\bar l} h_{k\bar j}R_{i\bar l}-g^{i\bar j}g^{k\bar l} g^{p\bar q}  h_{k\bar j} \left[\nabla_p (T_0)_{\bar q\bar li}+\nabla_{\bar l} (T_0)_{pi\bar q}\right].
\end{align*}

Therefore,
\begin{align*}
&\quad \heat  tr_gh \\
&=- h_{k\bar l}g^{i\bar j} g^{p\bar q} \Psi_{pi}^k \overline{\Psi_{qj}^l}+g^{i\bar j} g^{p\bar q} \hat R_{p\bar q i\bar j}+g^{i\bar j}g^{k\bar l} g^{p\bar q} h_{k\bar j} \left[\nabla_p (T_0)_{\bar q\bar li}+\nabla_{\bar l} (T_0)_{pi\bar q}\right]\\
&=- h_{k\bar l}g^{i\bar j} g^{p\bar q} \Psi_{pi}^k \overline{\Psi_{qj}^l} +g^{i\bar j}g^{k\bar l} g^{p\bar q} h_{k\bar j}\left[ \Psi_{pi}^r (T_0)_{\bar l\bar q r}+\Psi_{\bar l \bar q}^{\bar s}(T_0)_{ip\bar s}\right]\\
&\quad +g^{i\bar j}g^{k\bar l} g^{p\bar q} h_{k\bar j}\left[\hat\nabla_p (T_0)_{\bar q\bar l i}+\hat\nabla_{\bar l} (T_0)_{pi\bar q}\right]+g^{i\bar j} g^{p\bar q} \hat R_{p\bar q i\bar j}\\
&=- h_{k\bar l}g^{i\bar j} g^{p\bar q} \Psi_{pi}^k \overline{\Psi_{qj}^l} +2Re\left[g^{i\bar j}g^{k\bar l} g^{p\bar q} h_{k\bar j}\Psi_{\bar l \bar q}^{\bar s}(T_0)_{ip\bar s}\right]\\
&\quad +g^{l\bar k}g^{j\bar i} g^{q\bar p} h_{j\bar k}(T_0)_{\bar i\bar p r}\left[ \hat T_{ql \bar s} h^{r \bar s}-(T_0)_{l q \bar s}g^{r\bar s}\right]\\
&\quad +g^{i\bar j}g^{k\bar l} g^{p\bar q} h_{k\bar j}\left[\hat\nabla_p (T_0)_{\bar q\bar l i}+\hat\nabla_{\bar l} (T_0)_{pi\bar q}\right]+g^{i\bar j} g^{p\bar q} \hat R_{p\bar q i\bar j}\\
&=(I)+(II)+(III),
\end{align*}
where
\begin{align*}
(I)&=- h_{k\bar l}g^{i\bar j} g^{p\bar q} \Psi_{pi}^k \overline{\Psi_{qj}^l} +2Re\left[g^{i\bar j}g^{k\bar l} g^{p\bar q} h_{k\bar j}\Psi_{\bar l \bar q}^{\bar s}(T_0)_{ip\bar s}\right];\\
(II)&=g^{l\bar k}g^{j\bar i} g^{q\bar p} h_{j\bar k}(T_0)_{\bar i\bar p r}\left[\hat T_{ql \bar s} h^{r \bar s}-(T_0)_{l q \bar s}g^{r\bar s}\right]\\
&\quad +g^{i\bar j}g^{k\bar l} g^{p\bar q} h_{k\bar j}\left[\hat\nabla_p (T_0)_{\bar q\bar l i}+\hat\nabla_{\bar l} (T_0)_{pi\bar q}\right];\\
(III)&=g^{i\bar j} g^{p\bar q} \hat R_{p\bar q i\bar j}.
\end{align*}
Thus,
\begin{align*}
 \heat \log \Lambda
&= {\Lambda}^{-1}\heat \Lambda +\Lambda^{-2}g^{i\bar j} \partial_i \Lambda \, \partial_{\bar j} \Lambda\\
&= \frac{1}{\Lambda}\left[  (I)+\frac{1}{tr_gh }|\partial \Lambda|^2\right] +\frac{1}{\Lambda}\Big[(II)+(III)\Big].
\end{align*}

In the special case that $g_0$ is K\"ahler, it was shown by Yau \cite{Yau1978} that the first bracket term is nonpositive. In the Hermitian case, we follow a generalization of this argument in \cite{TosattiWeinkove2015}. We consider the following nonegative term.
\begin{align*}
K&=h_{k\bar l}g^{i\bar j} g^{p\bar q}\left(\Psi_{pi}^k-\frac{\delta^k_i}{tr_gh}\partial_p \Lambda+C^k_{pi}\right)\left(\Psi_{\bar q\bar j}^{\bar l}-\frac{\delta_{\bar j}^{\bar l}}{tr_gh}\partial_{\bar q}\Lambda+C^{\bar l}_{\bar q \bar j}\right)\\
&=h_{k\bar l}g^{i\bar j} g^{p\bar q}\Psi_{pi}^k\Psi_{\bar q\bar j}^{\bar l}
-\frac{1}{tr_gh}|\partial tr_gh|^2+h_{k\bar l} g^{i\bar j} g^{p\bar q} C^k_{pi} \left( \Psi_{\bar q\bar j}^{\bar l}-\frac{\delta_{\bar j}^{\bar l}}{tr_gh}\partial_{\bar q}\Lambda\right)\\
&\quad +h_{k\bar l} g^{i\bar j} g^{p\bar q}  C^{\bar l}_{\bar q \bar j}\left(\Psi_{pi}^k-\frac{\delta^k_i}{tr_gh}\partial_p \Lambda \right)+h_{k\bar l} g^{i\bar j} g^{p\bar q} C^k_{pi} C^{\bar l}_{\bar q \bar j}.
\end{align*}
Hence,
\begin{align*}
 (I)+\frac{1}{\Lambda}|\partial \Lambda|^2
&=-K+2Re\left[g^{i\bar j}g^{k\bar l} g^{p\bar q} h_{k\bar j}\Psi_{\bar l \bar q}^{\bar s}(T_0)_{ip\bar s}\right]\\
&\quad +2Re\left[h_{k\bar l} g^{i\bar j} g^{p\bar q} C^k_{pi}  \Psi_{\bar q\bar j}^{\bar l}\right]\\
&\quad +h_{k\bar l} g^{i\bar j} g^{p\bar q} C^k_{pi} C^{\bar l}_{\bar q \bar j}-2\Lambda^{-1}Re\left[h_{k\bar l} g^{i\bar l} g^{p\bar q} C^k_{pi}\partial_{\bar q} \Lambda\right]\\
&=-K +h_{k\bar l} g^{i\bar j} g^{p\bar q} C^k_{pi} C^{\bar l}_{\bar q \bar j}-2\Lambda^{-1}Re\left[h_{k\bar l} g^{i\bar l} g^{p\bar q} C^k_{pi}\partial_{\bar q} \Lambda\right]\\
&\quad +2Re \left[\Psi^{\bar l}_{\bar q \bar j} g^{p\bar q}g^{i\bar j}\big(C^k_{pi}h_{k\bar l}+g^{k\bar r}h_{p\bar r}(T_0)_{ki\bar l}\big)\right].
\end{align*}
Therefore, if we choose the tensor $C$ to be
$$C_{pi}^q=-g^{k\bar r}h^{q\bar l}h_{p\bar r} (T_0)_{ki\bar l}$$
then the last term vanished. Hence, we have
\begin{align*}
 \heat \log \Lambda
&= {\Lambda}^{-1}\heat \Lambda +\Lambda^{-2}g^{i\bar j} \partial_i \Lambda \, \partial_{\bar j} \Lambda\\
&= (IV)+\Lambda^{-1}(II)+\Lambda^{-1}(III).
\end{align*}
where
\begin{align*}
(IV)&\leq \Lambda^{-1}h_{p\bar r} h_{c\bar q}h^{k\bar a}g^{s\bar r} g^{c\bar d} g^{i\bar j} g^{p\bar q}(T_0)_{si\bar a}(T_0)_{\bar d \bar j k}\\
&\quad +2\Lambda^{-2}Re\Big[h_{p\bar r} g^{a\bar r} g^{i\bar l} g^{p\bar q}  (T_0)_{ai\bar l} \partial_{\bar q} \Lambda\Big].
\end{align*}
The inequality on $(I)$ follows the same line as in the inequality of $A_1$ by considering a simpler quantity
$$K=h_{k\bar l} g^{i\bar j} g^{p\bar q}(\Psi_{pi}^k -C_{pi}^k)(\Psi_{\bar q\bar j }^{\bar l}-C_{\bar q\bar j}^{\bar l}).$$
\end{proof}

We have the following form of Royden's Lemma \cite{Royden1980} which relates the holomorphic sectional curvature with a bisectional curvature quantity.

In the following, $|\wh \nabla_{\bar\p}\wh T|_h(x)$ at a point $x$ is defined as:
\be\label{e-dbar}
|\wh \nabla_{\bar\p}\wh T|_h =\max |\wh\nabla_{\bar i}\wh T_{jl\bar k}|
\ee
where the maximum is taken over all   unitary frames $e_i$ of $h$ at $x$. Define $|\wh\nabla_{\dbar} T_0|_h$ similarly.

\begin{lma}\label{royden}
Let $(M,h)$ be a Hermitian manifold and $g$ is another metric on $M$. Suppose that the holomorphic sectional curvature of $h$ at $x$ is bounded above by $\kappa(x)$. Suppose $\kappa(x)\le \kappa_0$. Then we have

$$ g^{i\bar j} g^{k\bar l}  \hat R_{i\bar jk\bar l}\leq \lf(\frac{n+1}{2n}\kappa+|\wh \nabla_{\bar\p} \wh T|_h\ri)(\tr_gh)^2 +\frac12\kappa_0 \lf[-\frac1n(\tr_gh)^2+g^\ijb g^{k\bar l}h_{kj}h_{i\bar l}\ri].$$
\end{lma}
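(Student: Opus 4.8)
The plan is to reduce the quadratic-in-$g^{-1}$ contraction of the Chern curvature of $h$ to an averaging inequality over the unit sphere in $(T^{1,0}_x M, h)$, in the spirit of Royden's original argument. First I would fix a point $x$ and choose coordinates (equivalently a frame) that simultaneously unitarizes $h$ and diagonalizes $g$ at $x$, so that $h_{i\bar j}=\delta_{ij}$ and $g_{i\bar j}=\lambda_i\delta_{ij}$ with $\lambda_i>0$; write $\mu_i=\lambda_i^{-1}$ so that $g^{i\bar j}=\mu_i\delta_{ij}$, $\tr_g h=\sum_i\mu_i$, and $g^{i\bar j}g^{k\bar l}h_{k\bar j}h_{i\bar l}=\sum_i\mu_i^2$. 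In this frame the quantity to bound is $\sum_{i,k}\mu_i\mu_k\,\wh R_{i\bar i k\bar k}$ (after symmetrizing; note that for Hermitian metrics $\wh R_{i\bar j k\bar l}$ is not symmetric in the obvious pairs, and the discrepancy is exactly a covariant derivative of the torsion, which is where the $|\wh\nabla_{\dbar}\wh T|_h$ term must enter).

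Next I would invoke the pointwise integral identity underlying Royden's lemma: for any Hermitian form, integrating $\wh R(v,\bar v,v,\bar v)$ over the unit sphere $S=\{|v|_h=1\}$ against a suitable polynomial weight reproduces traces of the curvature. Concretely, with $v=\sum \xi_i e_i$, one has identities of the form $\fint_S \wh R(v,\bar v,v,\bar v)\,|\xi|^2\text{-type weights}$ expanding into $\frac{2}{n(n+1)}\big(\sum_i \wh R_{i\bar i i\bar i}+\sum_{i\neq k}(\wh R_{i\bar i k\bar k}+\wh R_{i\bar k k\bar i})\big)$ and similar. The holomorphic sectional curvature bound $\wh R(v,\bar v,v,\bar v)\le \kappa(x)|v|_h^4$ feeds into the left side, and after choosing the weight to be the appropriate quadratic form in $\xi$ built from the $\mu_i$ (so that the weighted average picks out $\sum_{i,k}\mu_i\mu_k \wh R_{i\bar i k\bar k}$ up to the $\kappa_0$-correction terms that account for the sign-indefiniteness of the weight and for replacing $\sum_i\wh R_{i\bar i i\bar i}$-type diagonal terms), I get
\begin{align*}
\sum_{i,k}\mu_i\mu_k\,\wh R_{i\bar i k\bar k}
\le \frac{n+1}{2n}\,\kappa\Big(\sum_i\mu_i\Big)^2+\frac12\kappa_0\Big[-\frac1n\Big(\sum_i\mu_i\Big)^2+\sum_i\mu_i^2\Big].
\end{align*}
The $\kappa_0$-term has the right shape because $-\frac1n(\tr_g h)^2+g^{i\bar j}g^{k\bar l}h_{k\bar j}h_{i\bar l}=\sum_i\mu_i^2-\frac1n(\sum_i\mu_i)^2\ge 0$ by Cauchy--Schwarz, so it is a genuine error term that can only help when $\kappa_0\le 0$.

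Finally, I would restore the torsion correction: the difference between $\sum_{i,k}\mu_i\mu_k\,\wh R_{i\bar i k\bar k}$ and the contraction $g^{i\bar j}g^{k\bar l}\wh R_{i\bar j k\bar l}$ actually appearing in the statement is controlled, using the Hermitian Bianchi-type identity $\wh R_{i\bar j k\bar l}-\wh R_{k\bar j i\bar l}=-\wh\nabla_i \wh T_{?}-\cdots$ (as already used in the proof of Lemma 3.1), by a term bounded by $|\wh\nabla_{\dbar}\wh T|_h\cdot(\tr_g h)^2$ after contracting with $g^{-1}\otimes g^{-1}$ and estimating in the unitary-for-$h$ frame via the definition \eqref{e-dbar}. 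Collecting the three pieces gives exactly the claimed inequality. The main obstacle is the bookkeeping in the second step: choosing the weight polynomial on $S$ so that the spherical average reproduces precisely $g^{i\bar j}g^{k\bar l}\wh R_{i\bar j k\bar l}$ with coefficient $1$ while the leftover diagonal and cross terms assemble into exactly $\frac12\kappa_0[-\frac1n(\tr_g h)^2+g^{i\bar j}g^{k\bar l}h_{k\bar j}h_{i\bar l}]$ and nothing worse — this is the delicate algebraic heart of Royden's lemma and must be done carefully, whereas the torsion correction and the Cauchy--Schwarz positivity are routine.
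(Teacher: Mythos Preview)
Your overall strategy is correct and matches the paper's: Royden's averaging inequality, the Hermitian Bianchi-type identity to absorb the asymmetry of $\wh R$ into a $\wh\nabla_{\dbar}\wh T$ term, and the $\kappa = (\kappa-\kappa_0)+\kappa_0$ splitting with Cauchy--Schwarz to produce the $\frac{n+1}{2n}\kappa$ and $\frac12\kappa_0[-\frac1n(\tr_gh)^2+\sum\mu_i^2]$ pieces. However, one step is mislocated and one is overcomplicated.

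The mislocated step is the torsion correction. In your diagonal frame $g^{i\bar j}g^{k\bar l}\wh R_{i\bar jk\bar l}$ is \emph{literally equal} to $\sum_{i,k}\mu_i\mu_k\wh R_{i\bar i k\bar k}$, so there is nothing to ``restore'' at the end. The torsion has to enter earlier: Royden's spherical averaging only sees the fully symmetrized tensor (since $\xi_i\bar\xi_j\xi_k\bar\xi_l$ is symmetric under $i\leftrightarrow k$ and $\bar j\leftrightarrow\bar l$), so what it bounds is the symmetric combination $g^{i\bar j}g^{k\bar l}(\wh R_{i\bar jk\bar l}+\wh R_{i\bar l k\bar j})$, not $g^{i\bar j}g^{k\bar l}\wh R_{i\bar jk\bar l}$ alone. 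Your displayed inequality (without the torsion term) is therefore only valid in the K\"ahler case. The paper handles this cleanly: it first records Royden's inequality in its natural symmetric form
\[
g^{i\bar j}g^{k\bar l}\wh R_{i\bar j k\bar l}+g^{i\bar j} g^{k\bar l} \wh R_{i\bar l k\bar j}\leq \kappa\,(\tr_g h)^2+\kappa\, g^{i\bar j} g^{k\bar l}h_{k\bar j} h_{i\bar l},
\]
then uses the Chern--curvature identity $\wh R_{i\bar jk\bar l}=\wh R_{i\bar lk\bar j}+\wh\nabla_i\wh T_{\bar j\bar l k}$ to rewrite the left side as $2g^{i\bar j}g^{k\bar l}\wh R_{i\bar jk\bar l}-g^{i\bar j}g^{k\bar l}\wh\nabla_i\wh T_{\bar j\bar l k}$, and only afterwards splits $\kappa=(\kappa-\kappa_0)+\kappa_0$ and applies $g^{i\bar j}g^{k\bar l}h_{k\bar j}h_{i\bar l}\ge \frac1n(\tr_gh)^2$ to the nonpositive $(\kappa-\kappa_0)$ piece.

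The overcomplication is your ``weighted spherical average'': no custom weight polynomial is needed. The standard quartic integral over $\{|v|_h=1\}$ already delivers the symmetric inequality above; the $\kappa_0$ correction comes from elementary algebra after the fact, not from a clever choice of weight. Once you correct the order of operations (symmetric Royden $\Rightarrow$ Bianchi/torsion $\Rightarrow$ $\kappa$-splitting), the proof is short and coincides with the paper's.
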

\begin{proof}
Following the proof in \cite{Royden1980} without appealing the symmetry of $\hat R$, we can deduce that at $x$,
$$g^{i\bar j}g^{k\bar l}\hat R_{i\bar j k\bar l}+g^{i\bar j} g^{k\bar l} \hat R_{i\bar l k\bar j}\leq \kappa(tr_g h)^2+\kappa g^{i\bar j} g^{k\bar l}h_{k\bar j} h_{i\bar l}.$$
By the "\K" identity for the Chern curvature, e.g. see \cite{TosattiWeinkove2015}. We have
\begin{equation}
\begin{split}
g^{i\bar j} g^{k\bar l}  \hat R_{i\bar jk\bar l}&=g^{i\bar j}g^{k\bar l}(\hat R_{i\bar l k\bar j}+\hat\nabla_i \hat T_{\bar j \bar l k})\\
&=\frac{1}{2}g^{i\bar j}g^{k\bar l}(\hat R_{i\bar l k\bar j}+\hat R_{i\bar j k\bar l}+\hat\nabla_i \hat T_{\bar j \bar l k})\\
&\leq \frac{\kappa}{2}(tr_g h)^2+ g^{i\bar j} g^{k\bar l}\left(\frac{\kappa}{2}h_{k\bar j} h_{i\bar l}+\hat\nabla_i \hat T_{\bar j \bar l k}\right)\\
  &\le\frac12(\kappa(x)-\kappa_0)\lf[(\tr_gh)^2+g^\ijb g^{k\bar l}h_{kj}h_{i\bar l}\ri]+(\tr_gh)^2|\wh \nabla_{\bar\p} \wh T|_h\\
  &+\frac12\kappa_0 \lf[(\tr_gh)^2+g^\ijb g^{k\bar l}h_{kj}h_{i\bar l}\ri]\\
  &\le \frac12(\kappa(x)-\kappa_0)(1+\frac1n)(\tr_gh)^2+(\tr_gh)^2|\wh \nabla_{\bar\p} \wh T|_h\\
  &+\frac12\kappa_0 \lf[(\tr_gh)^2+g^\ijb g^{k\bar l}h_{kj}h_{i\bar l}\ri]\\
  &= \lf(\frac{n+1}{2n}\kappa+|\wh \nabla_{\bar\p} \wh T|_h\ri)(\tr_gh)^2 +\frac12\kappa_0 \lf[-\frac1n(\tr_gh)^2+g^\ijb g^{k\bar l}h_{kj}h_{i\bar l}\ri]
\end{split}
\end{equation}
\end{proof}

\begin{cor}\label{c-trace}
With the same assumptions and notation as in Lemma \ref{l-tracee}. Suppose the holomorphic sectional curvature of $h$ is bounded above by $\kappa(x)$ at $x$ and suppose $\frac{n+1}{2n}\kappa(x)+|\wh\nabla_{\dbar}\wh T|(x)\le \kappa_0$ for some $\kappa_0\ge0$ for all $x$.
  Then
\bee
\heat \Lambda\le c(n)\lf(\Lambda^4|T_0|^2_h+\Lambda^3(|T_0|_h|\wh T|_h+|\wh \nabla_{\dbar} T_0|_h)+\Lambda^2 \kappa_0\ri)
\eee
  for some constant $c(n)>0$ depending only on $n$.

\end{cor}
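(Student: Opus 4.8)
The plan is to bound each of the three quantities $(I)$, $(II)$, $(III)$ in the identity $\heat\Lambda=(I)+(II)+(III)$ supplied by Lemma \ref{l-tracee}, working at a fixed point and in a local holomorphic frame that is unitary for $h$ and simultaneously diagonalizes $g$, say $h_{i\bar j}=\delta_{ij}$ and $g_{i\bar j}=\lambda_i\delta_{ij}$ with $\lambda_i>0$. Then $g^{i\bar j}=\lambda_i^{-1}\delta_{ij}$ and $\Lambda=\tr_gh=\sum_i\lambda_i^{-1}$, so in particular $\lambda_i^{-1}\le\Lambda$ for every $i$; all component norms below are taken in this frame.

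First I would handle $(III)=g^{i\bar j}g^{p\bar q}\wh R_{p\bar q i\bar j}$. After relabelling dummy indices this equals $g^{i\bar j}g^{k\bar l}\wh R_{i\bar j k\bar l}$, so Lemma \ref{royden} applies. The hypothesis $\tfrac{n+1}{2n}\kappa(x)+|\wh\nabla_{\dbar}\wh T|_h(x)\le\kappa_0$ bounds the first term of Royden's inequality by $\kappa_0\Lambda^2$, while for the second term one uses $g^{i\bar j}g^{k\bar l}h_{k\bar j}h_{i\bar l}=\sum_i\lambda_i^{-2}\le\Lambda^2$ together with $\kappa_0\ge0$; hence $(III)\le c(n)\kappa_0\Lambda^2$.

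For $(I)$ and the two groups of terms making up $(II)$ I would expand every contraction in the diagonalizing frame. Each becomes a sum over a few indices of a nonnegative weight $\lambda_{i_1}^{-1}\cdots\lambda_{i_m}^{-1}$ (here $m$ is the number of factors of $g^{-1}$ present) times two torsion-type components; bounding each $\lambda^{-1}$ by $\Lambda$ and applying Cauchy--Schwarz over the remaining indices produces $|T_0|_h$, $|\wh T|_h$ or $|\wh\nabla_{\dbar}T_0|_h$. Concretely: the estimate for $(I)$ in Lemma \ref{l-tracee} carries four factors of $g^{-1}$, so $(I)\le c(n)\Lambda^4|T_0|_h^2$; the first group of $(II)$ splits into a piece with three $g^{-1}$ factors times one $\wh T$, giving $\le c(n)\Lambda^3|T_0|_h|\wh T|_h$, and a piece with four $g^{-1}$ factors times a second $T_0$, giving $\le c(n)\Lambda^4|T_0|_h^2$; the second group of $(II)$ has three $g^{-1}$ factors and one covariant derivative of $T_0$, and since $|\wh\nabla_p(T_0)_{\bar q\bar l i}|=|\wh\nabla_{\bar p}(T_0)_{ql\bar i}|\le|\wh\nabla_{\dbar}T_0|_h$ after conjugating and maximizing over unitary frames, this contributes $\le c(n)\Lambda^3|\wh\nabla_{\dbar}T_0|_h$. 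Adding the four estimates and collecting constants into a single $c(n)$ gives the stated inequality.

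The genuinely delicate point (everything else being a mechanical application of Lemmas \ref{l-tracee} and \ref{royden} together with $\lambda_i^{-1}\le\Lambda$ and Cauchy--Schwarz) is the index bookkeeping in $(II)$: one must verify that the terms carrying a fourth factor of $g^{-1}$ are exactly the ones quadratic in $T_0$ (so they can be charged to $\Lambda^4|T_0|_h^2$ rather than generating a spurious $\Lambda^5$), and that the mixed holomorphic/anti-holomorphic covariant derivatives of $T_0$ appearing in the second group of $(II)$ are indeed dominated by $|\wh\nabla_{\dbar}T_0|_h$.
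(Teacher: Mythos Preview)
Your proposal is correct and is precisely the argument the paper intends: the corollary is stated without proof because it is meant to be read as an immediate consequence of the decomposition in Lemma \ref{l-tracee} together with Lemma \ref{royden}, bounded term by term in a frame that is $h$-unitary and $g$-diagonal exactly as you describe. One small bookkeeping remark: the $\kappa_0$ appearing in Lemma \ref{royden} is an upper bound for $\kappa(x)$ itself, whereas the corollary's $\kappa_0$ bounds $\tfrac{n+1}{2n}\kappa(x)+|\wh\nabla_{\dbar}\wh T|_h(x)$; since the latter forces $\kappa(x)\le \tfrac{2n}{n+1}\kappa_0$, this discrepancy is absorbed into $c(n)$ and your estimate $(III)\le c(n)\kappa_0\Lambda^2$ stands.
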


To get a $C^0$ estimate, it is useful to consider the Chern scalar curvature of $g(t)$ which gives us information on the derivatives of the volume form.
\begin{lma}\label{scalar}Under the evolution of metrics
$$\partial_t g=-Ric$$
the Chern scalar curvature $R$ satisfies
$$\heat R=|Ric|^2 \geq \frac{1}{n}R^2.$$
\end{lma}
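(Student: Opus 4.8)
The plan is to compute $\partial_t R$ directly, exploiting that the Chern--Ricci curvature is $-\ii\ddbar\log\det g(t)$, so that differentiating it in time produces no connection or torsion terms; in particular the argument is insensitive to whether $g(t)$ is \K or merely Hermitian.

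First I would differentiate the identity $g^{i\bar j}g_{k\bar j}=\delta^i_k$ in $t$ and substitute the flow equation $\partial_t g_{k\bar l}=-R_{k\bar l}$ to obtain $\partial_t g^{i\bar j}=R^{i\bar j}$, with indices raised by $g(t)$. Writing the Chern scalar curvature as $R=g^{i\bar j}R_{i\bar j}$, the product rule then gives
\[
\partial_t R=(\partial_t g^{i\bar j})R_{i\bar j}+g^{i\bar j}\partial_t R_{i\bar j}=|\Ric|^2+g^{i\bar j}\partial_t R_{i\bar j}.
\]
The second term is handled by the key observation: $\partial_i$ and $\partial_{\bar j}$ are coordinate derivatives, hence commute with $\partial_t$, so from $R_{i\bar j}=-\partial_i\partial_{\bar j}\log\det g(t)$ and $\partial_t\log\det g(t)=g^{k\bar l}\partial_t g_{k\bar l}=-R$ we get $\partial_t R_{i\bar j}=-\partial_i\partial_{\bar j}(\partial_t\log\det g(t))=\partial_i\partial_{\bar j}R$. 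Contracting with $g^{i\bar j}$ and recalling $\Delta F=g^{i\bar j}\partial_i\partial_{\bar j}F$, this is exactly $\Delta R$. Combining, $\heat R=|\Ric|^2$.

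For the inequality I would use Cauchy--Schwarz: $R=g^{i\bar j}R_{i\bar j}$ is the $g(t)$-Hermitian inner product of $\Ric$ with $g(t)$, and $|g(t)|^2_{g(t)}=g^{i\bar j}g^{k\bar l}g_{i\bar l}g_{k\bar j}=n$; equivalently, diagonalizing $g(t)$ at a point, $R=\sum_i R_{i\bar i}$ with each $R_{i\bar i}$ real, and $\big(\sum_i R_{i\bar i}\big)^2\le n\sum_i R_{i\bar i}^2\le n\,|\Ric|^2$. This yields $|\Ric|^2\ge \tfrac1n R^2$ pointwise, completing the statement. There is no genuine obstacle here; the only step demanding care is the commutation of $\partial_t$ with $\ddbar$ that produces the clean identity $\partial_t R_{i\bar j}=\partial_i\partial_{\bar j}R$ --- this is precisely what keeps the computation short and makes the torsion of $g(t)$ irrelevant to the formula.
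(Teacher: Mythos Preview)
Your proof is correct and follows essentially the same route as the paper: both differentiate $R=g^{i\bar j}R_{i\bar j}$ directly, use $R_{i\bar j}=-\partial_i\partial_{\bar j}\log\det g$ together with $\partial_t\log\det g=-R$ to obtain $\heat R=|\Ric|^2$, and then invoke Cauchy--Schwarz for the inequality. The only cosmetic difference is that the paper simultaneously diagonalizes $g$ and $\Ric$ at a point to write $R=\sum\lambda_i$ and $|\Ric|^2=\sum\lambda_i^2$, whereas you diagonalize only $g$ and bound $\sum_i R_{i\bar i}^2\le |\Ric|^2$; both are equivalent applications of the same inequality.
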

\begin{proof}
\begin{align*}
\partial_t R &=\partial_t (g^{i\bar j} R_{i\bar j}) \\
&=R^{i\bar j}R_{i\bar j}-g^{i\bar j} \partial_i\partial_{\bar j}(\partial_t \log\det g)\\
&=|Ric|^2+\Delta R.
\end{align*}
The inequality can be observed by taking a coordinate at $p$ such that $g_{i\bar j}=\delta_{ij}$ and $R_{i\bar j}=\lambda_i \delta_{ij}$. Then it follows immediately by Cauchy inequality.
\end{proof}

We have the following maximum principle.

\begin{lma}\label{max} Let $(M^n,h)$ be a complete noncompact Hermitian manifold satisfying condition: {\bf (a1)}There exists a  smooth positive real exhaustion function $\rho$  such that $|\partial \rho|^2_h+|\sqrt{-1}\partial\bar\partial \rho|_h\leq C_1$. Suppose $g_0$ is another Hermitian metric uniformly equivalent to $h$ and $g(t)$ is a solution to the Chern-Ricci flow with initial metric $g(0)=g_0$ on $M\times[0,S)$. Assume for any $0<S_1<S$, there is $C_2>0$ such that
$$
C_2^{-1}h\le g(t)
$$
for $0\leq t\le S_1$.
Let $f$ be  a smooth function    on $M\times[0,S)$ which is bounded from above such that
$$
\heat f\le0
$$
on $\{f>0\}$. Suppose  $f\le 0$ at $t=0$, then $f\le 0$ on $M\times[0,S)$.
\end{lma}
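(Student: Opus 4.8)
The plan is to prove a standard maximum principle on a complete noncompact manifold by the Omori--Yau-type cutoff argument using the exhaustion function $\rho$ supplied by condition \textbf{(a1)}. Fix any $S_1 < S$; it suffices to prove $f \le 0$ on $M \times [0, S_1]$, and then let $S_1 \uparrow S$. On $[0,S_1]$ we have the two-sided bound $C_2^{-1} h \le g(t) \le C h$ (the upper bound coming from the uniform equivalence of $g_0$ and $h$ together with the known short-time control, or more simply from $g(t) \le g_0 + \int_0^{S_1}(-\mathrm{Ric})$; in any case on a compact time interval $g(t)$ stays uniformly equivalent to $h$), so the Laplacian $\Delta = g^{i\bar j}\p_i\p_{\bar j}$ is uniformly elliptic relative to $h$ and the quantities $|\p\rho|^2_g$, $|\sqrt{-1}\p\bar\p\rho|_g$ are controlled by $C_1$ up to a factor depending on $C_2$.

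The core step: suppose for contradiction that $\sup_{M\times[0,S_1]} f = 2\delta > 0$ (this sup is finite since $f$ is bounded above). For $\e > 0$ small and $A > 0$ large (to be chosen), consider the perturbed function
\[
F_{\e} = f - \e\rho - At.
\]
Since $\rho$ is an exhaustion and $f$ is bounded above, $F_\e \to -\infty$ as $\rho \to \infty$ uniformly in $t \in [0,S_1]$, so $F_\e$ attains its maximum over $M \times [0,S_1]$ at some point $(x_0,t_0)$. Choosing $\e$ small enough that $\e\rho(x_0')$ is tiny at a point $x_0'$ where $f$ is close to $2\delta$, and $A$ small enough that $A S_1 < \delta$, we arrange $\max F_\e > \delta > 0$; in particular at $(x_0,t_0)$ we have $f(x_0,t_0) > \delta > 0$ and, since $f(\cdot,0)\le 0$, necessarily $t_0 > 0$. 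At an interior spatial maximum with $t_0>0$ the usual first/second derivative conditions give $\p_t F_\e \ge 0$, $\bar\p F_\e = 0$, and $\sqrt{-1}\p\bar\p F_\e \le 0$, hence
\[
0 \le \heat F_\e = \heat f - \e\,(-\Delta\rho) \cdot(-1) \ \text{wait, carefully:}\ \heat F_\e = \heat f + \e\Delta\rho - A.
\]
Since $f(x_0,t_0) > 0$, the hypothesis gives $\heat f \le 0$ there, so $0 \le \e\Delta\rho - A$ at $(x_0,t_0)$. But $|\Delta\rho| = |g^{i\bar j}\p_i\p_{\bar j}\rho| \le C_2 \cdot n \cdot |\sqrt{-1}\p\bar\p\rho|_h \le C(n,C_2)C_1 =: K$, so $0 \le \e K - A$, a contradiction once $A > \e K$. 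Choosing, say, $\e$ so small that both $\e\rho$ is negligible at a near-maximizing point and $\e K < \delta/S_1$, and then $A \in (\e K, \delta/S_1)$, yields the contradiction. Therefore $\sup f \le 0$, as desired.

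I do not expect a genuine obstacle here; the only points requiring a little care are (i) justifying that $g(t)$ is uniformly equivalent to $h$ on $[0,S_1]$ so that $\Delta\rho$ is bounded with respect to $g(t)$ — this uses the stated lower bound $C_2^{-1}h \le g(t)$ plus boundedness of Chern-Ricci on the compact interval, or one simply observes $\p_t\log\det g = -R$ is bounded hence $\det g$, and with the lower bound the upper bound follows; (ii) confirming that the spatial Hessian and time-derivative conditions at the maximum are applicable, which is standard since $F_\e$ achieves an actual maximum in the interior of $M$ at $t_0 \in (0,S_1]$ (at $t_0 = S_1$ one uses the one-sided condition $\p_t F_\e \ge 0$, which is all that is needed). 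The argument is insensitive to whether $g_0$ or $g(t)$ is K\"ahler, using only the ellipticity of $\Delta$ and the bound on $\sqrt{-1}\p\bar\p\rho$.
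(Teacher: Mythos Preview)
Your approach is essentially the paper's: perturb $f$ by $-\e\rho - At$ to force an interior maximum, then derive a contradiction from the differential inequality. The paper ties $A$ to $\e$ (taking $A = 2\e C_1 C_2$), shows directly that $f - \e\rho - At \le 0$ for every $\e > 0$, and lets $\e \to 0$; this is marginally cleaner than your separate choices of $A$ and $\e$, but the content is identical.

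There is one point that needs correction. You assert a two-sided bound $C_2^{-1}h \le g(t) \le Ch$ and offer several justifications for the upper bound (short-time control, integrating $-\Ric$, boundedness of $R$). None of these can be extracted from the hypotheses of the lemma: no curvature bound is assumed, and in the paper this lemma is invoked (e.g.\ in the proof of Lemma~\ref{l-bgeom-1}) precisely to \emph{derive} an upper bound on $g(t)$, so assuming one here would be circular. Fortunately the upper bound is simply unnecessary. Your own estimate $|\Delta\rho| = |g^{i\bar j}\rho_{i\bar j}| \le nC_2\,|\sqrt{-1}\p\bar\p\rho|_h \le nC_1C_2$ uses only $g^{-1} \le C_2 h^{-1}$, i.e.\ only the lower bound $g(t) \ge C_2^{-1}h$: diagonalize $g$ in an $h$-unitary frame so that $g^{i\bar j}\rho_{i\bar j} = \sum_i \lambda_i^{-1}\rho_{i\bar i}$ with $\lambda_i^{-1} \le C_2$ and $|\rho_{i\bar i}| \le C_1$. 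Once you drop the claims about the upper bound, the argument goes through exactly as you wrote it.
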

\begin{proof}
For any $\e>0$, if $\sup_{M\times [0,T]}(f-\e \rho-2\e C_1C_2t)>0$, then there is $(x_0,t_0)$ with $t_0>0$ such that $f-\e \rho-2\e C_1C_2t \leq 0$ on $M\times [0,t_0]$ and $f-\e \rho-2\e C_1C_2t=0$ at $(x_0,t_0)$. In particular, $f(x_0,t_0)>0$. Hence at $(x_0,t_0)$, we have
$$0\leq \heat (f-\e \rho-2\e C_1C_2t)<0,$$
which is impossible. Since $\e$ is arbitrary, this completes the proof.
\end{proof}


Next we give a local estimate on the Chern scalar curvature's lower bound.
\begin{lma}\label{locR}
Suppose $h$ is a fixed hermitian metric satisfying {\bf (a1)} and $g(t)$ is a solution to the Chern Ricci flow on $M\times [0,S]$ with $g(t) \geq \alpha^{-1} h$ for some $\alpha>1$. Then for any $0<r_1<r_2$, there exists $C_n>0$ such that for any $x\in M$ with $\rho(x)<r_1$ and $t\in [0,S]$, we have
$$R(x,t)\geq -\max\left\{C_n \alpha  [(r_2-r_1)^{-2}+1],\sup_{\rho(y)<r_2}R_-(y,0) \right\}.$$
\end{lma}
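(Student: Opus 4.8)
The plan is to run a localized maximum principle argument on the Chern scalar curvature, using the evolution equation $\heat R = |\Ric|^2 \geq \tfrac{1}{n}R^2$ from Lemma \ref{scalar}. The obstacle compared with the global case is that $R$ need not be bounded below a priori on all of $M$, so I cannot simply use the global maximum principle Lemma \ref{max}; I need to confine the argument to the ball $\{\rho < r_2\}$ by introducing a spatial cutoff built from the exhaustion function $\rho$, whose gradient and complex Hessian are controlled by {\bf (a1)}.

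First I would fix $0 < r_1 < r_2$ and choose a smooth cutoff $\phi = \phi(\rho)$ with $\phi \equiv 1$ on $\{\rho \le r_1\}$, $\phi \equiv 0$ on $\{\rho \ge r_2\}$, $0 \le \phi \le 1$, and $|\phi'|^2/\phi \le C(r_2-r_1)^{-2}$, $|\phi''| \le C(r_2-r_1)^{-2}$. Set $F = \phi \cdot (R + A)$ where $A>0$ is a constant to be chosen (so that $F \ge 0$ initially would be false in general — instead I track the minimum of $F$). Actually the cleaner route: consider $G = \phi\, u$ where $u = -R$ and look at where $G$ attains its spatial maximum in $\{\rho < r_2\} \times [0,S]$; since $\phi$ vanishes on the boundary, any positive maximum of $G$ over this set is attained at an interior point. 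Compute $\heat G$ using $\heat u = -|\Ric|^2 \le -\tfrac1n R^2 = -\tfrac1n u^2$ and the product rule $\heat(\phi u) = \phi\heat u - u\Delta\phi - 2\,\mathrm{Re}\,g^{i\bar j}\partial_i\phi\,\partial_{\bar j} u$. The last term is handled by the Kato-type trick: at an interior spatial maximum of $G$ one has $\partial(\phi u) = 0$, so $u\,\partial\phi = -\phi\,\partial u$, which converts the cross term into something comparable to $\phi^{-1}|\partial\phi|^2 u^2$, i.e.\ bounded by $C\alpha (r_2-r_1)^{-2} \cdot \phi u^2 \cdot u^{-1}$... more carefully it produces a term of the form $C(r_2-r_1)^{-2} u$ times $G$-controlled quantities. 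The term $-u\Delta\phi$ is bounded using $|\Delta\phi| = |g^{i\bar j}\partial_i\partial_{\bar j}\phi| \le \tr_g h \cdot |\ddbar\phi|_h \le \alpha n \cdot C(r_2-r_1)^{-2}$, where I use $g(t) \ge \alpha^{-1}h$, hence $g^{i\bar j} \le \alpha h^{i\bar j}$ as forms, giving $\tr_g h \le \alpha n$; the term $|\partial\phi|_g^2 \le \alpha |\partial\phi|_h^2$ is bounded similarly.

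Putting this together, at a would-be interior maximum point $(x_0,t_0)$ with $t_0>0$ of $G$ over $\{\rho<r_2\}\times[0,S]$, if $G(x_0,t_0)$ is large then the dominant term $-\tfrac1n\phi u^2$ beats the error terms $C\alpha(r_2-r_1)^{-2}\,u$ (all the error terms are at most linear in $u$ after the cutoff manipulations, up to the factor $\alpha$), forcing $0 \le \heat G < 0$, a contradiction. This yields a bound $G(x_0,t_0) \le C_n\alpha[(r_2-r_1)^{-2}+1]$, hence $\phi u \le C_n\alpha[(r_2-r_1)^{-2}+1]$ on $\{\rho<r_2\}\times[0,S]$, and since $\phi\equiv 1$ on $\{\rho<r_1\}$ we get $-R = u \le C_n\alpha[(r_2-r_1)^{-2}+1]$ there — \emph{provided} the maximum of $G$ is not attained at $t_0=0$; if it is, then $G \le \sup_{\rho(y)<r_2} R_-(y,0)$, which accounts for the second term in the asserted bound. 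Taking the max of the two cases gives exactly the stated inequality.

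The main obstacle I anticipate is the bookkeeping of the cross term $-2\,\mathrm{Re}\,g^{i\bar j}\partial_i\phi\,\partial_{\bar j}u$ at the maximum point: one must be careful that, after substituting $u\partial\phi = -\phi\partial u$, the resulting expression is genuinely dominated (with the correct power of $\alpha$) by the good negative term $-\tfrac1n\phi u^2$ plus a constant times $\alpha(r_2-r_1)^{-2}$, rather than producing an uncontrolled $u^2$ term with the wrong sign. A standard way around this is to instead work with $F = \phi^2 u$ (or to apply the inequality $2ab \le \tfrac{1}{n}a^2 + n b^2$ with $a = \phi^{1/2}u$), absorbing the cross term's $u^2$-part into a small multiple of $\phi u^2$; one also needs the elementary fact that $R(x,t) + A \ge 0$ is not needed — only the differential inequality and the sign structure matter. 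Everything else (existence and properties of $\phi$, the estimate $\tr_g h \le n\alpha$, the interior-maximum argument) is routine given {\bf (a1)} and the hypothesis $g(t)\ge\alpha^{-1}h$.
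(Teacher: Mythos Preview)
Your proposal is correct and is essentially identical to the paper's own argument: the paper also multiplies $R$ by a cutoff $\Phi=\phi\bigl((\rho-r_1)/(r_2-r_1)\bigr)$, applies the evolution inequality $\heat R\ge \tfrac1n R^2$, and at an interior extremum with $t_0>0$ uses $\partial(\Phi R)=0$ to convert the cross term into $2R\,\Phi^{-1}|\partial\Phi|_g^2$, which is linear in $R$ and controlled via $|\phi'|^2/\phi\le C$, $\phi''\ge -C\phi$, and $\tr_gh\le n\alpha$. Your worry about a stray $u^2$ term in the cross term is unnecessary---after the substitution the cross term is genuinely linear in $u$, so there is no need to pass to $\phi^2 u$.
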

\begin{proof}
Let $\phi$ be a cutoff function on $\mathbb{R}$ such that $\phi\equiv 1$ on $(-\infty,1]$, vanishes outside $(-\infty,2]$ and satisfies $\phi^{-1}|\phi'|^2\leq 100$ and $\phi'' \geq -100\phi$. Define
$$\Phi(x,t)=\phi\left( \frac{\rho(x)+r_2-2r_2}{r_2-r_1}\right).$$
When the function $\Phi R$ achieves its local minimum at $(x_0,t_0)$ in which we may assume $R(x_0,t_0)<0$ and $t_0>0$, it satisfies the following.
\begin{align*}
0&\geq  \heat (\Phi R)\\
&=\Phi \heat R -R \Delta \Phi -2Re\left( g^{i\bar j}\partial_i \Phi \, \partial_{\bar j} R\right)\\
&\geq \frac{1}{n}\Phi R^2-R \left[\frac{\phi''}{(r_2-r_1)^2}|\partial \rho|^2+\frac{\phi'}{r_2-r_1}\Delta \rho-2\frac{(\phi')^2}{(r_2-r_1)^2\phi}|\partial\rho|^2\right]\\
&\geq  \frac{1}{n}\Phi R^2+C_n\Lambda R[(r_2-r_1)^{-2}+1].
\end{align*}
Hence, at its minimum point $(x_0,t_0)$,
$$\Phi R\geq -C_n \Lambda  [(r_2-r_1)^{-2}+1].$$
The conclusion follows by the minimum principle.
\end{proof}

\section{Existence of the Chern-Ricci flow}\label{existence}

In this section, we will discuss the existence of the Chern-Ricci flow starting from a Hermitian metric with holomorphic sectional curvature bounded from above. We will give a estimate on the existence time. More generally, we will consider initial metric which is uniformly equivalent to a Hermitian metric with holomorphic sectional curvature bounded from above.

Recall the following definition of bounded geometry:
\begin{defn}\label{boundedgeom} Let $(M^n, g)$ be a complete Hermitian manifold. Let  $k\ge 1$ be an integer and $0<\alpha<1$. $g$ is said to have   bounded geometry of   order $k+\alpha$ if there are positive numbers $r, \kappa_1, \kappa_2$  such that at every $p\in M$ there is a neighborhood $U_p$ of $p$, and local biholomorphism $\xi_{p}$ from $D(r)$ onto $U_p$ with $\xi_p(0)=p$ satisfying  the following properties:
  \begin{itemize}
    \item [(i)] the pull back metric $\xi_p^*(g)$   satisfies:
    $$
    \kappa_1 g_e\le \xi_p^*(g)\le \kappa_2 g_e$$ where $g_e$ is the standard metric on $\C^n$;
    \item [(ii)] the components $g_{\ijb}$ of $\xi_p^*(g)$ in the natural coordinate of $D(r)\subset \C^n$  are uniformly bounded in the standard $C^{k+\alpha}$ norm in $D(r)$ independent of $p$.
  \end{itemize}
$(M,g)$ is said to have bounded geometry of infinity order if instead of (ii) we have for any $k$, the $k$-th derivatives of $g_\ijb$ in $D(r)$ are bounded by a constant independent of $p$. $g$ is said to have bounded curvature of infinite order on a compact set $\Omega$ if (i) and (ii) are true for all $k$ for all $p\in \Omega$.
 \end{defn}

\begin{lma}\label{l-bgeom-1} Let $(M^n,g_0)$ be a Hermitian metric with bounded geometry of infinite order. Suppose $g_0$ is uniformly equivalent to a Hermitian metric $h$ with holomorphic sectional curvature and torsion satisfying: $H_h(x)$ bounded above by $\kappa(x)$ and $\frac{n+1}{2n}\kappa(x)+|\wh\nabla_{\dbar}\wh T|_h(x)\le \kappa_0$ for some $\kappa_0\ge0$ for all $x$,  so that
$$
 \a^{-1}h\le g_0\le \a h,
 $$
 Then the Chern-Ricci flow has a solution $g(t)$ with $g(0)=g_0$ on $M\times[0,S]$ with the following properties:

\begin{enumerate}
  \item [(i)]  There is a constant $c=c(n)>0$ so that
  $$
  S\ge\displaystyle{\frac1{  3c(n\a+1)^{3}   \mathfrak{s} }}=:S_1
  $$
   where
  $$
  \mathfrak{s}=\sup_M\lf( |T_0|^2_h+ |T_0|_h|\wh T|_h+|\wh\nabla_{\dbar} T_0|_h +\kappa_0\ri)
  $$
  where $T_0$ is the torsion of $g_0$, $\wh T$ is the torsion of $h$ and $\wh \nabla$ is the derivative of $h$ with respect to the Chern connection; and
  \item [(ii)] $g(t)$ is uniformly equivalent to $h$ with
  $$
  \tr_{g}h\le \lf(\frac{1}{(n\a+1)^{-3}-3c_2  \mathfrak{s} t}\ri)^\frac13
  $$
  on $M\times[0,S_1]$.
\end{enumerate}

\end{lma}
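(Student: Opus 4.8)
The plan is to combine the short-time existence Lemma 2.1 (which requires bounded geometry of infinite order) with the a priori trace estimate coming from Corollary 3.1, using a continuity/barrier argument to get both the lower bound on the existence time $S$ and the quantitative bound on $\tr_g h$. First I would invoke Lemma 2.1 to produce a maximal solution $g(t)$ of the Chern-Ricci flow on $M\times[0,S_{\max})$ with $g(0)=g_0$ and, on each compact time subinterval, two-sided bounds against $g_0$ (hence against $h$, by the hypothesis $\a^{-1}h\le g_0\le\a h$). The goal is to show that as long as $\tr_g h$ stays controlled, the solution persists, and then to close the loop by proving the desired bound on $\tr_g h$ on $[0,S_1]$.

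The heart of the argument is the differential inequality for $\Lambda=\tr_g h$. By Corollary 3.1, under the hypothesis $\frac{n+1}{2n}\kappa(x)+|\wh\nabla_{\dbar}\wh T|_h(x)\le\kappa_0$, we have
\be
\heat\Lambda\le c(n)\lf(\Lambda^4|T_0|_h^2+\Lambda^3\big(|T_0|_h|\wh T|_h+|\wh\nabla_{\dbar}T_0|_h\big)+\Lambda^2\kappa_0\ri).
\ee
Since $g_0$ and $h$ are uniformly equivalent and $\Lambda\ge \tr_h h/\a = n/\a \ge $ a positive constant (and also $\Lambda\le n\a$ at $t=0$), I would bound the right-hand side crudely: on the region where $\Lambda\ge 1$ the lower-order powers are dominated by $\Lambda^4$, so with $\mathfrak{s}=\sup_M(|T_0|_h^2+|T_0|_h|\wh T|_h+|\wh\nabla_{\dbar}T_0|_h+\kappa_0)$ one gets $\heat\Lambda\le c(n)\,\mathfrak{s}\,\Lambda^4$ wherever $\Lambda\ge 1$. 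At $t=0$, $\Lambda\le n\a$, hence $\Lambda\le n\a+1$. Compare $\Lambda$ with the spatially-constant ODE solution $\varphi(t)$ solving $\varphi'=c(n)\mathfrak{s}\varphi^4$, $\varphi(0)=(n\a+1)$, namely $\varphi(t)=\big((n\a+1)^{-3}-3c(n)\mathfrak{s}t\big)^{-1/3}$, which is finite and increasing on $[0,S_1)$ with $S_1=\frac{1}{3c(n)(n\a+1)^3\mathfrak{s}}$ (matching the asserted formula after relabeling the constant). Using the maximum principle Lemma 3.4 applied to $f=\Lambda-\varphi(t)$ — whose hypotheses are exactly the completeness and exhaustion condition {\bf (a1)} on $h$ plus the local uniform equivalence $g(t)\ge C_2^{-1}h$ guaranteed by Lemma 2.1 on compact time intervals — one concludes $\Lambda\le\varphi(t)$, i.e.
\be
\tr_g h\le\lf(\frac{1}{(n\a+1)^{-3}-3c_2\mathfrak{s}t}\ri)^{1/3}
\ee
on $M\times[0,\min\{S_{\max},S_1\}]$. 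This gives (ii), modulo checking $S_{\max}\ge S_1$.

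For (i), i.e. $S_{\max}\ge S_1$: the upper bound on $\tr_g h$ just obtained, together with volume/scalar-curvature control, upgrades to a two-sided bound $C^{-1}h\le g(t)\le Ch$ on $[0,S_1)$ with $C$ independent of $t$. Indeed Lemma 3.3 shows $\heat R=|\Ric|^2\ge n^{-1}R^2$, so $R$ is bounded below on $[0,S_1)$ (by the ODE comparison $R\ge -n/t$, or using Lemma 3.6 for a local version), which controls $\p_t\log\det g = R$ from below; combined with the upper bound on $\tr_g h$ this pins $\det g$ and hence the lower bound for $g$. A standard Shi-type argument then bounds all derivatives of the curvature on slightly smaller time intervals, so the flow extends past any $t<S_1$; hence $S_{\max}\ge S_1=:S_1$ in the statement's notation. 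The main obstacle I anticipate is the bookkeeping in this last step — making the derivative estimates and the bounded-geometry-of-infinite-order condition propagate cleanly so that the extension argument applies uniformly up to $S_1$ — rather than the trace estimate itself, which is essentially the ODE comparison above. One also has to be slightly careful that the constant $c_2$ appearing in (ii) is consistent with the $c$ used in defining $S_1$ in (i); this is just a matter of fixing one universal $c(n)$ at the outset and using it throughout.
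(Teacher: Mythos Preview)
Your overall strategy matches the paper's: invoke the maximal short-time solution (the paper cites \cite[Theorem 4.2]{LeeTam2017} rather than Lemma \ref{l-shortime-1} directly, but the content is the same), bound $\heat\Lambda$ via Corollary \ref{c-trace}, compare with the explicit ODE barrier $v(t)=\big((n\a+1)^{-3}-3c\,\mathfrak{s}\,t\big)^{-1/3}$ using Lemma \ref{max}, and then extend past any $t<S_1$ by controlling $\det g$ through a scalar-curvature lower bound together with the local higher-order estimates of \cite{ShermanWeinkove2013}. The paper packages the lower powers of $\Lambda$ into $(\Lambda+1)^4$ rather than restricting to $\{\Lambda\ge1\}$, but your version works equally well since on $\{\Lambda>\varphi(t)\}$ one automatically has $\Lambda>n\a+1>1$.

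There is one concrete slip in the closing step. Along the Chern-Ricci flow $\p_t\log\det g=-R$, not $+R$; hence the lower bound $R\ge -C_2$ (from Lemma \ref{locR}, which needs the boundedness of $R(\cdot,0)$ coming from bounded geometry of infinite order) gives an \emph{upper} bound $\det g\le C_3\det h$. Combined with the \emph{lower} bound $g\ge C^{-1}h$ you already have from $\tr_g h\le C$, an eigenvalue argument then produces the missing \emph{upper} bound on $g$. As you wrote it, you would only recover a lower bound on $\det g$, which is redundant, and the two-sided equivalence would not close. Two smaller points: condition \textbf{(a1)} required for Lemma \ref{max} is not an explicit hypothesis here --- it is a consequence of bounded geometry of infinite order (bounded curvature supplies a good exhaustion function for $g_0$, hence for the uniformly equivalent $h$); and the bare ODE bound $R\ge -n/t$ is useless near $t=0$, so the bounded initial scalar curvature is essential input to Lemma \ref{locR}.
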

\begin{proof} By \cite[Theorem 4.2]{LeeTam2017}, there is  a maximal $S>0$ such that the Chern-Ricci flow has a solution $g(t)$ with $g(0)=g_0$ on $M\times[0,S)$ so that $g(t)$ is uniformly equivalent to $g_0$ on $[0,S']$ for all $S'<S$. Let $\Lambda=\tr_{g(t)}h$. By Corollary \ref{c-trace},

\bee
\begin{split}
\heat \Lambda\le& c_1 \lf(\Lambda^4|T_0|^2_h+\Lambda^3(|T_0|_h|\wh T|_h+|\wh T_0|) +\kappa_0)\ri)\\
\le& c_1\lf(\Lambda+1\ri)^4 \mathfrak{s}
\end{split}
\eee
 Here and below $c_i$ will denote a positive constants depending only on $n$.
 Let
 $$
 v(t)=\lf(\frac{1}{(n\a+1)^{-3}-3c_2  \mathfrak{s} t}\ri)^\frac13
 $$
 Then $v(t)$ is defined on $[0,S_1)$ with $S_1=1/ \lf[3c_2(n\a+1)^{3}   \mathfrak{s} \ri]  $, with
$$\frac{dv}{dt}=c_2\mathfrak{s}v^4$$
and $v(0)\ge (\Lambda+1)|_{t=0}$.
Suppose $S< S_1$. Since $\Lambda$ and $v$ are  bounded on $[0,S']$ for all $0<S'<S$, by Lemma \ref{max} as in the proof of \cite[Theorem 4.2]{LeeTam2017}, one can conclude that
$$
\Lambda\le v(t)-1
$$
on $M\times[0,S)$. In particular,
\be\label{e-trace-1}
h\le c_3(v(t)-1)g(t)
\ee

If $S<S_1$, then $v(t)\le C_1<\infty$ on $[0,S]$ for some $C_1$. Hence $\Lambda\le C_1$ on $M\times[0,S)$.

On the other hand, since $g_0$ has bounded geometry of infinite order, by Lemma \ref{locR}, we conclude that $R(x,t)\ge -C_2$ on $M\times[0,S)$ for some $C_2$. Since
$$
\frac{\p}{\p t}\lf(\log\frac{ \det(g(t) }{\det (h)}\ri)=-R\le C_2,
$$
we conclude that $\det(g(t))\le C_3\det(h)$. Together with \eqref{e-trace-1}, we conclude that
$$
C_3^{-1}g_0\le g(t)\le C_3g_0
$$
on $M\times[0,S)$ for some $C_3>0$. Here we have used the fact that $g_0$ is uniformly equivalent to $h$. Using the fact that $g_0$ has bounded geometry of infinite order and by the local estimates of \cite{ShermanWeinkove2013}, $g(t)$ can be extended to be a solution of the Chern-Ricci flow which is uniformly equivalent to $g_0$ beyond $S$. Hence we have $S\ge S_1$. This proves (i).

(ii) Follows from \eqref{e-trace-1}.

\end{proof}

Let $(M^n,h)$ be a complete noncompact Hermitian manifold and let $g_0$ be another Hermitian metric satisfying the following:
\begin{enumerate}
\item[ ({\bf a})] There exists smooth exhaustion $\rho\geq 1$, and constant $\beta>0$ such that
$$|\partial \rho|_h+|\sqrt{-1}\partial\bar\partial \rho|_h \leq \beta \rho  $$
if $\rho$ is large enough.
\item[({\bf b})] The holomorphic sectional curvature at $x$ is bounded from above by $\kappa(x)$,  and the torsion of $h$ is such that
    $$
    \frac{n+1}{2n}\kappa+| \wh\nabla_{\dbar}\wh T|_h\le \kappa_0
    $$
    for some $\kappa_0\ge0$.
    \end{enumerate}

\begin{thm}\label{t-existence}
Let  $(M^n,h)$ be a complete Hermitian metrics as above. Suppose $g_0$ is another Hermitian metric satisfying:

\begin{enumerate}
\item[(i)] $\a^{-1}g_0\leq h\leq \a g_0$
and  {$|\wh T|_{h} \leq \a$ for some $\a>0$;}
\item[(ii)]There is $\beta>0$,
$ |T_0|^2_{h}+|\wh  T|_h|T_0|_h+|\wh \nabla_{\dbar}(T_0)|_h\leq \beta;$ and
\item[(iii)]
$|\wh \nabla g_0|_h \leq \beta \rho $ for $\rho$ large enough.
\end{enumerate}

There exist constants    $c_1$ depending only on $n$ and $c_2$ depending only on $n,\a$  such that   there is a solution $g(t)$ for the Chern-Ricci flow on $M\times[0,S)$ with $g(0)=g_0$, where
$$
S=\frac{1}{2c_1(n\a+1)^3\mathfrak{s}}
$$
and $\mathfrak{s}=\kappa_0+c_2\beta(1+ \beta).$ Moreover,
$$
\tr_gh\le v(t)-1
$$
where
$$
v(t)=\lf(\frac1{(n\a+1)^{-3}-3c_1 \mathfrak{s}t}\ri)^\frac13.
$$
on $M\times[0,S)$.
\end{thm}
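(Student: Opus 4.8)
The plan is to reduce the theorem to Lemma~\ref{l-bgeom-1} by an approximation argument. Lemma~\ref{l-bgeom-1} already delivers the Chern--Ricci flow, the trace bound and the lower bound on the existence time once the initial metric has bounded geometry of infinite order, and hypotheses (a) and (iii) are exactly what let one replace $g_0$ by nearby metrics of that regularity without spoiling the torsion quantities governing the existence time. So I would proceed in three steps: (I) construct approximating metrics $g_{0,k}$ with bounded geometry of infinite order; (II) apply Lemma~\ref{l-bgeom-1} to each and observe that the resulting bounds are uniform in $k$; (III) pass to the limit $k\to\infty$ with interior estimates.

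\emph{Step (I).} Fix the exhaustion $\rho$ from (a), set $\Omega_k=\{\rho<k\}$, fix $\phi$ with $\phi\equiv 1$ on $(-\infty,1]$ and $\phi\equiv 0$ on $[2,\infty)$, and put $g_{0,k}=\chi_kg_0+(1-\chi_k)\wt g_k$ where $\chi_k=\phi(\rho/k)$ and $\wt g_k$ is a mollification of $g_0$ carried out with respect to the Chern connection of $h$ at a small scale $\delta_k>0$ depending on $k$. I would arrange that: (1) $g_{0,k}=g_0$ on $\Omega_k$, hence $g_{0,k}\to g_0$ in $C^\infty_{\mathrm{loc}}$; (2) $g_{0,k}$ has bounded geometry of infinite order; (3) $\a^{-1}g_{0,k}\le h\le \a g_{0,k}$ on $M$, which is automatic since $\{g:\a^{-1}g\le h\le \a g\}$ is convex and contains both $g_0$ and $\wt g_k$; and (4) there is $c_2=c_2(n,\a)$ independent of $k$ with $\sup_M\big(|T_{0,k}|_h^2+|\wh T|_h|T_{0,k}|_h+|\wh\nabla_{\dbar}T_{0,k}|_h\big)\le c_2\beta(1+\beta)$, where $T_{0,k}$ is the torsion of $g_{0,k}$. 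The delicate property is (4): on the annulus $\{\rho\in[k,2k]\}$ hypothesis (a) gives $|\p\rho|_h\le\beta\rho\le 2\beta k$ and $|\ii\ddbar\rho|_h\le 2\beta k$, so $|\p\chi_k|_h\le C\beta$ and $|\ii\ddbar\chi_k|_h\le C\beta(1+\beta)$ uniformly in $k$; combining this with (ii), which controls the torsion data of $g_0$ by $\beta$, with (iii), which controls $|\wh\nabla g_0|_h$ by $\beta\rho\le 2\beta k$ on $\Omega_{2k}$, and with the fact that mollification at scale $\delta_k$ perturbs $g_0$ and $\wh\nabla g_0$ by $O(\delta_k\beta k)$ there, one checks that every error term introduced by the interpolation is $\le c_2\beta(1+\beta)$ once $\delta_k$ is small enough (the $\beta^2$ contribution coming from the quadratic terms in $\p\chi_k$ and from $\ii\ddbar\chi_k$). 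Property (2) is then arranged by the usual smoothing inside local holomorphic charts; and since $h$ itself is untouched, hypothesis (b) still holds for the pair $(g_{0,k},h)$ with the same $\kappa(\cdot)$ and $\kappa_0$.

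\emph{Steps (II) and (III).} Lemma~\ref{l-bgeom-1} applied to $g_{0,k}$ (its curvature hypothesis being (b)) gives a Chern--Ricci flow $g_k(t)$ with $g_k(0)=g_{0,k}$ on $M\times[0,S_k)$, uniformly equivalent to $h$, satisfying $\tr_{g_k}h\le v_k(t)-1$ with $S_k$ and $v_k$ determined by $\mathfrak{s}_k=\sup_M(|T_{0,k}|_h^2+|\wh T|_h|T_{0,k}|_h+|\wh\nabla_{\dbar}T_{0,k}|_h)+\kappa_0$. By (4), $\mathfrak{s}_k\le\mathfrak{s}:=\kappa_0+c_2\beta(1+\beta)$, and $\a$ is the same for every $k$; hence, with $c_1,c_2$ and $S,v$ as in the statement, each $g_k(t)$ exists on $M\times[0,S)$ with $\tr_{g_k}h\le v(t)-1$ there, uniformly in $k$. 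Now fix $0<T<S$. The bound $\tr_{g_k}h\le v(T)-1$ on $[0,T]$ gives the lower bound $g_k(t)\ge(v(T)-1)^{-1}h$; on each ball $\{\rho<r\}$, Lemma~\ref{locR} together with $\p_t\log\det g_k=-R$ (the Chern scalar curvature evolution) and with $g_{0,k}=g_0$ on $\{\rho<r\}$ for $k$ large bounds $\det g_k(t)$ from above independently of $k$, which with the lower bound makes $g_k(t)$ two-sidedly equivalent to $h$ on $\{\rho<r\}\times[0,T]$ uniformly in $k$. The interior estimates of Sherman--Weinkove \cite{ShermanWeinkove2013} then yield uniform $C^\infty_{\mathrm{loc}}$ bounds for $g_k(t)$ on $M\times(0,T]$, and near $t=0$ the bounds come from $g_{0,k}\equiv g_0$ on compacta for $k$ large. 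Arzel\`a--Ascoli and a diagonal argument produce a subsequential $C^\infty_{\mathrm{loc}}$ limit $g(t)$ on $M\times[0,S)$, which is a Chern--Ricci flow with $g(0)=g_0$ and $\tr_gh\le v(t)-1$.

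The step I expect to be the main obstacle is (I): one must arrange simultaneously that the $g_{0,k}$ have bounded geometry of infinite order — so that Lemma~\ref{l-bgeom-1} is available — \emph{and} that $\sup_M\big(|T_{0,k}|_h^2+|\wh T|_h|T_{0,k}|_h+|\wh\nabla_{\dbar}T_{0,k}|_h\big)$ stays $\le c_2\beta(1+\beta)$ uniformly in $k$, although $|\wh\nabla g_0|_h$ is only assumed to be $\le\beta\rho$. Controlling the cut-off through hypothesis (a) and the gluing errors through hypothesis (iii) is precisely what keeps these bounds independent of $k$. Everything after Step (I) is a routine combination of Lemma~\ref{l-bgeom-1}, Lemma~\ref{locR}, the scalar curvature evolution and interior parabolic estimates.
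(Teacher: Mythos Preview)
Your Steps (II) and (III) match the paper almost verbatim: apply the lemma to each approximant, use Lemma~\ref{locR} to bound the Chern scalar curvature from below on compacta, integrate $\p_t\log\det g=-R$ to bound the volume form above, get two-sided equivalence with $h$ on compact sets, invoke Sherman--Weinkove interior estimates, and pass to a diagonal limit. The disagreement is entirely in Step (I), and there your construction has a real gap.

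The paper does \emph{not} try to produce approximants $g_{0,k}$ with bounded geometry of infinite order on all of $M$. Instead it restricts to the precompact sublevel set $U_{\rho_i}=\{\rho<\rho_i\}$ and multiplies \emph{both} $g_0$ and $h$ by an explicit conformal factor $e^{2F^{(i)}}$ that blows up at $\p U_{\rho_i}$, obtaining complete metrics $g_{0,i}=e^{2F^{(i)}}g_0$ and $h_i=e^{2F^{(i)}}h$ on $U_{\rho_i}$. Because $U_{\rho_i}$ is precompact in $M$, bounded geometry of infinite order for $g_{0,i}$ follows from a known lemma (Lemma~\ref{l-boundedgeom}, coming from \cite{Hochard2016,LeeTam2017}); and since the conformal factor is explicit, one can compute directly (Lemma~\ref{conformal-t-h}) how $T_{0,i}$, $\wh T_i$, $\wh\nabla^{(i)}_{\dbar}T_{0,i}$ and the holomorphic sectional curvature of $h_i$ change, with all error terms controlled by $\beta(1+\beta)$ via hypotheses (a), (ii), (iii). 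Lemma~\ref{l-bgeom-1} is then applied to the pair $(g_{0,i},h_i)$ on $U_{\rho_i}$.

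Your mollification-plus-cutoff scheme runs into trouble on two fronts. First, ``bounded geometry of infinite order'' for $g_{0,k}$ on the \emph{noncompact} region $\{\rho>2k\}$ requires uniform local holomorphic charts in which the metric has uniform $C^\ell$ bounds for every $\ell$; neither $h$ nor $g_0$ is assumed to have any such chart structure at infinity (conditions (a) and (b) do not give bounded geometry of $h$), so there is no evident way to mollify ``with respect to the Chern connection of $h$'' and obtain this. The phrase ``the usual smoothing inside local holomorphic charts'' is precisely what is unavailable here. Second, even granting a mollification, your claimed control $|\wh\nabla_{\dbar}T_{0,k}|_h\le c_2\beta(1+\beta)$ in the transition annulus is not justified: expanding $T_{0,k}$ for $g_{0,k}=\chi_kg_0+(1-\chi_k)\wt g_k$ and then applying $\wh\nabla_{\dbar}$ produces terms such as $\p\chi_k\cdot\wh\nabla_{\dbar}(g_0-\wt g_k)$; bounding $\wh\nabla_{\dbar}(g_0-\wt g_k)$ by $O(\delta_k\beta k)$ would require a bound on $|\wh\nabla^2 g_0|_h$, which you do not have (hypothesis (iii) controls only one derivative). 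The paper's explicit conformal factor avoids exactly this, since every error term can be written out and bounded using only $|\p\rho|_h$, $|\ii\ddbar\rho|_h$, $|\wh\nabla g_0|_h$, and the given torsion data.

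In short: the reduction-to-Lemma-\ref{l-bgeom-1} strategy and the limiting argument are right, but you should replace your Step (I) by the conformal completion on exhausting precompact domains, applied simultaneously to $g_0$ and $h$.
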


We want to apply Lemma \ref{l-bgeom-1}. However, in general it is not true that $g_0$ has bounded geometry of all order, we cannot apply Lemma \ref{l-shortime-1} directly to obtain a solution of the Chern-Ricci flow. We now proceed as in \cite{LeeTam2017,LeeTam2017-2} to construct a Hermitian approximation.

Let $\tau\in (0,\frac{1}{8})$, $f:[0,1)\to[0,\infty)$ be the function:
\be\label{e-exh-1}
 f(s)=\left\{
  \begin{array}{ll}
    0, & \hbox{$s\in[0,1-\tau]$;} \\
    -\displaystyle{\log \lf[1-\lf(\frac{ s-1+\tau}{\tau}\ri)^2\ri]}, & \hbox{$s\in (1-\tau,1)$.}
  \end{array}
\right.
\ee
Let   $\varphi\ge0$ be a smooth function on $\R$ such that $\varphi(s)=0$ if $s\le 1-\tau+\tau^2 $, $\varphi(s)=1$ for $s\ge 1-\tau+2 \tau^2 $
\be\label{e-exh-2}
 \varphi(s)=\left\{
  \begin{array}{ll}
    0, & \hbox{$s\in[0,1-\tau+\tau^2]$;} \\
    1, & \hbox{$s\in (1-\tau+2\tau^2,1)$.}
  \end{array}
\right.
\ee
such that $\displaystyle{\frac2{ \tau^2}}\ge\varphi'\ge0$. Define
 $$\mathfrak{F}(s):=\int_0^s\varphi(\tau)f'(\tau)d\tau.$$
From \cite{LeeTam2017-2}, we have:

\begin{lma}\label{l-exhaustion-1} Suppose   $0<\tau<\frac18$. Then the function $\mathfrak{F}\ge0$ defined above is smooth and satisfies the following:
\begin{enumerate}
  \item [(i)] $\mathfrak{F}(s)=0$ for $0\le s\le 1-\tau+\tau^2$.
  \item [(ii)] $\mathfrak{F}'\ge0$ and for any $k\ge 1$, $\exp( -k\mathfrak{F})\mathfrak{F}^{(k)}$ is uniformly  bounded.
  \item [(iii)]  For any $ 1-2\tau <s<1$, there is $\tau>0$ with $0<s -\tau<s +\tau<1$ such that
 \bee
 1\le \exp(\mathfrak{F}(s+\tau)-\mathfrak{F}(s-\tau))\le (1+c_2\tau);\ \ \tau\exp(\mathfrak{F}(s_0-\tau))\ge c_3\tau^2
 \eee
  for some absolute constants  $c_2>0, c_3>0$.
\end{enumerate}

\end{lma}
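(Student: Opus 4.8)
The plan is to prove (i)--(iii) by direct computation, using the structural fact that on $(1-\tau,1)$ one has $f=-\log(1-u^2)$ with $u=u(s):=(s-1+\tau)/\tau$, so that $f$ is minus the logarithm of the smooth function $1-u^2$, which has a simple zero as $u\to 1^-$. The only failure of smoothness of $f$ alone is at $s=1-\tau$, where $f\sim u^2$ makes $f$ merely $C^1$, and this is exactly what the cutoff $\varphi$ is designed to repair. Since $\varphi\equiv 0$ on $[0,1-\tau+\tau^2]$, a set containing a neighbourhood of $1-\tau$, and since $\varphi$ (smooth on $\R$ and constant on $[0,1-\tau+\tau^2]$) vanishes to infinite order at $1-\tau+\tau^2$, the product $\varphi f'$ extends to a $C^\infty$ function on $[0,1)$ that is identically $0$ on $[0,1-\tau+\tau^2]$. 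Hence $\mathfrak F(s)=\int_0^s\varphi f'$ is $C^\infty$, vanishes on $[0,1-\tau+\tau^2]$, and $\mathfrak F'=\varphi f'\ge 0$ since both factors are nonnegative; this gives (i) and the monotonicity part of (ii).

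For the higher-derivative bound in (ii), I would split $[0,1)$ at $1-\tau+2\tau^2$. On $[1-\tau+2\tau^2,1)$ one has $\varphi\equiv 1$, so $\mathfrak F(s)=f(s)+C_\tau$ for the constant $C_\tau=\int_0^{1-\tau+2\tau^2}\varphi f'-f(1-\tau+2\tau^2)$; consequently $\mathfrak F^{(k)}=f^{(k)}$ and $e^{-k\mathfrak F}=e^{-kC_\tau}(1-u^2)^k$ there. Since $u(s)$ is affine in $s$, a one-line chain-rule computation gives $f^{(k)}(s)=\tau^{-k}(k-1)!\,[\,(1-u)^{-k}+(-1)^k(1+u)^{-k}\,]$, so $e^{-k\mathfrak F}\mathfrak F^{(k)}=e^{-kC_\tau}\tau^{-k}(k-1)!\,[\,(1+u)^k+(-1)^k(1-u)^k\,]$, which is bounded because $0\le u<1$. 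On the complementary compact interval $[0,1-\tau+2\tau^2]$ every derivative of the $C^\infty$ function $\varphi f'$ is bounded and $e^{-k\mathfrak F}\le 1$ since $\mathfrak F\ge 0$, so the quantity is bounded there too; patching the two ranges gives (ii). (For a bound independent of $\tau$ one records in addition that $C_\tau=O(\tau^2)$.)

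For (iii), given $s$ near the endpoint I would take the free scale to be a fixed small multiple of $\tau(1-s)$, say $\delta=\tfrac18\tau(1-s)$. Then $0<s-\delta<s+\delta<1$ and $1-s-\delta\ge\tfrac12(1-s)$ follow immediately from $0<\tau<\tfrac18$ and $1-2\tau<s$. The inequality $1\le\exp(\mathfrak F(s+\delta)-\mathfrak F(s-\delta))$ holds since $\mathfrak F$ is nondecreasing; for the upper bound one uses $0\le\varphi\le 1$ together with the elementary estimate $f'(\sigma)\le (1-\sigma)^{-1}$ on $[0,1)$ to get $\mathfrak F(s+\delta)-\mathfrak F(s-\delta)\le 2\delta(1-s-\delta)^{-1}\le 4\delta(1-s)^{-1}=\tfrac12\tau$, hence $\exp(\mathfrak F(s+\delta)-\mathfrak F(s-\delta))\le e^{\tau/2}\le 1+c_2\tau$ for an absolute $c_2$. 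The remaining lower bound $\delta\exp(\mathfrak F(s-\delta))\ge c_3\delta^2$ is immediate from $\mathfrak F\ge 0$ and $\delta<1$; if instead a lower bound on $\delta\,e^{-\mathfrak F(s-\delta)}$ is what the application needs, it follows similarly from the lower bound $e^{-\mathfrak F}\gtrsim\min\{1,(1-s)/\tau\}$ near the point $s-\delta$ combined with $\delta\asymp\tau(1-s)$ and $C_\tau=O(\tau^2)$.

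I expect the only genuine difficulty to be bookkeeping: in (ii) one must simultaneously control $C_\tau$, the transition layer $[1-\tau+\tau^2,\,1-\tau+2\tau^2]$ on which both $\varphi$ and $f'$ are active, and the Leibniz expansion of the derivatives of $\varphi f'$; and in (iii) one must keep track of the powers of $\tau$ so that $c_2$ and $c_3$ come out absolute. The one conceptual point — that $e^{-f}=1-u^2$ exactly cancels the $(1-u)^{-k}$ poles of $f^{(k)}$, so that $f$ genuinely behaves like minus the logarithm of a function with a simple zero — is what makes all the estimates go through.
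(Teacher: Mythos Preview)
The paper does not actually prove this lemma; it is simply quoted from \cite{LeeTam2017-2} with the words ``From \cite{LeeTam2017-2}, we have:''. So there is no proof in the present paper to compare against.

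Your direct-computation approach is correct and is essentially how one would verify the lemma by hand. The key structural observation you isolate --- that $e^{-f}=1-u^{2}$ exactly cancels the $(1-u)^{-k}$ poles of $f^{(k)}$ --- is precisely the point that makes (ii) work. Your argument for (i) and for smoothness (the cutoff $\varphi$ vanishing on a neighbourhood of the only non-$C^\infty$ point of $f$) is complete; the split at $1-\tau+2\tau^{2}$ in (ii) together with the explicit formula $f^{(k)}(s)=\tau^{-k}(k-1)!\,[(1-u)^{-k}+(-1)^{k}(1+u)^{-k}]$ gives the bound as stated (the bound is allowed to depend on the fixed parameter $\tau$ and on $k$, which is all the lemma claims).

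For (iii) you correctly detect the notational overload in the paper's statement: the ``$\tau$'' appearing inside (iii) is a new scale variable (your $\delta$), distinct from the parameter $\tau$ in the definition of $f$ and $\varphi$, and $s_{0}$ is evidently meant to be $s$. Your choice $\delta=\tfrac18\tau(1-s)$ and the clean estimate $f'(\sigma)\le(1-\sigma)^{-1}$ (which follows from $f'=\tau^{-1}[(1-u)^{-1}-(1+u)^{-1}]$ and $\tau(1-u)=1-\sigma$) yield the ratio bound $\exp(\mathfrak F(s+\delta)-\mathfrak F(s-\delta))\le 1+c_{2}\tau$ exactly as you say. The final inequality, read literally, is trivial from $\mathfrak F\ge 0$, and your parenthetical covering the alternate reading with $e^{-\mathfrak F}$ is a sensible precaution given the typographical state of the statement.
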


For   any  $\rho_0>0$, let  $U_{\rho_0}$ be the component of $ \{x|\ \rho(x)<\rho_0\}$ which contains a fixed point and $\rho$ is the positive exhaustion function mentioned above. Hence $U_{\rho_0}$ will exhaust $M$ as $\rho_0\to\infty$.

Let $\rho_i>1$ be a sequence increasing to $+\infty$, let $F^{(i)}(x)=\mathfrak{F}(\rho(x)/\rho_i)$. Let $g_{0,i}=e^{2F^{(i)}}g_0$. In the following, $F^{(i)}$ will be denoted simply by $F$ if there is no confusion arisen.

 Then $(U_{\rho_i},g_i)$ is a complete Hermitian metric, (e.g. see \cite{Hochard2016}) and $ g_{i,0}=g_0$ on $\{\rho(x)<(1-\kappa+\kappa^2)\rho_0\}$. Moreover, the new manifold has a very nice structure.
\begin{lma}[\cite{LeeTam2017}]\label{l-boundedgeom}
For each $\rho_i>1$ sufficiently large, $(U_{\rho_i},g_{0,i})$ has bounded geometry of infinite order.
\end{lma}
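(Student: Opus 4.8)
The plan is to reduce the assertion to producing a uniform family of holomorphic bounded–geometry charts for $g_{0,i}=e^{2F^{(i)}}g_0$ in the sense of Definition \ref{boundedgeom}, with constants that may depend on the (now fixed) index $i$, and to treat separately an interior region where $g_{0,i}=g_0$ and a collar near $\partial U_{\rho_i}$ where the conformal factor does all the work. Since $\rho$ is a positive exhaustion, $\overline{U_{\rho_i}}$ is compact, so the smooth Hermitian metric $g_0$ automatically has bounded geometry of infinite order on a neighbourhood of $\overline{U_{\rho_i}}$: there are $r_\ast=r_\ast(i)>0$, $\kappa_\ast(i)>1$ and $C_k(i)$ and, for each $q$ near $\overline{U_{\rho_i}}$, a biholomorphism $\xi^0_q\colon D(r_\ast)\to V_q$ with $\xi^0_q(0)=q$, $\kappa_\ast^{-1}g_e\le(\xi^0_q)^\ast g_0\le\kappa_\ast g_e$ and $\|(\xi^0_q)^\ast g_0\|_{C^k(D(r_\ast))}\le C_k(i)$ for all $k$. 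The only obstruction to transplanting this to $g_{0,i}$ on $U_{\rho_i}$ is completeness (which is already recorded above), and the factor $e^{2F^{(i)}}$ — equal to $1$ on $\{\rho\le(1-\tau+\tau^2)\rho_i\}$ and blowing up like a smoothing of $-\log\!\big(1-(\rho/\rho_i-1+\tau)^2/\tau^2\big)$ near $\partial U_{\rho_i}$ — is engineered precisely to repair this without spoiling the $C^k$ geometry.

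On the interior $\Omega:=\{\rho\le(1-\tau+\tau^2)\rho_i\}$ one has $F^{(i)}\equiv0$ and $g_{0,i}=g_0$, and $\Omega$ lies at positive $g_0$–distance from $\partial U_{\rho_i}$, so (after shrinking the $V_q$ to lie inside $U_{\rho_i}$) the charts $\xi^0_q$ already serve as bounded–geometry charts for $g_{0,i}$ there. For the collar, fix $p$ with $s_0:=\rho(p)/\rho_i\in(1-\tau+\tau^2,1)$ and apply Lemma \ref{l-exhaustion-1}(iii) at $s=s_0$: this produces a scale $\eta=\eta(s_0)$ with $0<s_0-\eta<s_0+\eta<1$ such that on the annulus $A_p:=\{(s_0-\eta)\rho_i<\rho<(s_0+\eta)\rho_i\}$ the factor $e^{2F^{(i)}}=e^{2\mathfrak F(\cdot/\rho_i)}$ stays within a fixed bounded ratio of the constant $\lambda^2:=e^{2\mathfrak F(s_0)}$, together with the matching control relating $\eta$ to $\lambda$. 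Taking $\xi^0:=\xi^0_p$, restricting it to the sub–polydisc of radius $c\,r_\ast\lambda^{-1}$, and using $|\partial\rho|_h\le\beta\rho\le\beta\rho_i$ from (a) with $g_0\asymp h$ and $g_0\asymp g_e$ on $D(r_\ast)$, one checks that for a suitable $c=c(i)$ the image lies inside $A_p\cap U_{\rho_i}$; one then sets
\[
\xi_p\colon D(r_0)\to U_{\rho_i},\qquad \xi_p(w):=\xi^0\!\big(\lambda^{-1}w\big),\qquad r_0:=c(i)\,r_\ast,
\]
a chart of \emph{fixed} radius independent of $p$.

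It then remains to verify the two clauses of Definition \ref{boundedgeom} for $\xi_p^\ast g_{0,i}$ with $p$–independent constants. For the two–sided bound, $\xi_p^\ast g_{0,i}=e^{2F^{(i)}\circ\xi_p}\,\xi_p^\ast g_0$ where $e^{2F^{(i)}\circ\xi_p}\asymp\lambda^2$ on $A_p$ and $\xi_p^\ast g_0(w)=\lambda^{-2}(\xi^0)^\ast g_0(\lambda^{-1}w)\asymp\lambda^{-2}g_e$, so the factors $\lambda^{\pm2}$ cancel and $\kappa_1g_e\le\xi_p^\ast g_{0,i}\le\kappa_2g_e$. For the $C^k$ bounds, write $(\xi_p^\ast g_{0,i})_{\ijb}(w)=e^{2F^{(i)}(\xi^0(\lambda^{-1}w))}\,\lambda^{-2}\,(\xi^0)^\ast g_{0,\ijb}(\lambda^{-1}w)$: each $w$–derivative brings a factor $\lambda^{-1}$; derivatives of $(\xi^0)^\ast g_{0,\ijb}$ are bounded by $C_k(i)$; and derivatives of $e^{2F^{(i)}\circ\xi^0}$, expressed via $F^{(i)}=\mathfrak F(\rho/\rho_i)$, are controlled by combinations of $\mathfrak F^{(j)}(\rho/\rho_i)\rho_i^{-j}$ and derivatives of $\rho$ and of $g_0$ (all bounded on $\overline{U_{\rho_i}}$, using (a)), with $|\mathfrak F^{(j)}|\le C_je^{j\mathfrak F}\asymp C_j\lambda^{j}$ on $A_p$ by Lemma \ref{l-exhaustion-1}(ii). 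Hence every $w$–derivative of $(\xi_p^\ast g_{0,i})_{\ijb}$ is $O(1)$ uniformly in $p$ — the powers of $\lambda$ cancel once more — and together with the interior case this gives bounded geometry of infinite order.

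\textbf{Main obstacle.} The entire argument hinges on the collar estimate, and specifically on the fact that the divergent conformal factor $\lambda^2=e^{2\mathfrak F(s_0)}$ drops out of \emph{all} of the normalised quantities, so that the charts retain a fixed radius and uniform $C^k$ bounds as $p\to\partial U_{\rho_i}$. That cancellation is exactly what the two tailored properties of $\mathfrak F$ — the oscillation/scale control in Lemma \ref{l-exhaustion-1}(iii) and the bound $e^{-k\mathfrak F}\mathfrak F^{(k)}=O(1)$ in Lemma \ref{l-exhaustion-1}(ii) — are designed to deliver; the only input from the ambient hypotheses is $|\partial\rho|_h+|\ddbar\rho|_h\le\beta\rho$ from (a) (together with $g_0\asymp h$), used to tie the $\rho$–scale to the $g_0$–scale inside each chart. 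I expect the delicate bookkeeping of these cancellations near $\partial U_{\rho_i}$ to be the one genuinely technical point; away from the boundary everything follows from the relative compactness of $U_{\rho_i}$.
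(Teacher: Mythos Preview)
The paper does not supply its own proof of this lemma; it is quoted from \cite{LeeTam2017} without argument. Your outline is essentially the argument carried out there: use that $\overline{U_{\rho_i}}\subset\{\rho\le\rho_i\}$ is compact, so $g_0$ already has bounded geometry of infinite order on a neighbourhood; on the interior where $F^{(i)}\equiv 0$ the $g_0$--charts serve directly, and in the collar one rescales them by the factor $\lambda^{-1}=e^{-\mathfrak F(s_0)}$ and checks that all powers of $\lambda$ cancel in both the metric equivalence and the $C^k$ bounds. You have correctly identified that this cancellation --- driven precisely by Lemma~\ref{l-exhaustion-1}(ii) (the bound $e^{-k\mathfrak F}\mathfrak F^{(k)}=O(1)$) and (iii) (the oscillation/scale control $\eta\lambda\gtrsim 1$) --- is the one genuine point, and your sketch handles it.

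One minor simplification: your appeal to hypothesis~(a) for $|\partial\rho|_h\le\beta\rho$ is not actually needed here. For a \emph{fixed} $i$, the closure $\overline{U_{\rho_i}}$ is compact, so all partial derivatives of $\rho$ and of $g_0$ in the $\xi^0$--charts are bounded by constants depending only on $i$, purely by smoothness on a compact set; the growth hypothesis on $\rho$ plays no role in this lemma. This does not affect the correctness of your argument.
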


In the following, we will estimate the torsion and the holomorphic sectional curvature after performing conformal change.
\begin{lma}\label{conformal-t-h}Let $g_0$ and $h$ be as in Theorem \ref{t-existence}. For $i\to\infty$, let $g_{0,i}$ be as in Lemma \ref{l-boundedgeom} and $h_i=e^{2F}h$ for the corresponding $F$. Let $T_{0,i}$ be the torsion of $g_{0,i}$ Then there is a constant $c(n,\a)$ depending only on $n$ and $\a$ so that  as $i\to\infty$, there
 \begin{enumerate}
        \item [(i)] $|T_{0,i}|^2_{h_i}\le c\beta $, where
        \item [(ii)] $|T_{ 0,i }|_{h_i}|\hat  T^{(i)}|_{h_i}\le c\beta$;
        \item [(iii)] $|\wh \nabla^{(i)}_{\dbar} T_{0,i}|_{h_i}\le  c\beta(1+\beta)$, where $\wh \nabla^{(i)}$ is derivative with respect to the Chern connection of $h_i$;
            \item[(iv)]
            $$
\frac{n+1}{2n}\kappa_i(x)+|\wh\nabla^{(i)}_{\dbar} T_i |_{h_i}(x)\le \kappa_0
+c \beta(1+\beta)
$$
where $\kappa_i(x)$ is the upper bound of holomorphic sectional curvature of $h_i$ at $x$ and $T_i$ is the torsion of $h_i$.
      \end{enumerate}

\end{lma}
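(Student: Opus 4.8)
The plan is to reduce everything to the conformal transformation laws for the Chern connection, torsion and curvature under $g\mapsto e^{2F}g$, applied with $F = F^{(i)} = \mathfrak{F}(\rho/\rho_i)$, combined with the derivative bounds on $F$ supplied by Lemma~\ref{l-exhaustion-1} and the hypotheses of Theorem~\ref{t-existence}. First I would record those laws: for $\tilde g = e^{2F}g$ one has $\tilde\Gamma^k_{ij} = \Gamma^k_{ij} + 2\delta^k_j\partial_iF$, the torsion transforms by $\tilde T_{ij\bar l} = e^{2F}\big(T_{ij\bar l} + 2\partial_iF\,g_{j\bar l} - 2\partial_jF\,g_{i\bar l}\big)$, and the Chern curvature by $\tilde R_{i\bar jk\bar l} = e^{2F}\big(R_{i\bar jk\bar l} - 2(\partial_i\partial_{\bar j}F)g_{k\bar l}\big)$. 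Evaluating the last identity on an $h$-unit vector $X\in T^{1,0}_xM$ shows the new holomorphic sectional curvature bound is $\kappa_i(x)\le e^{-2F}\big(\kappa(x) + 2|\sqrt{-1}\partial\bar\partial F|_h\big)$. Differentiating the torsion law Leibniz-wise and using the Christoffel change together with metric compatibility of the Chern connection then gives the convenient identities
\[
\wh\nabla^{(i)}_{\bar m}\wh T^{(i)}_{ij\bar l} = e^{2F}\big(\wh\nabla_{\bar m}\wh T_{ij\bar l} + 2(\partial_{\bar m}\partial_iF)h_{j\bar l} - 2(\partial_{\bar m}\partial_jF)h_{i\bar l}\big),
\]
\[
\wh\nabla^{(i)}_{\bar m}T_{0,i;\,ij\bar l} = e^{2F}\,\wh\nabla_{\bar m}\big((T_0)_{ij\bar l} + 2\partial_iF\,(g_0)_{j\bar l} - 2\partial_jF\,(g_0)_{i\bar l}\big),
\]
the right-hand connections and norms being those of $h$; expanding the second one produces $\wh\nabla_{\bar m}(T_0)_{ij\bar l}$, a Hessian term $(\partial_{\bar m}\partial_iF)(g_0)_{j\bar l}$, and a cross term $\partial_iF\,\wh\nabla_{\bar m}(g_0)_{j\bar l}$ (together with their $i\leftrightarrow j$ companions).

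Next I would estimate the derivatives of $F$. Since $dF$ is supported where $1-\tau+\tau^2<\rho/\rho_i<1$, on that set $\rho<\rho_i$, so hypothesis (a) of Theorem~\ref{t-existence} gives $|\partial F|_h\le\mathfrak{F}'\,|\partial\rho|_h/\rho_i\le\beta\mathfrak{F}'$ and $|\sqrt{-1}\partial\bar\partial F|_h\le\mathfrak{F}''\,|\partial\rho|_h^2/\rho_i^2 + \mathfrak{F}'\,|\sqrt{-1}\partial\bar\partial\rho|_h/\rho_i\le\beta^2\mathfrak{F}'' + \beta\mathfrak{F}'$, and the Chern Hessian $\wh\nabla\partial F$ obeys the same bound. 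Multiplying through by $e^{-2F}=e^{-2\mathfrak{F}}$ and invoking Lemma~\ref{l-exhaustion-1}(ii) (that $e^{-k\mathfrak{F}}\mathfrak{F}^{(k)}$ is bounded, say by $C_0$) yields $e^{-F}|\partial F|_h\le C_0\beta$, $e^{-2F}|\partial F|_h^2\le C_0^2\beta^2$, and $e^{-2F}\big(|\sqrt{-1}\partial\bar\partial F|_h + |\wh\nabla\partial F|_h\big)\le C_0\beta(1+\beta)$.

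Then I would substitute, tracking the conformal weights $|T_{0,i}|_{h_i}=e^{-F}|\,\cdot\,|_h$, $|\wh T^{(i)}|_{h_i}=e^{-F}|\,\cdot\,|_h$ and $|\wh\nabla^{(i)}_{\dbar}T_{0,i}|_{h_i}=|\wh\nabla^{(i)}_{\dbar}\wh T^{(i)}|_{h_i}=e^{-2F}|\,\cdot\,|_h$ (immediate from $h_i=e^{2F}h$ and the transformation formulas). In each of (i)--(iv) the ``main term'' is $e^{-2F}$ times the relevant hypothesis bound ($|T_0|_h^2\le\beta$, $|\wh T|_h\le\a$, $|\wh\nabla_{\dbar}T_0|_h\le\beta$, $|\wh\nabla_{\dbar}\wh T|_h\le\beta$, $H_h\le\kappa$ and $\tfrac{n+1}{2n}\kappa + |\wh\nabla_{\dbar}\wh T|_h\le\kappa_0$), hence at worst that bound since $e^{-2F}\le1$ and $\kappa_0\ge0$, while the leftover terms are absorbed into $c(n,\a)\beta(1+\beta)$ using the preceding estimates and hypotheses (i)--(iii) — consistent with the form of $\mathfrak{s}$ in Theorem~\ref{t-existence}. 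For (iv) this reads
\[
\tfrac{n+1}{2n}\kappa_i(x) + |\wh\nabla^{(i)}_{\dbar}\wh T^{(i)}|_{h_i}(x)\le e^{-2F}\big(\tfrac{n+1}{2n}\kappa(x) + |\wh\nabla_{\dbar}\wh T|_h(x)\big) + c\beta(1+\beta)\le\kappa_0 + c\beta(1+\beta),
\]
and (i)--(iii) go through in the same way.

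The main obstacle is the last term generated in (iii): the cross term $2\partial_iF\,\wh\nabla_{\bar m}(g_0)_{j\bar l}$. After the $e^{-2F}$ weight it is $\lesssim e^{-2F}\,|\partial F|_h\,|\wh\nabla g_0|_h$, and since $|\wh\nabla g_0|_h$ is only $O(\rho)$ with $\rho$ comparable to $\rho_i$ on the support of $dF$, the blunt inequalities above only give a bound growing with $\rho_i$. Obtaining a bound of the required form $c(n,\a)\beta(1+\beta)$, independent of $i$, is precisely what hypothesis (iii) of Theorem~\ref{t-existence} (equivalently condition (iv) of Theorem~\ref{t-intro-2}) is there for, and it needs the precise profile of $\mathfrak{F}$ (Lemma~\ref{l-exhaustion-1}) weighed against the conformal factor rather than the crude estimates. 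One should also verify that none of the curvature or torsion manipulations reinstates an uncontrolled positive power of $e^{F}$; carrying out the computation pointwise in a local frame adapted to $h$ (Chern connection of $h$ vanishing at the point) keeps the expansions short and makes cancellations — such as the $\partial_{\bar m}e^{2F}$ term against the conformal part $2\delta^p_l\partial_{\bar m}F$ of the connection — transparent.
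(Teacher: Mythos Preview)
Your approach is the paper's approach: write the conformal transformation laws for torsion, connection and curvature, convert them into bounds using Lemma~\ref{l-exhaustion-1}(ii) on $e^{-k\mathfrak{F}}\mathfrak{F}^{(k)}$ together with hypotheses (a) and (i)--(iii) of Theorem~\ref{t-existence}, and read off (i)--(iv). The paper carries out exactly the expansions you record, including the identity $\wh\nabla^{(i)}_{\bar l}(g_{0,i})_{k\bar q}=e^{2F}\wh\nabla_{\bar l}(g_0)_{k\bar q}$, and the holomorphic sectional curvature computation $\hat R^{(i)}_{1\bar11\bar1}=e^{2F}\hat R_{1\bar11\bar1}-2(h_i)_{1\bar1}F_{1\bar1}$; your formulas agree with theirs.

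Where you are more careful than the paper is in isolating the cross term $2\partial_iF\cdot\wh\nabla_{\bar m}(g_0)_{j\bar l}$ in (iii). The paper writes out the full expansion (your term appears as $2\mathfrak{F}'\rho_0^{-1}\rho_p\,\wh\nabla^{(i)}_{\bar l}g_{k\bar q}$) and then simply asserts ``We may further infer that (iii) is true'' without estimating it. So your flag is not something the paper resolves and you miss --- the paper gives no additional argument for this term. Your diagnosis that the blunt bound is $e^{-2\mathfrak{F}}\mathfrak{F}'\beta^2\rho_i$ is correct, and your hoped-for rescue via ``the precise profile of $\mathfrak{F}$'' does not obviously go through: $\sup_s e^{-2\mathfrak{F}(s)}\mathfrak{F}'(s)$ is a fixed positive constant, so the factor $\rho_i$ persists under the stated linear-growth hypotheses. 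In short, your proposal matches the paper's proof, and the one place you hesitate is a place where the paper is itself silent.
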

\begin{proof} In the following, $c_i$ will denote a positive constant depending only on $n, \a$. Note that we still have  $\a^{-1}g_{0,i}\leq h_i\leq \a g_{0,i};$. For notational convenience, we will denote $g=g_{0,i}$, $\tilde g=g_0$, $\hat h=h_i$ in the proof.

(i)
\begin{equation}
\begin{split}
(T_{0,i})_{p k\bar q }
&=\partial_p (e^{2F} g_{k\bar q})-\partial_k (e^{2F} g_{p\bar q})\\
&=2e^{2F} (F_p g_{k\bar q} -F_k g_{p\bar q}) +e^{2F}(T_{g})_{pk\bar q}\\
&=2(F_p \tilde g_{k\bar q}-F_k \tilde g_{p\bar q})+e^{2F} (T_{g})_{pk\bar q}\\
&= 2\rho_0^{-1}\mathfrak{F}' \left( \rho_p \tilde g_{k\bar q}-\rho_k \tilde g_{p\bar q}\right)+e^{2F} (T_{g})_{pk\bar q}.
\end{split}
\end{equation}

Hence
\bee
|T_{0,i}|^2_{h_i}\le c_1\beta.
\eee
This proves (i). The proof of (ii) is similar.

(iii)
\begin{equation}
\begin{split}
\widehat\nabla_{\bar l}^{(i)}(T_{0,i})_{p k\bar q }
&=\wh\nabla_{\bar l}^{(i)}\left(2(F_p  g_{k\bar q}-F_k  g_{p\bar q})+e^{2F} (T_{\tilde g})_{pk\bar q}\right)\\
&=2(F_{p\bar l}  g_{k\bar q} -F_{k\bar l} g_{p\bar q})+2 \left(F_p \wh \nabla^{(i)}_{\bar l}  g_{k\bar q}-F_k \wh\nabla^{(i)}_{\bar l}  g_{p\bar q} \right)\\
&\quad +2e^{2F} F_{\bar l} (T_{\tilde g})_{pk\bar q}+e^{2F}\wh\nabla^{(i)}_{\bar l} (T_{\tilde g})_{pk\bar q}\\
&=2\rho_0^{-1}\mathfrak{F}'(\rho_{p\bar l} g_{k\bar q} -\rho_{k\bar l} g_{p\bar q})
+2\rho_0^{-2}\mathfrak{F}''(\rho_{p}\rho_{\bar l} g_{k\bar q}-\rho_{k}\rho_{\bar l} g_{p\bar q})\\
&\quad +2\mathfrak{F}'\rho_0^{-1} \left(\rho_p \wh \nabla^{(i)}_{\bar l}  g_{k\bar q}-\rho_k \wh\nabla^{(i)}_{\bar l}  g_{p\bar q} \right)\\
&\quad +2e^{2F}\rho_0^{-1} \mathfrak{F}' \rho_{\bar l} (T_{\tilde g})_{pk\bar q}+e^{2F}\wh\nabla_{\bar l} (T_{\tilde g})_{pk\bar q}
\end{split}
\end{equation}
Using the fact that
$$(\wh\Gamma^{(i)} -\wh\Gamma)_{pq}^l=2F_p \delta^l_q=2\rho_0^{-1}\mathfrak{F}'\rho_p \delta^l_q$$
and hence
\begin{equation}
\begin{split}
\wh\nabla^{(i)}_{\bar l} g_{k\bar q}&=-2F_{\bar l} g_{k\bar q}+(\wh\nabla_{\bar l}^{(i)}-\wh\nabla_{\bar l})g_{k\bar q}+e^{2F} \wh\nabla_{\bar l} \tilde g_{k\bar q}\\
&=-2\rho_0^{-1}\mathfrak{F}'\rho_{\bar l} g_{k\bar q}+2\rho_0^{-1}\mathfrak{F}'\rho_{\bar l} g_{k\bar q}+e^{2F} \wh\nabla_{\bar l} \tilde g_{k\bar q}
\end{split}
\end{equation}
We may further infer that (iii) is true.

Now we examine the holomorphic sectional curvature after conformal change. Let $e_1\in T^{1,0}U_R$ be such that $|e_1|_{h_i}=1$, $|e_1|_h =e^{-F}$. Let $\kappa(x)$ be the  upper bound of the holomorphic sectional curvature of $h$ at $x$
\begin{equation}
\begin{split}
\hat R_{1\bar11\bar1}
&= -\partial_1 \partial_{\bar 1} (e^{2F}h_{1\bar 1}) +e^{-2F}h^{p\bar l} \partial_1 (e^{2F}h_{1\bar l})\cdot \partial_{\bar 1} (e^{2F}h_{p\bar 1})\\
&=-\partial_1 (e^{2F} \partial_{\bar 1} h_{1\bar 1}+2e^{2F} h_{1\bar 1}F_{\bar 1})\\
&\quad
+ e^{-2F} h^{p\bar l}\left(e^{2F} \partial_1h_{1\bar l} +2\hat h_{1\bar l}F_1 \right)\left(e^{2F}\partial_{\bar 1} h_{p\bar 1}+2\hat h_{p\bar 1} F_{\bar 1}\right)\\
&=e^{2F} \tilde R_{1\bar11\bar1}-2\hat h_{1\bar1} F_{1\bar1}
\\
&\leq e^{-2F}\kappa -2F_{1\bar1}\\
&\leq e^{-2F}\kappa+c_3(\beta+\beta^2) .
\end{split}
\end{equation}
Estimate $|\wh\nabla^{(i)}_{\bar\partial} T_i|_{h_i}$ in a similar way as above, we may conclude that
$$
\frac{n+1}{2n}\kappa_i(x)+|\wh\nabla^{(i)}_{\dbar} T_i |_{h_i}\le e^{-2F}\kappa_0
+c_3 \beta(1+ \beta ).
$$
From this (iv) is true.

\end{proof}

Now we are able to construct a solution of the Chern Ricci flow on $M$.
\begin{proof}[Proof of Theorem \ref{t-existence}]
For each sufficiently large $\rho_i$, $(U_{\rho_i},g_{0,i})$ has bounded geometry by Lemma \ref{l-boundedgeom}. By Lemma \ref{conformal-t-h}, using the notation in the lemma, we have for any $\e>0$:

$$
\frac{n+1}{2n}\kappa_i(x)+|\bar\nabla_{\dbar} T_i |_{h_i}\le \kappa_0
+c(\beta+(1+\e^{-1})\beta^2)+\e \gamma=:\kappa_{0,i}.
 $$
 $$
 \mathfrak{s}_i=\sup_M\lf(|T_{0,i}|^2_{h_i}+|T_{0,i}|_{h_i}|\wh T_i|_{h_i}+|\wh \nabla^{(i)}_{\dbar}T_{0,i}|_{h_i}+\kappa_{0,i}\ri)
 $$
 Then
 \bee
 \mathfrak{s}_i\le \kappa_0+c_1\beta(1+\beta) =:\mathfrak{s}.
 \eee
 By Lemma \ref{l-bgeom-1}, there is a solution $g_i(t)$ on $U_{\rho_i}\times[0,S)$ with initial metric $g_{0,i}$ where
 $$
 S=\frac1{3c_1(n\a+1)^3\mathfrak{s}}
 $$
 for some constant $c_1=c_1(n)$. Moreover, $g_i$ is uniformly equivalent to $g_{0,i}$ and
 \be\label{e-tr-1}
 \tr_{g_i}h_i\le v(t)-1
 \ee
 on $U_{\rho_i}\times[0,S)$
where
$$
v(t)= \lf(\frac1{(n\a+1)^{-3}-3c_1 \mathfrak{s}t}\ri)^\frac13.
$$

Fix any compact subset $K\subset M$. Then for sufficiently large $i$, $g_i(t)$ is a solution of the Chern Ricci flow defined on $U_{\rho_i}\supset U_{2r}\supset U_r\supset K$ for some large $r>0$. By Lemma \ref{locR}, for any $(x,t)\in K\times [0,S]$,
$$R_{g_i(t)} \geq -\max\left\{C_n \beta^{-1}  [(r_i-r)^{-2}+1],\sup_{\rho(y)<2r}R_-(y,0) \right\}$$
where we have used the fact that $g_i(0)=g_0$ on $U_r$ for sufficiently large $\rho_i$. In particular, it is bounded from below uniformly. Since
\begin{align*}
\frac{\partial}{\partial t} \left(\log \frac{\det g_i(t)}{\det h}\right)
&=-R_{g_i(t)}\leq C(n,K,\alpha).
\end{align*}
Therefore, on $K\times [0,S]$,
$$\alpha h\leq g(t)\leq e^{C t}\beta^{-n}h.$$
By the local estimate of the Chern Ricci flow \cite{ShermanWeinkove2013}, for any $k\in \mathbb{N}$, there is $C(n,k,g_0,h,\beta)$ such that for any $(x,t)\in K\times [0,S]$,
$$|\hat\nabla^k g_i(t)|_h\leq C(n,k,g_0,h,\beta).$$
By taking diagonal subsequence and using Arzel\`a-Ascoli theorem, we may obtain a limiting solution of $g(t)$ defined on $M\times [0,S)$. The conclusion on $\tr_gh$ follows from  \eqref{e-tr-1}. This completes the proof of the theorem.
\end{proof}

Next we want to apply Theorem \ref{t-existence} to obtain longtime solution for \KR flow.
\begin{thm}\label{t-longtimeKRF}
Let $(M,g_0)$ be a complete \K manifold and $h$ is a fixed complete Hermitian metric on $M$ such that the following holds.
\begin{enumerate}
\item[  (i)] There exists smooth exhaustion $\rho\geq 1$ such that
$$\limsup_{\rho\rightarrow \infty}\left(\frac{|\partial \rho|_h}{\rho}+\frac{|\sqrt{-1}\partial\bar\partial \rho|_h}{\rho} \right)=0;$$
\item[\bf (ii)]  and the holomorphic sectional curvature of $h$ and torsion of $h$ satisfy
$$H_h+\frac{2n}{n+1}|\wh\nabla_{\bar\partial} \wh T|_h\leq 0.$$
\item[ (iii)]  $\exists \a>1$ such that on $M$,
$\a^{-1}g_0\leq h\leq \a g_0$, $|\wh T|_h\leq \a$;
\item [(iv)]
$\limsup_{\rho\rightarrow \infty}\rho^{-1}|\wh\nabla g_0|_h =0;$

\end{enumerate}
Then there is $\beta(n,\a)>0$ such that the \K Ricci flow has a complete solution $g(t)$ on $M\times [0,+\infty)$ with $g(0)=g_0$ and satisfies
 $$\beta h\leq g(t)$$ on $M\times [0,+\infty)$.
\end{thm}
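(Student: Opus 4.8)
The plan is to follow the proof of Theorem \ref{t-existence}, taking advantage of the two extra features of the present situation: $g_0$ is a \K metric, so its torsion $T_0$ vanishes identically, and the curvature bound in (ii) holds with $\kappa_0=0$.

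First I would run the same exhaustion and conformal approximation as in Section \ref{existence}: fix $\tau\in(0,\tfrac18)$, take a sequence $\rho_i\uparrow\infty$, and set $F^{(i)}=\mathfrak{F}(\rho/\rho_i)$, $g_{0,i}=e^{2F^{(i)}}g_0$ on $U_{\rho_i}$ and $h_i=e^{2F^{(i)}}h$. By Lemma \ref{l-boundedgeom}, $(U_{\rho_i},g_{0,i})$ has bounded geometry of infinite order; moreover $\a^{-1}g_{0,i}\le h_i\le\a g_{0,i}$, and $g_{0,i}=g_0$, $h_i=h$ on the large set $\{\rho<(1-\tau+\tau^2)\rho_i\}$. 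Hence outside the collar $C_i:=\{(1-\tau+\tau^2)\rho_i<\rho<\rho_i\}$ we have $T_{0,i}=T_0\equiv0$ and $\tfrac{n+1}{2n}\kappa_i+|\wh\nabla^{(i)}_{\dbar}\wh T_i|_{h_i}=\tfrac{n+1}{2n}H_h+|\wh\nabla_{\dbar}\wh T|_h\le0$ by (ii), while inside $C_i$, redoing the computations of Lemma \ref{conformal-t-h} with hypotheses (i) and (iv) used to bound $|\p\rho|_h$, $|\ii\p\dbar\rho|_h$ and $|\wh\nabla g_0|_h$ by $\e(\rho)\,\rho$ (with $\e(\rho)\to0$ as $\rho\to\infty$) and with the bounds on $e^{-kF}\mathfrak{F}^{(k)}$ from Lemma \ref{l-exhaustion-1}(ii), the constant $\beta$ appearing there is effectively replaced by $\e_i:=\e\big((1-\tau+\tau^2)\rho_i\big)\to 0$ and the $|T_0|_h$-, $|\wh\nabla_{\dbar}T_0|_h$- and $\kappa_0$-terms all drop out. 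This gives
$$\mathfrak{s}_i:=\sup_{U_{\rho_i}}\lf(|T_{0,i}|^2_{h_i}+|T_{0,i}|_{h_i}|\wh T_i|_{h_i}+|\wh\nabla^{(i)}_{\dbar}T_{0,i}|_{h_i}+\kappa_{0,i}\ri)\le c(n,\a)\,\e_i(1+\e_i)\longrightarrow 0\quad(i\to\infty),$$
where $\kappa_{0,i}$ denotes the effective upper bound for $\tfrac{n+1}{2n}\kappa_i+|\wh\nabla^{(i)}_{\dbar}\wh T_i|_{h_i}$ on $U_{\rho_i}$.

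Granting this, Lemma \ref{l-bgeom-1} applied to $(U_{\rho_i},g_{0,i})$ with background $h_i$ yields a solution $g_i(t)$ of the Chern-Ricci flow on $U_{\rho_i}\times[0,S_i)$ with
$$S_i\ge\frac{1}{3c(n)(n\a+1)^3\mathfrak{s}_i}\longrightarrow\infty,\qquad \tr_{g_i}h_i\le v_i(t)-1,\quad v_i(t)=\lf((n\a+1)^{-3}-3c(n)\,\mathfrak{s}_i\,t\ri)^{-1/3}.$$
Fix a compact $K\subset M$ and $T>0$. For all large $i$ we have $S_i>T$ and $g_{0,i}=g_0$, $h_i=h$ on a neighbourhood of $K$, so on $K\times[0,T]$ the trace bound reads $\tr_{g_i(t)}h\le v_i(T)-1$; since $v_i(T)\to n\a+1$ this forces $g_i(t)\ge\beta h$ on $K\times[0,T]$ for the fixed constant $\beta=(n\a+1)^{-1}=\beta(n,\a)>0$. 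With this lower bound, Lemma \ref{locR} bounds the Chern scalar curvature $R_{g_i(t)}$ from below on $K$, hence $\tfrac{\p}{\p t}\log\tfrac{\det g_i(t)}{\det h}=-R_{g_i(t)}$ is bounded above there, and combined with $g_i(t)\ge\beta h$ this gives $\beta h\le g_i(t)\le C(n,\a,K,T)\,h$ on $K\times[0,T]$. The local higher order estimates for the Chern-Ricci flow \cite{ShermanWeinkove2013} then give $|\wh\nabla^k g_i(t)|_h\le C(n,k,\a,K,T)$ on a slightly smaller compact set, uniformly in $i$.

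Finally, a diagonal subsequence together with the Arzel\`a--Ascoli theorem produces a limiting solution $g(t)$ of the Chern-Ricci flow on $M\times[0,+\infty)$ with $g(0)=g_0$ and $\beta h\le g(t)$; since $g_0$ is a \K metric, the Chern-Ricci flow is the \KR flow and the \K condition is preserved, so each $g(t)$ is a \K metric, and $\beta h\le g(t)$ together with completeness of $h$ forces $g(t)$ complete. The main obstacle is the step establishing $\mathfrak{s}_i\to 0$: it requires that the torsion of $g_{0,i}$ and the effective curvature bound $\kappa_{0,i}$ vanish (resp. be $\le 0$) away from $C_i$ and be dominated inside $C_i$ by the decay rates in (i) and (iv), with constants depending only on $n$ and $\a$; the rest is a compactness argument essentially identical to the proof of Theorem \ref{t-existence}.
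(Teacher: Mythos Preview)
Your proposal is correct and takes essentially the same approach as the paper. The paper's proof simply invokes Theorem \ref{t-existence} as a black box, observing that assumptions (i)--(iv) together with $T_0\equiv 0$ and $\kappa_0=0$ allow the parameter $\beta$ there to be taken arbitrarily small (so that $\mathfrak{s}\to 0$, $S\to\infty$, and $\tr_g h\le c(n,\a)$), and then finishes with Lemma \ref{locR} and compactness exactly as you do; you have unrolled that black box by working directly with the conformal approximations on $U_{\rho_i}$ and verifying $\mathfrak{s}_i\to 0$, which is the same content made explicit.
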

\begin{proof}
By Theorem \ref{t-existence} and the assumptions, one can apply the theorem with $\beta$ arbitrarily small. Hence one can find solution $g_i(t)$ to the Chern-Ricci flow with with $g_i(0)=g_0$ on $M\times[0,T_i]$ with $T_i\to\infty$. Moreover, $\tr_gh\le c(n,\a)$. Using  the local estimate of scalar curvature in Lemma \ref{locR} as in the proof of Theorem \ref{t-existence}, the results follow.
\end{proof}

\section{Existence  of the \K Einstein metric}\label{LTC}

{ In this section, we discuss the existence  the \K Einstein metric on $M$ via the \KR flow. We have the following:}

\begin{thm}\label{main}
Suppose there is a complete Hermitian metric $h$ on $M$ compatible with the complex structure $J$ such that
\begin{align}\label{Hneg}
H_h+\frac{2n}{n+1}|\wh\nabla_{\dbar} \wh T|_h\leq -k
\end{align}
for some $k>0$. Then any longtime solution of normalized \KR flow $g(t)$ will converge uniformly in $C^\infty$ to a \K Einstein metric  $g_\infty=-\Ric(g_\infty)$. {In particular, there is no Ricci flat \K metric on $M$ compatible with the same complex structure $J$.}
\end{thm}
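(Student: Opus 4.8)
The plan is to trap a longtime solution $g(t)$ of the normalized \KR flow uniformly between two fixed positive multiples of the complete metric $h$, and then to show that $g(t)$ converges to a fixed point of the normalized flow; the metric $h$ will enter only through Royden's Lemma~\ref{royden}, which converts the hypothesis \eqref{Hneg} into a differential inequality for $\Lambda:=\tr_{g(t)}h$. \textbf{Step 1 (non-collapsing via $h$).} Since $g(t)$ is \K, its torsion $T_0$ vanishes, so in Lemma~\ref{l-tracee} the term $(II)$ is zero and $(I)\le0$, leaving $\heat\Lambda\le g^{i\bar j}g^{k\bar l}\wh R_{i\bar jk\bar l}$ for the unnormalized Chern-Ricci flow. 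By \eqref{Hneg}, $\kappa(x):=-k-\tfrac{2n}{n+1}|\wh\nabla_{\dbar}\wh T|_h(x)$ is a pointwise upper bound for $H_h$, with $\tfrac{n+1}{2n}\kappa+|\wh\nabla_{\dbar}\wh T|_h\equiv-\tfrac{(n+1)k}{2n}$ and $\kappa\le-k=:\kappa_0<0$, so Lemma~\ref{royden} yields
\be
g^{i\bar j}g^{k\bar l}\wh R_{i\bar jk\bar l}\le -\tfrac{(n+1)k}{2n}\Lambda^2-\tfrac k2\lf[-\tfrac1n\Lambda^2+g^{i\bar j}g^{k\bar l}h_{k\bar j}h_{i\bar l}\ri]\le-\tfrac{(n+1)k}{2n}\Lambda^2,
\ee
using $g^{i\bar j}g^{k\bar l}h_{k\bar j}h_{i\bar l}\ge\tfrac1n\Lambda^2$ (Cauchy--Schwarz) and $k>0$. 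Passing to the normalized flow only adds $+\Lambda$, so $\heat\Lambda\le\Lambda-c_0\Lambda^2$ with $c_0=\tfrac{(n+1)k}{2n}>0$. Comparing with the solution $\phi(t)=\big(c_0(1-e^{-t})\big)^{-1}$ of $\phi'=\phi-c_0\phi^2$, $\phi(0^+)=+\infty$, via a (localized) maximum principle of the type of Lemma~\ref{max} and the proof of Lemma~\ref{locR}, valid because $h$ is complete, one gets $\Lambda(x,t)\le\big(c_0(1-e^{-t})\big)^{-1}$ for all $t>0$; in particular $g(t)\ge c_1h$ on $M\times[1,\infty)$ for a uniform $c_1>0$. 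This bound already gives the last assertion: if $g_{RF}$ were a complete Ricci flat \K metric, then $g(t)=e^{-t}g_{RF}$ would be a longtime normalized solution with $\tr_{g(t)}h=e^{t}\,\tr_{g_{RF}}h\to+\infty$, contradicting the bound.

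\textbf{Step 2 (two-sided bound and regularity).} With $g(t)\ge c_1h$ in hand, the local lower estimate for the Chern scalar curvature (Lemma~\ref{locR}, now usable with a lower bound in terms of $h$) shows that $R_{g(t)}$ is uniformly bounded below on compact sets for $t\ge1$. Inserting this into $\partial_t\log\det g(t)=-R-n$ controls the evolving volume form; since every eigenvalue of $g(t)$ relative to $h$ is $\ge c_1$ while their product is controlled, one deduces a uniform two-sided bound $c_1h\le g(t)\le c_2h$ on $M\times[1,\infty)$. With uniform equivalence available, the interior estimates for the \KR flow (cf.~\cite{ShermanWeinkove2013}, as used in the proof of Theorem~\ref{t-existence}) then give uniform $C^k_{loc}$ bounds on $g(t)$ for $t\ge2$ and every $k$.

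\textbf{Step 3 (convergence to the \K Einstein metric).} By Lemma~\ref{scalar} together with the normalization, $\heat R=|\Ric|^2+R\ge\tfrac1n R^2+R$; with the now-legitimate lower bound on $R$, an ODE comparison whose solutions tend to $-n$ gives $\liminf_{t\to\infty}R_{g(t)}\ge-n$. On the other hand the volume-form bound forces $\int^{\infty}(R+n)\,dt$ to converge pointwise, and combined with the uniform Lipschitz control of $R$ in $t$ from Step 2 this upgrades to $R_{g(t)}\to-n$ locally uniformly. Since $|\Ric+g|^2=|\Ric|^2+2R+n$ and $|\Ric-\tfrac Rn g|^2=|\Ric|^2-\tfrac1n R^2$, tracking the evolution of $R$ then yields $\Ric(g(t))+g(t)\to0$ locally uniformly, i.e.\ $g(t)$ is asymptotically a fixed point of the normalized flow. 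By the uniform $C^k_{loc}$ bounds and the Arzel\`a--Ascoli theorem, subsequential limits $g_\infty$ exist, are complete (from $c_1h\le g_\infty\le c_2h$), and satisfy $\Ric(g_\infty)=-g_\infty$; by uniqueness of the complete \K Einstein metric with negative scalar curvature (see \cite{WuYau2017}) the whole flow converges, and the uniformity of the estimates makes the convergence uniform in $C^\infty$.

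\textbf{Expected main obstacle.} The genuinely new input is the non-collapsing of Step 1, obtained from Royden's lemma and the \K condition. The technical core is Step 2 --- the bootstrap between the Chern scalar curvature and the evolving volume form, and the justification of the maximum and minimum principles on the complete noncompact manifold --- together with the passage from local to global convergence in Step 3.
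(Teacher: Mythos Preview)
Your differential inequality in Step~1 and the overall architecture are the right ones, but the maximum principle you invoke is not available under the hypotheses of Theorem~\ref{main}. Both Lemma~\ref{max} and Lemma~\ref{locR} require condition~{\bf(a1)}, namely a smooth exhaustion function $\rho$ on $(M,h)$ with $|\partial\rho|_h^2+|\sqrt{-1}\partial\bar\partial\rho|_h$ bounded. In Theorem~\ref{main} the metric $h$ is only assumed complete with the curvature/torsion bound \eqref{Hneg}; completeness alone gives you neither such a $\rho$ nor a useful Laplacian comparison for $d_h$ (you would need a Ricci or bisectional lower bound, which you do not have). So the phrase ``valid because $h$ is complete'' does not justify the comparison with $\phi(t)=\big(c_0(1-e^{-t})\big)^{-1}$, and the same issue recurs when you call Lemma~\ref{locR} in Step~2. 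The paper flags exactly this point (``Here we do not have a good exhaustion function for $h$'') and replaces the $h$-based barriers by Perelman--Chen cutoffs built from the \emph{evolving} distance $d_t(\cdot,x_0)$ of $g(t)$, proving the abstract Lemma~\ref{l-Chen}: if $\heat Q\le -\alpha Q^2+\beta$ then $tQ\le (1+\sqrt{1+4\alpha\beta T^2})/(2\alpha)$. Applying this once to $Q=\Lambda$ (with $\beta=0$) gives $\Lambda\le 1/(2c_1kt)$ along the unnormalized flow, and a second time to a smoothing of $R_-$ yields $tR\ge -n$; both steps dispense with any structure on $h$.

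Your Step~3 also diverges from the paper and is underspecified. The paper passes to the normalized flow, introduces the potential $\wt\phi$ solving \eqref{e-tphi-2}, and uses the monotonicity of $\wt\phi'+\wt\phi$ and of $e^s\wt\phi$ (both consequences of $tR\ge -n$) together with the $C^\ell$ bounds to force $\wt\phi'\to 0$ and hence convergence of $\wt\omega(s)$ to a metric satisfying $\Ric(\wt\omega_\infty)=-\wt\omega_\infty$. Your route via $R\to -n$ and then $\Ric+g\to 0$ is plausible, but the step ``tracking the evolution of $R$ then yields $\Ric+g\to 0$'' is not a one-liner: from $\heat R=|\Ric|^2+R$ and $R\to -n$ you cannot read off $|\Ric|^2\to n$ pointwise without further integral/parabolic arguments, and the uniqueness citation at the end settles subsequential limits but not, by itself, convergence of the full flow. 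If you want to salvage Step~3 in your style you will need an honest argument (e.g.\ integrate $|\Ric+g|^2$ against a cutoff and use the $C^k$ bounds); the potential-function route is cleaner.
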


{ Combining with Theorem \ref{t-longtimeKRF}, we have   the following results which are some generalization of    the result by Wu-Yau \cite{WuYau2017}:}

\begin{cor}\label{c-Wu-Yau} Let $(M^n,g_0)$ be a complete noncompact \K manifold with holomorphic sectional curvature bounded above by a  negative constant. Suppose $M$  supports an exhaustion function with uniformly bounded gradient and uniformly bounded Hessian, which is the case if $M$  has bounded curvature. Then $M^n$ support a unique K\"ahler Einstein metric with negative scalar curvature.
\end{cor}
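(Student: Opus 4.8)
The plan is to deduce Corollary~\ref{c-Wu-Yau} by combining Theorem~\ref{t-longtimeKRF} with Theorem~\ref{main}, taking as reference Hermitian metric $h=g_0$ itself. First I would verify that, with this choice, all four hypotheses of Theorem~\ref{t-longtimeKRF} are satisfied. Since $g_0$ is K\"ahler, its Chern connection is torsion free, so $\wh T\equiv 0$ and hypothesis (ii) reduces to $H_{g_0}\le 0$, which follows from $H_{g_0}\le -k<0$. Hypothesis (iii) holds trivially with $\a=1$, together with $|\wh T|_h=0\le1$. Hypothesis (iv) is automatic, because the Chern connection of $h=g_0$ is $g_0$-compatible, so $\wh\nabla g_0\equiv 0$. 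Finally, hypothesis (i) is precisely the assumed existence of an exhaustion function $\rho\ge1$ with $|\p\rho|_h$ and $|\ii\ddbar\rho|_h$ uniformly bounded: dividing those bounds by $\rho\to\infty$ yields the required $\limsup=0$. As recalled in the introduction, such a $\rho$ exists whenever $g_0$ has bounded curvature by \cite{Shi1989a,Tam2010}, which justifies the last clause of the statement. (Note that $H_{g_0}\le -k$ alone does not force bounded curvature, which is exactly why the extra exhaustion hypothesis is needed.)

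With the hypotheses in place, Theorem~\ref{t-longtimeKRF} produces a complete solution $g(t)$ of the unnormalized K\"ahler--Ricci flow on $M\times[0,\infty)$ with $g(0)=g_0$ and $\b h\le g(t)$ for some $\b=\b(n,\a)>0$; in particular $g(t)$ remains a genuine complete metric for all time. I would then pass to the normalized flow by the standard rescaling: put $t(s)=e^s-1$ and $\wt g(s)=e^{-s}g(t(s))$. Using that $\Ric$ is invariant under constant rescaling of the metric, one checks $\p_s\wt g=-\Ric(\wt g)-\wt g$ on $s\in[0,\infty)$ with $\wt g(0)=g_0$, and each $\wt g(s)$, being a positive multiple of a complete metric, is complete. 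Hence $\wt g(s)$ is a longtime solution of the normalized K\"ahler--Ricci flow.

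Now I would invoke Theorem~\ref{main} with the same $h=g_0$: condition \eqref{Hneg} of that theorem holds since $H_{g_0}+\frac{2n}{n+1}|\wh\nabla_{\dbar}\wh T|_{g_0}=H_{g_0}\le -k$ with $k>0$. Therefore $\wt g(s)$ converges uniformly in $C^\infty$ to a K\"ahler metric $g_\infty$ with $g_\infty=-\Ric(g_\infty)$, which in particular has negative (constant) scalar curvature. This establishes existence of a K\"ahler--Einstein metric with negative scalar curvature on $M$. Uniqueness of such a metric is the fact recalled immediately after Theorem~\ref{t-intro-1}; see \cite[p.~30]{WuYau2017}.

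The substantive content—longtime existence of the flow under possibly unbounded curvature, and the convergence of the normalized flow to a K\"ahler--Einstein metric—is carried entirely by Theorems~\ref{t-longtimeKRF} and \ref{main}, so at the level of the corollary the work is just the bookkeeping above. The only point that deserves a moment of care is the reduction from the unnormalized to the normalized flow, namely that longtime existence of $g(t)$ for $t\in[0,\infty)$ translates into longtime existence of $\wt g(s)$ for $s=\log(1+t)\in[0,\infty)$; beyond that no obstacle arises.
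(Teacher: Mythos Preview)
Your proposal is correct and is exactly the argument the paper has in mind: the corollary is stated immediately after Theorem~\ref{main} with the remark ``Combining with Theorem~\ref{t-longtimeKRF}, we have the following results,'' and no further proof is given. The only cosmetic quibble is that Theorem~\ref{t-longtimeKRF} formally asks for $\a>1$, so instead of $\a=1$ you should take any $\a>1$; otherwise your bookkeeping (in particular the passage from the unnormalized to the normalized flow and the vanishing of the torsion terms when $h=g_0$) matches the intended proof.
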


\begin{cor}\label{c-Xu} Let $(M^n,g_0)$ be a complete noncompact \K manifold with complex dimension $n$. Suppose there is $K_1, r,A_0,r>0,p>n$ such that for all $x\in M$, $f\in C^\infty_0 (B_{g_0}(x,4r))$,
\begin{equation}
\left\{ \begin{array}{r c l}
\displaystyle H_{g_0} &\le\kappa_0< 0;\\
   \displaystyle {\avint_{B_{g_0}(x,r)}|Rm(g_0)|^p \, d\mu_{g_0}}&\leq& K_1;\\
    \displaystyle\avint_{B_{g_0}(x,4r)}|f|^\frac{2n}{n-1}\, d\mu_{g_0}&\leq& A_0 r^2 \avint_{B_{g_0}(x,4r)}|\nabla f|^2\, d\mu_{g_0}.
\end{array}
\right.
\end{equation}
Then $M$ supports a unique K\"ahler Einstein metric with negative scalar curvature.
\end{cor}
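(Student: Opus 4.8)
The strategy is to reduce Corollary \ref{c-Xu} to Corollary \ref{c-Wu-Yau} (or more precisely, to the combination of Theorem \ref{main} and Theorem \ref{t-longtimeKRF}) by exhibiting an auxiliary Hermitian metric $h$ satisfying the hypotheses of Theorem \ref{t-longtimeKRF}. Since $(M,g_0)$ is already \K, the natural candidate is simply $h=g_0$ itself. Then conditions (ii) and (iii) of Theorem \ref{t-longtimeKRF} are immediate: the torsion $\wh T$ of $h=g_0$ vanishes, so $H_h+\frac{2n}{n+1}|\wh\nabla_{\dbar}\wh T|_h = H_{g_0}\le\kappa_0<0$, and $\a^{-1}g_0\le h\le\a g_0$ holds trivially with $\a=1$, while condition (iv), $\limsup_{\rho\to\infty}\rho^{-1}|\wh\nabla g_0|_h=0$, is automatic since $\wh\nabla g_0\equiv 0$ for the Chern connection of $g_0$. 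So the real content is condition (i): producing a smooth exhaustion $\rho\ge 1$ with $\limsup_{\rho\to\infty}\big(\rho^{-1}|\p\rho|_{g_0}+\rho^{-1}|\ii\p\dbar\rho|_{g_0}\big)=0$.

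The exhaustion function must be built from the $L^p$-curvature bound and the uniform Sobolev inequality. The plan is: first, use the local Sobolev inequality together with the $L^p$ bound on $|Rm|$ and Moser iteration (or the de Giorgi–Nash–Moser machinery for the relevant elliptic inequality) to obtain \emph{local} geometric control — for instance, a uniform lower bound on the volume ratio of unit balls, or an $\e$-regularity statement showing that the curvature cannot concentrate too badly. This is the standard package in, e.g., the work of Tian–Wang or the references (this corollary is attributed to Xu in the introduction), where $\int |Rm|^p$ with $p>n$ together with a Sobolev inequality gives $L^\infty$ bounds on the curvature at small scales, hence bounded-geometry-type estimates on a fixed scale. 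Second, feed this local regularity into the construction of an exhaustion function with controlled gradient and Hessian: by \cite{Shi1989a,Tam2010} (and the recent \cite{Huang2018} cited after Theorem \ref{t-intro-2}), once one has uniform local control of the geometry at a fixed scale, one can glue together distance-type functions or solve local Poisson-type problems to produce $\rho$ with $|\p\rho|_{g_0}+|\ii\p\dbar\rho|_{g_0}$ bounded (not just $o(\rho)$); bounded is stronger than what (i) requires. Third, invoke Theorem \ref{t-longtimeKRF} with $h=g_0$ to obtain a longtime solution $g(t)$ of the \KR flow (note the \KR flow and Chern-Ricci flow coincide here since $g_0$ is \K, and the flow preserves the \K condition), with $\beta h\le g(t)$; then since $\kappa_0<0$ we have $H_h+\frac{2n}{n+1}|\wh\nabla_{\dbar}\wh T|_h=H_{g_0}\le\kappa_0<0$, so the hypothesis \eqref{Hneg} of Theorem \ref{main} holds with $k=-\kappa_0>0$. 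Applying Theorem \ref{main} to the normalized flow gives $C^\infty$ convergence to a \K Einstein metric $g_\infty=-\Ric(g_\infty)$, and uniqueness follows from the remark after Theorem \ref{t-intro-1} (cf. \cite[p.30]{WuYau2017}).

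The main obstacle is the second step: turning the integral curvature bound plus Sobolev inequality into the existence of a well-behaved exhaustion function. The passage from $\int_{B(x,r)}|Rm|^p\le K_1$ ($p>n$) and the Sobolev inequality to pointwise curvature bounds at a definite scale is a nontrivial elliptic estimate — one must identify the correct differential inequality satisfied by $|Rm|$ (a Simons-type inequality, available along with its nonlinear reaction term) and run Moser iteration using \emph{only} the given Sobolev constant, being careful that all constants are uniform in $x$. Once pointwise bounds on curvature at scale $r$ are in hand, the existence of $\rho$ with bounded gradient and Hessian is classical, and the rest is a direct citation of Theorems \ref{t-longtimeKRF} and \ref{main}. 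A secondary technical point is checking that the local regularity genuinely yields \emph{bounded} (not merely sublinear) gradient and Hessian for $\rho$; since (i) only demands the weaker $o(\rho)$ estimate, any softer construction suffices, which gives some room to maneuver.
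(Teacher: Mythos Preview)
Your overall strategy---take $h=g_0$ in Theorem \ref{t-longtimeKRF} so that the only nontrivial hypothesis is the existence of a good exhaustion function, then invoke Theorem \ref{main}---matches the paper's. The gap is in how you propose to build that exhaustion function.

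You plan to obtain pointwise curvature bounds on $g_0$ by Moser iteration, invoking a ``Simons-type inequality'' for $|\Rm|$. No such inequality is available for a general metric: the curvature tensor of a Riemannian (or \K) manifold does not satisfy an elliptic equation. The schematic Bochner identity reads $\Delta \Rm = \nabla^2\Ric + \Rm * \Rm$, and the $\nabla^2\Ric$ term cannot be absorbed without extra structure (Einstein, harmonic curvature, etc.). Absent that, an $L^p$ bound on $|\Rm|$ together with a Sobolev inequality gives no $L^\infty$ bound---curvature can be concentrated on arbitrarily small sets while keeping the $L^p$ norm fixed. So the elliptic route to pointwise bounds, and hence to the exhaustion function, is blocked.

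The paper's approach is parabolic. The hypotheses of the corollary are precisely those of Xu's short-time existence theorem \cite{Xu2013}: the uniform local $L^p$ curvature bound with $p>n$ together with the uniform local Sobolev inequality yield a complete Ricci flow $g(t)$ on some $[0,T]$ with $|\Rm(g(t))|\le Ct^{-a}$ for some $0<a<1$. By \cite{HuangTam2015} the flow remains \K. Fix any $t_0\in(0,T]$; then $g(t_0)$ has \emph{bounded} curvature, so by \cite{Shi1989a,Tam2010} it supports an exhaustion $\rho$ with bounded gradient and Hessian. Since $g(t_0)$ is uniformly equivalent to $g_0$, the same $\rho$ has bounded gradient and complex Hessian with respect to $g_0$, and Corollary \ref{c-Wu-Yau} applies directly to $g_0$.

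In short, the missing idea is to use the Ricci flow itself---via Xu's theorem---as the smoothing mechanism that converts the integral curvature bound into a pointwise one at positive time, rather than attempting an elliptic argument on the initial metric.
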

\begin{proof}
By \cite{Xu2013}, there is a complete short time  solution $g(t)$ to the Ricci flow with $g(t)=g_0$ such that $|\Rm(g(t))|\le Ct^{-a}$ for some $0<a<1$. By \cite{HuangTam2015}, $g(t)$ is \K for $t>0$. On the other hand for fixed $g(t)$, there is an exhaustion function $\rho$ with uniformly bounded gradient and uniformly bounded Hessian. Since $g(t)$ is uniformly equivalent to $g_0$, $\rho$ is also an exhaustion function $\rho$ with uniformly bounded gradient and uniformly bounded Hessian with respect to $g_0$. By Corollary \ref{c-Wu-Yau}, the result follows.
\end{proof}

We also want to discuss metrics which are uniformly equivalent to $g_0$ as in the previous corollaries. The following is an immediate consequence of Theorem \ref{t-longtimeKRF} and Theorem \ref{main}.

\begin{cor}\label{c-uni-equivalent} Let $(M^n,h)$ be a complete noncompact Hermitian manifold { with bounded torsion} and satisfies the conditions in Corollary  \ref{c-Wu-Yau}: The holomorphic sectional curvature of $h$ is bounded above by a negative constant, and $(M,h)$ support an exhaustion function with uniformly bounded gradient and uniformly bounded Hessian.  Let $g_0$ be a \K metric which is uniformly  equivalent to $h$ so that $|\wh \nabla g_0|_h$ is bounded, where $\wh \nabla$ is the derivative with respect to the Chern connection of $h$. Then $M$ supports a unique K\"ahler-Einstein metric with negative scalar curvature.
\end{cor}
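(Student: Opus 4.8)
The plan is to treat the corollary as a direct application of Theorem~\ref{t-longtimeKRF} (to produce a long-time flow) followed by Theorem~\ref{main} (to extract a K\"ahler--Einstein limit from its normalization). The key structural observation is that $g_0$ is assumed \K, and the Chern--Ricci flow starting from a \K metric stays \K and agrees with the \KR flow; hence the solution constructed below is genuinely a \KR flow and the two theorems may be chained.

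First I would check that the pair $(g_0,h)$ meets hypotheses (i)--(iv) of Theorem~\ref{t-longtimeKRF}. Condition (i) is immediate, since a uniform bound on $|\partial\rho|_h$ and $|\sqrt{-1}\partial\bar\partial\rho|_h$ forces $\rho^{-1}(|\partial\rho|_h+|\sqrt{-1}\partial\bar\partial\rho|_h)\to 0$ along the exhaustion. Condition (iii) is the uniform equivalence $\alpha^{-1}g_0\le h\le\alpha g_0$ together with $|\wh T|_h\le\alpha$, which holds after enlarging $\alpha$ to absorb the bounded-torsion hypothesis. Condition (iv) holds because $|\wh\nabla g_0|_h$ is bounded and $\rho\ge 1$ exhausts $M$, so $\rho^{-1}|\wh\nabla g_0|_h\to 0$. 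The one substantive point is condition (ii), namely $H_h+\frac{2n}{n+1}|\wh\nabla_{\dbar}\wh T|_h\le 0$: this is exactly where the negativity $H_h\le-k'$ must dominate the torsion derivative, so that in fact $H_h+\frac{2n}{n+1}|\wh\nabla_{\dbar}\wh T|_h\le -k$ holds for some $k>0$, which is the precise hypothesis required by Theorem~\ref{main}. Granting this, Theorem~\ref{t-longtimeKRF} yields a complete long-time solution $g(t)$ of the \KR flow on $M\times[0,\infty)$ with $g(0)=g_0$ and $\beta h\le g(t)$.

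Next I would normalize. Setting $\tilde g(t)=e^{-t}g(e^t-1)$ and using that the Chern--Ricci form is invariant under constant rescaling converts $g$ into a solution of $\partial_t\tilde g=-\Ric(\tilde g)-\tilde g$; since $e^t-1\to\infty$, the normalized flow $\tilde g(t)$ is again defined for all $t\ge0$. As $h$ satisfies the curvature--torsion condition of Theorem~\ref{main} with $k>0$, that theorem applies to the long-time solution $\tilde g(t)$ and gives uniform $C^\infty$ convergence to a metric $g_\infty$ with $g_\infty=-\Ric(g_\infty)$. Being a $C^\infty$ limit of \K metrics uniformly equivalent to the complete metric $h$, the limit $g_\infty$ is itself a complete \K metric, and $\Ric(g_\infty)=-g_\infty$ forces scalar curvature $-n<0$. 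This produces the sought \K Einstein metric with negative scalar curvature.

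Finally, uniqueness is a rigidity statement independent of the flow: a complete \K Einstein metric with negative scalar curvature is unique up to scaling, as noted after Theorem~\ref{t-intro-1} and proved in \cite[p.~30]{WuYau2017}. Combining existence from the flow with this uniqueness gives the full conclusion. I expect the only delicate step to be the verification of hypothesis (ii) / the condition of Theorem~\ref{main}, i.e.\ ensuring the $\dbar$-derivative of the torsion of $h$ stays strictly below the negative holomorphic sectional curvature bound; the remaining steps are the routine translation of hypotheses and the standard unnormalized-to-normalized rescaling.
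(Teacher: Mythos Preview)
Your approach is exactly the one the paper takes: the paper states that Corollary~\ref{c-uni-equivalent} ``is an immediate consequence of Theorem~\ref{t-longtimeKRF} and Theorem~\ref{main}'' and gives no further argument, so your unpacking --- verify hypotheses (i)--(iv) of Theorem~\ref{t-longtimeKRF}, run the flow, then invoke Theorem~\ref{main} on the normalized flow --- is precisely the intended proof. Your identification of condition~(ii) (the inequality $H_h+\tfrac{2n}{n+1}|\wh\nabla_{\dbar}\wh T|_h\le -k<0$) as the only nontrivial verification is apt; indeed the corollary's stated hypotheses (bounded torsion and $H_h\le -k'$) do not by themselves control $|\wh\nabla_{\dbar}\wh T|_h$, and the paper's one-line proof does not address this either, so your caveat is a fair reading of a slight imprecision in the statement rather than a defect in your argument.
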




Let us prove Theorem \ref{main}. Here we do not have a good exhaustion function for $h$. However, the distance function $d(x,t)$ in a Ricci flow behaves well. Using the idea in the work of Chen \cite{Chen2009}, we have the following:

\begin{lma}\label{l-Chen}
Let $(M^m,g(t))$ be a complete noncompact  solution to the Ricci flow on $M\times[0,T]$ with $0<T<\infty,$  where $m\ge 2$ is the real dimension of $M$. Let $Q$ be a   smooth function so that
$$
\heat Q\le -\a Q^2+\beta
$$
for some $\a, \beta>0$ at the point where $Q>0$. Then
$$
tQ(x,t)\le \frac{1+\sqrt{1+4\a\beta T^2}}{2\a}.
$$
on $M\times(0,T]$.
\end{lma}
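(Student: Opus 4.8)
The plan is to run a localized maximum-principle argument adapted from Chen's work on short-time estimates for the Ricci flow \cite{Chen2009}, exploiting the fact that even without a good exhaustion function for $h$, the function $tQ$ can be bounded by testing against a cutoff built from the distance function of the evolving metric $g(t)$. First I would fix $T_0 \in (0,T]$ and consider the function $F = tQ$, which vanishes at $t=0$ and is bounded on compact space-time regions; the goal is to show $F \le \frac{1+\sqrt{1+4\a\beta T^2}}{2\a}$ on $M \times (0,T_0]$. If the supremum of $F$ over $M \times [0,T_0]$ is achieved (after noting $Q$ is bounded from above on this slab, or reducing to that case), at an interior maximum point $(x_0,t_0)$ with $t_0>0$ we would have $Q(x_0,t_0)>0$, $\partial_t F \ge 0$, $\Delta F \le 0$, hence
\begin{equation*}
0 \le \heat F = Q + t\heat Q \le Q + t(-\a Q^2 + \beta).
\end{equation*}
Writing $a = F(x_0,t_0) = t_0 Q(x_0,t_0)$ and multiplying by $t_0 > 0$, this gives $0 \le a - \a a^2 + \beta t_0^2 \le a - \a a^2 + \beta T^2$, so $\a a^2 - a - \beta T^2 \le 0$, which forces $a \le \frac{1+\sqrt{1+4\a\beta T^2}}{2\a}$ by the quadratic formula. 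This yields the claimed bound at the maximum point, hence everywhere.

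The main obstacle is that on a complete noncompact manifold the supremum of $F$ need not be attained, and there is no exhaustion function for $h$ with controlled derivatives available. The standard remedy, following Chen, is to localize using the distance function $r(x,t) = d_{g(t)}(x,x_0)$ of the \emph{evolving} metric. One takes a cutoff $\phi(r(x,t)/A)$ for a radius parameter $A$, applies the maximum principle to $\phi \cdot F$ (or $F - \e$-perturbations thereof) on a parabolic cylinder, and uses the Laplacian comparison together with the evolution inequality for $r(x,t)$ under Ricci flow — the key point being that $\heat r \ge -c(m)(\frac{1}{r} + \text{Ric-bound terms})$, which after integrating over time on a finite interval gives one-sided control of $(\partial_t - \Delta) r$ independent of any curvature bound on the region near $x_0$, up to constants depending on $m$ and $T$. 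Here I would invoke the now-standard form of this estimate (e.g. Perelman's/Chen's lemma: if $\Ric \le (m-1)K$ on $B_{g(t_0)}(x_0,r_0)$ then $\heat\, d(x,t) \ge -(m-1)(\frac{2}{3}Kr_0 + r_0^{-1})$ at points with $d(x,t) \ge r_0$, in the barrier sense). The contributions of the cutoff terms $-F\,\Delta\phi$ and $-2\la \nabla\phi, \nabla F\ra$ are then absorbed: they are bounded by $C(m,T)A^{-1}$ times lower-order quantities at the maximum of $\phi F$, and letting $A \to \infty$ recovers the global bound.

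Carrying this out cleanly, I would structure the argument as follows: (1) reduce to the case where $Q$ is bounded above on $M\times[0,T_0]$ by a preliminary barrier or by working with $\min\{Q, N\}$ and letting $N\to\infty$; (2) for parameters $A>1$ and $\e>0$, consider $G = \phi(r(x,t)/A)(tQ) - \e t$ on $M\times[0,T_0]$, which is bounded, nonpositive at $t=0$, and compactly supported in space for each $t$, so attains its maximum at some $(x_0',t_0')$; (3) at that point, if the max value is positive, run the computation above, picking up error terms from $\Delta\phi$ and $\nabla\phi\cdot\nabla(tQ)$ controlled via the distance-function evolution inequality and the property $|\phi'|^2/\phi \le C$, $\phi'' \ge -C$ of the cutoff; (4) conclude $t_0'Q(x_0',t_0') \le \frac{1+\sqrt{1+4\a\beta T^2}}{2\a} + C(m,T)A^{-1} + (\text{terms} \to 0)$; (5) let $A\to\infty$ then $\e\to0$. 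The delicate point worth care is the barrier/support-at-$x_0$ issue for $r(x,t)$ when $x$ is close to $x_0$ (where $\phi\equiv 1$ anyway, so $\nabla\phi = 0$ there and the singularity of the distance function is harmless), so the error terms are only active on the annulus where $r/A \in (1,2)$ and the comparison estimates apply.
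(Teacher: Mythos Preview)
Your proposal is correct and follows essentially the same approach as the paper: both localize via a cutoff built from the evolving distance function $d_{g(t)}(x,x_0)$, invoke the Perelman--Chen estimate $\heat d_t \ge -\frac{5(m-1)}{3}r_0^{-1}$ outside $B_t(x_0,r_0)$, apply the maximum principle to the compactly supported function (cutoff)$\,\cdot\, tQ$, derive the quadratic inequality $\a a^2 - a - \beta T^2 \le 0$ at the max with error $O(A^{-1})$, and let $A\to\infty$. The only cosmetic difference is that the paper builds the time-shift $\frac{5(m-1)t}{3r_0}$ directly into the cutoff argument $\varphi\bigl(\frac{1}{Ar_0}[d_t + \frac{5(m-1)t}{3r_0}]\bigr)$ so that the drift term vanishes identically, whereas you treat the $\heat r$ contribution as an absorbable error; and the paper dispenses with your preliminary reduction to bounded $Q$ and the $\e t$-perturbation since the cutoff already has compact spatial support.
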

\begin{proof} Let $x_0\in M$, and let $r_0>0$ be small enough so that:

$$\Ric(x,t)\leq (m-1)r_0^{-2}$$
for $x\in B_t(x_0,r_0)$, $t\in [0,T]$.  By \cite{Perelman} (see also \cite{Chen2009}), we then have
\begin{align}
\heat d_t(x,x_0) \geq -\frac{5(m-1)}{3} r_0^{-1}
\end{align}
whenever $d_t(x,x_0)\ge r_0$ in the sense of barrier, where $d_t(x,x_0)$ is the distance function from $x_0$ with respect to $g(t)$. In the followig, argue as in \cite{HuangTam2015}, we may assume that $d_t(x,x_0)$ to be smooth when applying maximum principle. We consider the function
$$u(x,t)=t\varphi\left(\frac1{Ar_0}\lf[ d_t(x,x_0)+\frac{5(m-1)t}{3r_0}  \ri] \right) Q(x,t) $$
 $A$ is sufficiently large so that $Ar_0>>\frac{5(m-1)T}{3r_0}  $,  and $\varphi$ is a fixed smooth nonnegative non-increasing function such that $\varphi\equiv 1$ on
$(-\infty, \frac{1}{2}]$, vanishes outside $[0,1]$ and satisfies $|2\frac{(\varphi')^2}{\varphi}|+|\varphi''|\leq c_1$ for some absolute constant.
 Note that $u$ also depends on $A$. However,
 $$
 u(x_0,t)=tQ(x_0,t).
 $$
 if $Ar_0\ge  \frac{10(m-1)T}{3r_0}.$

If $u\le 0$, then we are done.  Suppose the function $u>0$ somewhere, then there exists $(x_1,t_1)$ with $0<t_1\le T$ so that $u$ attains its maximum at $(x_1,t_1)$. We have at $(x_1,t_1)$ we have
 \bee
0\le  \heat u; \ \ \nabla Q=-\frac{\nabla \phi}{\phi}Q.
 \eee
 Suppose $d_{t_1}(x_1,x_0)<r_0$, then $u(x,t)=tQ(x,t)$ near $x_1,t_1$ provided $Ar_0$ is large enough. Then we have at $(x_1,t_1)$
 \bee
 \begin{split}
 0\le &\heat u\\
 =& t_1\heat Q+Q\\
 \le &-\a t_1Q^2+\beta t_1+Q
 \end{split}
\eee
and so
\bee
0\le- \a u^2+u+\beta T^2
\eee
which implies
\be\label{e-u-1}
u(x_0,t)\le u(x_1,t_1)\le \frac{1+\sqrt{1+4\a\beta T^2}}{2\a}.
\ee
for $t\in [0,T]$.

Suppose $d_{t_1}(x_1,x_0)\ge r_0$, then  at $(x_1,t_1)$,

\begin{equation*}
\begin{split}
0&\leq   \heat u\\
&=Qt \heat \varphi+\varphi \heat (Qt)  -2t\langle \nabla \varphi,\nabla Q\rangle\\
&\leq Qt \varphi'\frac{1}{Ar_0} \left[ \heat d_t(x,p)+\frac{5}{3} (m-1) r_0^{-1} \right]\\
&\quad  +|\varphi''|\frac{1}{(Ar_0)^2} tQ+\varphi \left( -\a t Q^2+\beta t+Q\right) +2tQ\frac{1}{(Ar_0)^2}\cdot \frac{(\phi')^2}{\phi} \\
&\leq   -\a t\varphi Q^2+\varphi Q+ \beta t\varphi  +c_1Qt\frac{1}{(Ar_0)^2}.
\end{split}
\end{equation*}
Multiply both the inequality by $t\phi=t_1\phi$, we have
$$
0\le -\a u^2+\lf(1+\frac{c_1T}{Ar_0^2}\ri)u+\beta T^2.
$$
Hence we have
\be\label{e-u-2}
u(x_0,t)\le u(x_1,t_1)\le \frac{1+\frac{c_2T}{Ar_0^2}+\sqrt{\lf(1+\frac{c_1T}{Ar_0^2}\ri)^2+4\a\beta T^2}}{2\a}.
\ee
for $0<t\le T$. Let $A\to\infty$ together with \eqref{e-u-2} and the fact that $x_0$ is any point in $M$, we conclude the lemma is true.
\end{proof}

{As an application of the lemma, we prove   that complete noncompact  \K Einstein metrics   with negative scalar curvature  is unique up to scaling. Here we do not assume the curvature is bounded, see also \cite{WuYau2017}. Namely, we have the following:
Moreover, the \K Einstein metric with negative scalar curvature must be unique up to scaling.
\begin{thm}
Suppose $\omega_1$ and $\omega_2$ are complete noncompact \K Einstein metrics on $M$ with
$\Ric(\omega_i)=-\omega_i$ for $i=1,2$. Then $\omega_1=\omega_2$ on $M$.
\end{thm}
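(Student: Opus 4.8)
The plan is to use the Chern-Ricci flow / Kähler-Ricci flow machinery already developed, together with Lemma \ref{l-Chen}, to compare the two Einstein metrics. The strategy is to run the normalized Kähler-Ricci flow with $\omega_1$ as the initial metric; since $\omega_1$ is already Kähler-Einstein with $\Ric(\omega_1)=-\omega_1$, the flow is stationary: $g(t)\equiv\omega_1$ for all $t\ge0$. Now I would also consider the \emph{unnormalized} Ricci flow (or its Kähler version), for which an Einstein metric with $\Ric=-\omega$ evolves by simple rescaling $\omega(t)=(1+t)\omega$. The key point is that both $\omega_1$ and $\omega_2$ give explicit solutions of the same flow equation, and one wants a quantity measuring their discrepancy to which Lemma \ref{l-Chen} applies.

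The concrete device is the trace quantity. Fix the static background metric, say $h=\omega_1$ (which is now genuinely Kähler, so all torsion terms in Lemma \ref{l-tracee} vanish), and let $g(t)=(1+t)\omega_2$ be the rescaled solution of the unnormalized Kähler-Ricci flow $\partial_t g=-\Ric(g)$; note $\Ric(g(t))=\Ric(\omega_2)=-\omega_2$, so $\partial_t g = \omega_2 = (1+t)^{-1}g(t)$, consistent with $g(t)=(1+t)\omega_2$. Set $\Lambda=\tr_{g(t)}\omega_1$. Since both metrics are Kähler-Einstein, the evolution equation \eqref{l-trace} simplifies dramatically: $(I)=(II)=0$ and $(III)=g^{i\bar j}g^{p\bar q}R_{p\bar q i\bar j}(\omega_1)$. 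Using Royden's Lemma \ref{royden} with $h=\omega_1$, whose holomorphic sectional curvature satisfies a bound coming from $\Ric(\omega_1)=-\omega_1$ (a Kähler-Einstein metric with negative Einstein constant has holomorphic sectional curvature bounded above, after tracing, in terms of the Einstein constant and the norm of the traceless bisectional curvature), one controls $(III)$ by a quadratic expression in $\Lambda$. The upshot is a differential inequality of the form $\heat \Lambda \le -a\Lambda^2 + b\Lambda$ or, after adjusting, $\heat(\text{something}) \le -\alpha Q^2 + \beta$, to which Lemma \ref{l-Chen} applies on each finite interval $[0,T]$, yielding $tQ(x,t)\le C$ independent of $x$; letting $T\to\infty$ forces the relevant quantity to vanish, giving $\omega_1\le\omega_2$ pointwise. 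By symmetry (swapping the roles), $\omega_2\le\omega_1$, hence $\omega_1=\omega_2$.

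The cleanest realization probably avoids Royden entirely: for two Kähler-Einstein metrics with the same Einstein constant $-1$, the standard Aubin-Yau second-order estimate shows that $u=\tr_{\omega_2}\omega_1$ satisfies $\Delta_{\omega_2}\log u \ge -\,(\text{bisectional curvature of }\omega_1\text{ contracted})\, + \tr_{\omega_2}\Ric(\omega_2)\cdot(\dots)$, and here the Einstein condition on $\omega_1$ gives a bound. Running the flow merely supplies, via Lemma \ref{l-Chen}, the missing completeness/maximum-principle input that one cannot get directly because no exhaustion function with bounded Hessian is assumed. So I would: (1) set up $g(t)=(1+t)\omega_2$ as an explicit solution of $\partial_t g=-\Ric(g)$; (2) compute $\heat \Lambda$ for $\Lambda=\tr_{g(t)}\omega_1$ using the Kähler specialization of Lemma \ref{l-tracee}, getting $\heat\Lambda \le \Lambda^{-1}g^{i\bar j}g^{p\bar q}R_{p\bar q i\bar j}(\omega_1)$; (3) bound the curvature term by $-\tfrac1n \Lambda^2 + (\text{const})\Lambda$ using that $\Ric(\omega_1)=-\omega_1$ together with Royden's inequality applied to $\omega_1$; (4) derive $\heat Q \le -\alpha Q^2 + \beta$ for a suitable $Q$ built from $\Lambda$; (5) invoke Lemma \ref{l-Chen}, let $T\to\infty$, conclude $\omega_1\le\omega_2$; (6) symmetrize.

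The main obstacle I anticipate is step (3): extracting a \emph{clean} bound of the form $g^{i\bar j}g^{p\bar q}R_{p\bar q i\bar j}(\omega_1)\le -c\,\Lambda^2 + C\Lambda$ purely from $\Ric(\omega_1)=-\omega_1$, since Royden's lemma produces the term $g^{i\bar j}g^{k\bar l}h_{k\bar j}h_{i\bar l}$ which is not obviously dominated by $\Lambda^2$ in the wrong direction — one needs the reverse Cauchy-Schwarz $g^{i\bar j}g^{k\bar l}h_{k\bar j}h_{i\bar l}\le \Lambda^2$ (true) paired with a \emph{negative} coefficient, so one must check the sign of $\kappa_0$ carefully. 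Here $h=\omega_1$ is Kähler-Einstein with negative scalar curvature, so its holomorphic sectional curvature need not be negative pointwise, but the relevant traced quantity is controlled by the Einstein constant; the technical heart is verifying that hypothesis \eqref{Hneg}-type bound, or an averaged substitute, holds for a Kähler-Einstein metric and produces the right-signed quadratic term. Once that sign is pinned down, the rest is a direct application of the flow estimates already in hand.
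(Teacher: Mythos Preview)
Your approach has a genuine gap at step (3), and you have correctly identified it yourself: the K\"ahler--Einstein condition $\Ric(\omega_1)=-\omega_1$ does \emph{not} yield any pointwise upper bound on the holomorphic sectional curvature of $\omega_1$. The Einstein condition is a single trace of the curvature tensor with respect to $\omega_1$ itself, i.e.\ $h^{k\bar l}\hat R_{i\bar j k\bar l}=-h_{i\bar j}$, whereas the term $(III)=g^{i\bar j}g^{p\bar q}\hat R_{p\bar q i\bar j}$ in Lemma~\ref{l-tracee} contracts twice against $g^{-1}$. Without a pointwise bound on the bisectional (or holomorphic sectional) curvature of $\omega_1$, Royden's inequality in Lemma~\ref{royden} gives you nothing: the constant $\kappa_0$ may be $+\infty$. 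Your hope that ``the relevant traced quantity is controlled by the Einstein constant'' cannot be realized; the traceless part of the bisectional curvature enters with no a priori control, and can be arbitrarily large.

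The paper circumvents this entirely by working not with the trace $\Lambda=\tr_{g}\omega_1$ but with the \emph{volume ratio}
\[
F=\frac1n\log\frac{\omega_2^n}{\omega_1^n},
\]
viewed as a time-independent function along the flow $\wt\omega_1(t)=(t+1)\omega_1$. The key point is that for K\"ahler--Einstein metrics one has $\ii\ddbar F=\frac1n\bigl(\Ric(\omega_1)-\Ric(\omega_2)\bigr)=\frac1n(\omega_2-\omega_1)$, so $\Delta_{\omega_1}F=\frac1n\tr_{\omega_1}\omega_2-1$ requires \emph{no} curvature information at all, only the Einstein constants. Combined with the AM--GM inequality $\frac1n\tr_{\omega_1}\omega_2\ge e^{F}$ and the elementary bound $e^F-1\ge \tfrac12 F^2$ for $F>0$, this gives $\heat F\le -\tfrac14 F^2$ on $M\times[0,1]$ where $F>0$, exactly the form required by Lemma~\ref{l-Chen}. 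This shows $F$ is bounded above, and by symmetry bounded; then the Cheng--Yau generalized maximum principle (which needs only that $F$ be bounded, not that curvature be bounded) applied to the elliptic inequality $-\Delta_{\omega_1}F\le 1-e^F$ forces $F\le 0$, and symmetry again gives $F\equiv 0$, hence $\omega_1=\omega_2$. The moral is that the volume ratio, unlike the trace, is tailor-made for the Einstein condition and completely avoids the bisectional curvature term that blocks your argument.
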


\begin{proof} Let 
Let $\wt\omega_1(t)=(t+1)\omega_1$ and $\wt\omega_2(t)= (t+1)\omega_2$. Then both $\wt\omega_1, \wt\omega_2$ are solutions to the  \KR flow on $M\times [0,+\infty)$. Define $F(x,t)=F(x)$ to be the function
$$F(x,t)=\log \left[\frac{\wt\omega_2^n}{\wt\omega_1^n}\right]^\frac{1}{n}=\log \left[\frac{ \omega_2^n}{\omega_1^n}\right]^\frac{1}{n} $$
which is independent of $t$.
The function $F$ is independent of $t>0$ but we treat it as a function over the \KR flow. Let $\Delta$ be the Laplacian of $\wt\omega_1$. Then on $M\times(0,\infty)$,  On $[0,1]$, it satisfies
\begin{align*}
\heat F&=\frac{1}{t+1}\left(1-\frac{1}{n}\tr_{\omega_1}\omega_2\right)\\
&\leq \frac{1}{t+1}\left(1-e^F\right)\\
&\leq - \frac{1}{4}F^2
\end{align*}
whenever $F>0$ on $M\times[0,1]$.  

Apply Lemma \ref{l-Chen} on $M\times [0,1]$, $tF \leq 8$. In particular, $F(x)$ is bounded from above uniformly on $M$. By interchanging $\omega_1$ and $\omega_2$, we conclude that $F$ is a bounded function on $M$. Let $\Delta_1$ be the Laplacian of $\omega_1$, we have as above
$$
-\Delta_1F\le 1-e^F.
$$
By the generalized maximum principle \cite{ChengYau}, we conclude that $F\le0$. Interchanging the roles of   $\omega_1$ and   $ \omega_2$, we can prove similarly that $F\ge0$. Hence $F=0$. So $\ddbar F=0$ and $\omega_1=\omega_2$ because they are K\"ahler-Einstein.

\end{proof}

}

Now we are ready to prove Theorem \ref{main}.

\begin{proof}[Proof of Theorem \ref{main}]
Let $g(t)$ and $h$ as in the Theorem. Let $\Lambda=\tr_gh$. By Corollary \ref{c-trace}, we have
$$
\heat \Lambda\le -c_1k\Lambda^2,
$$
for some $c_1$ depending only on $n$. By Lemma \ref{l-Chen} with $\beta=0$, we conclude that
\be\label{e-KE-1}
\Lambda(x,t)\le \frac1{2c_1kt}
\ee
on $M\times(0,\infty)$.

On the other hand, let $R(x,t)$ be the scalar curvature of $g(t)$ at $x$ and let $R_-(x,t)$ be its negative part. For any $\e>0$, let $f=\frac12\lf((R^2+\e^2)^\frac12-R\ri)$. Note that if $\e\to0$, then $f\to  R_-$. Using the fact that
\bee
\heat R\ge \frac1n R^2,
\eee
 direct computations show that
\bee
\heat f\le -\frac1nf(f-2c_1\e) \le \frac1n(f-c_1\e)^2+c_2\e
\eee
for some absolute constant $c_1>0$ and $c_2>0$ depending only on $n$. By Lemma \ref{l-Chen}, we conclude that
\bee
t(f-c_1\e)\le \frac n2\lf(1+\sqrt{1+\frac{4c_2\e}n}\ri).
\eee
on $M\times(0,\infty)$. Let $\e\to0$, we conclude that
\be\label{e-KE-2}
tR(x,t)\ge -n.
\ee
Since
$$
\frac{\p }{\p t}\log\lf(\frac{\det g(t)}{\det h}\ri)=-R\le \frac nt
$$
we conclude for any bounded open  set $\Omega$, there is a constant $C_1$ depending only on $\Omega, g(1), h, k, n$. such that
$$
 \frac{\det g(t)}{\det h} \le C_1t^n
$$
on $\Omega\times[1,\infty)$. Combining this with \eqref{e-KE-1}, we conclude that
$$
C_2^{-1}th\le g\le C_2th
$$
on $\Omega\times[1,\infty)$ for some constant $C_2>0$ depending only on $\Omega, g(1), h, k, n$.

Consider the normalized metric
$$
\wt g(x,s)=e^{-s}g(x,e^s).
$$
Then we have
\be\label{e-NKRF}
\frac{\p}{\p s}\wt g =-\Ric(\wt g)-\wt g
\ee
on $M\times[0,\infty)$, and
\be\label{e-KE-3}
C_2^{-1}h\le \wt g(s)\le C_2 h
\ee
on $\Omega\times[0,\infty)$.

In the following, let $\omega(t)$, $\wt\omega(s)$ be the \K forms of $g(t)$, $\wt g(s)$ respectively. By \cite[Theorem 2.17]{SongWeinkove2013}, we conclude that for any bounded open set in $M$ and $\ell\ge0$,  there is a constant $C_3$ depending only on $\Omega, g(1), h, k, n$. and $\ell$ such that
\be\label{e-KE-4}
||\wt\omega(s)||_{C^{\ell}(\Omega, \wt g_0)}\le C_3.
\ee

On the other hand, let
\bee
\wt\phi(x,s)=e^{-s}\int_0^s e^\tau\log\lf(\frac{ (\wt\omega(\tau))^n}{(\wt\omega(0))^n}\ri)d\tau.
\eee
Then
\be\label{e-tphi-1}
\wt\omega(s)=e^{-s}\wt\omega(0)-(1-e^{-s})\Ric(\wt\omega(0))+\ii\ddbar\wt \phi(s).
\ee
Moreover,
\be\label{e-tphi-2}
\left\{
  \begin{array}{ll}
   \frac{\p}{\p s} \wt\phi =  &\log\lf(\frac{ \wt\omega^n}{(\wt\omega(0))^n}\ri)-\wt\phi \hbox{\ \ in $M\times[0,\infty)$;} \\
    \wt\phi(0)=   &0 \hbox{\ \ on $M$.}
  \end{array}
\right.
\ee
Denote $\p_s\wt\phi$ by $\wt \phi'$ etc., and let $\wt R$ be the scalar curvature of $\wt g$, then
\be\label{e-tphi-3}
\begin{split}
\wt\phi''+\wt\phi'=&-\wt R-n\\
=&-e^s R(g(e^s))-n\\
=&-e^s\lf(R(g(e^s))+e^{-s}n\ri)\\
\le &0
\end{split}
\ee
by \eqref{e-KE-2}.
Hence $\wt\phi'+\wt\phi$ is non-increasing and $\wt\phi'+\wt\phi\le0$  because $\wt\phi'+\wt\phi=0$ at $s=0$. On the other hand, by \eqref{e-KE-4} and \eqref{e-tphi-2}, we conclude that for any bounded open set $\Omega$, there exists $s_i\to\infty$ such that
$$
(\wt\phi'+\wt\phi)(s_i)
$$
converge uniformly in $C^\infty$ norm in $\Omega$. By the monotonicity of $\wt\phi'+\wt\phi$, we conclude that $\wt\phi'+\wt\phi$ converges in $C^\infty$ norm in $\Omega$ to some function.

By \eqref{e-tphi-3}, we have
$$
(e^s\wt\phi)'\le 0,
$$
and so $\wt\phi'\le0$ because $\phi'=0$ at $s=0$. Combine this with \eqref{e-KE-4} and \eqref{e-tphi-1}, we conclude that $\wt\phi$ also converge in $C^\infty$ norm to some function $\wt\phi_\infty$. Hence $\wt\phi'$ also converge in $C^\infty$ norm to some function. However, by \eqref{e-KE-3} we conclude that $\phi$ is bounded from below. This implies that $\wt\phi'\to0$ as $s\to\infty$. Moreover, $\wt \omega(s)\to \wt \omega_\infty$ in $C^\infty$ norm in $\Omega$ as $s\to\infty$ with
$$
\wt\omega_\infty=-\Ric(\wt\omega(0))+\ii\ddbar \wt\phi_\infty.
$$
Note that $\wt\omega_\infty$ is a \K form of a \K metric by \eqref{e-KE-3}. Moreover,
$$
\wt\phi_\infty=\log\lf(\frac{ \wt\omega^n}{(\wt\omega(0))^n}\ri).
$$
Taking $\ddbar$ to both sides, we conclude that
$$
\Ric(\wt\omega_\infty)=-\wt\omega_\infty.
$$

 Suppose $\bar \omega$ is a Ricci flat metric compatible with the same complex structure of $h$. Then $ \omega(t)=\bar\omega$ is a steady solution of the \KR flow. By the convergence of normalized \KR flow, $t^{-1}\omega(t)$ converges to a K\"ahler Einstein metric on $M$ which is impossible since $t^{-1}\omega(t)\equiv t^{-1}\bar\omega$ converges to a zero tensor on $M$. This completes the proof.
\end{proof}

\end{document}